\newif\ifalleinez
\begin{document}
\def\Volume{224}
\def\Jahrgang{2013}
\month 12 \year 2013
\setcounter{page}{1}


 \def\issntext{}

\fi

 \def\boldx{{\mathord{\text{\sl\bfseries x}}}}
 \def\boldz{{\mathord{\text{\sl\bfseries z}}}}
 \def\boldPhi{\Phi}
 \def\boldinfty{\mathord{\text{\raise-.5ex\hbox{\LARGE${\infty}$}}}}

\english

\hyphenation{lo-kal-kom-pak-ter Dies-tel La-place Woess Lip-schitz}  

\numberwithin{equation}{section}

\newtheoremstyle{mythm}
  {9pt}
  {9pt}
  {\itshape}
  {0pt}
  {\bfseries}
  {}
  { }
  {\thmnumber{(#2)}\thmname{ #1}\thmnote{ #3}}

\newtheoremstyle{mydef}
  {9pt}
  {9pt}
  {\normalfont}
  {0pt}
  {\bfseries}
  {}
  { }
  {\thmnumber{(#2)}\thmname{ #1}\thmnote{ #3}}

\theoremstyle{mythm}
\newtheorem{thm}[equation]{Theorem.}
\newtheorem{pro}[equation]{Proposition.}
\newtheorem{lem}[equation]{Lemma.}
\newtheorem{cor}[equation]{Corollary.}
\newtheorem{clm}[equation]{Claim.}
\newtheorem{crt}[equation]{Criterion.}
\newtheorem{ques}[equation]{Question.}
\newtheorem{fct}[equation]{Facts.}

\def\Im{\mathop{\rm Im}\,}

\newtheorem{dfn}[equation]{Definition.}
\theoremstyle{mydef}
\newtheorem{rmk}[equation]{Remark.}

 \Artikel
	What is a horocyclic product, and \neuezeile
	how is it related to lamplighters?\neuezeile ~;
	Wolfgang Woess;
	TU Graz;

\insert\footins{\footnotesize
Supported by FWF (Austrian Science Fund) projects W1230-N13 and P24028-N18}

\markboth{{\sf W. Woess}}
{{\sf Horocyclic products and lamplighters}}
\baselineskip 15pt

{\it This is a rather personal introductory outline of an interesting class
of geometric, resp. graph- \& group-theoretical structures. After an 
introductive section about their genesis, the general construction of 
horocyclic products is presented. Three closely related basic structures of
this type are explained in more detail: Diestel-Leader graphs, treebolic spaces, 
and Sol-groups, resp. -manifolds. Emphasis is on their geometry, isometry
groups, quasi-isometry classification and boundary at infinity. Subsequently,
it is clarified under which parametrisation 
they admit discrete groups of isometries acting with
compact quotient. Finally, further develpoments are reviewed briefly.}

\section{A problem on infinite graphs}\label{sec:intro}

In the mid-1980ies, in conversations with my colleagues at Leoben,
I repeatedly asked the following question:

\begin{quote}
\emph{Are there any
vertex\dash transitive graphs that do not look like Cayley graphs?}
\end{quote}

The drawback was that I didn't see how to define ``look like''
rigorously. My eyes were opened when I encountered {\sc Gromov}'s 
definition of \emph{quasi\dash isometry} in \cite{Gr1}, resp.\ (more clearly)
\cite[7.2.G]{Gr2}. 

Before proceeding, we should clarify the involved notions and start a preliminary
discussion.
A \emph{graph} will be written in terms of its vertex set $X$, which carries a
symmetric neighbourhood relation $\sim$. Thus, the edges are pairs
$[x,y] = [y,x]$, where $x \sim y$, so that we allow loops $[x,x]$, but no
multiple edges. Usually, our graphs will be \emph{infinite}. 
The \emph{degree} $\deg(x)$ of $x \in X$ is the number of 
neighbours. Everbody is familiar with the concept of a \emph{path}
$[x_0, x_1\,, \dots, x_n]$ in a graph: one has to have  $x_k \sim x_{k-1}$,
and the length of a path is its number of edges (here: $n$). 

All our graphs 
will be \emph{connected} (for all $x, y \in X$ there is a path starting at $x$
and ending at $y$) and \emph{locally finite} ($\deg(x) < \infty$ for every $x$).
Being connected, $X$ becomes a metric space, where the graph distance $d(x,y)$ 
is the minimal length of a path from $x$ to~$y$. 

An \emph{automorphism} $X$ is a self\dash isometry of $(X,d)$. We write 
$\mathsf{Aut}(X)$ for the group of all automorphisms of $X$. The graph is called
\emph{vertex\dash transitive} if for every $x, y \in X$ there is $g \in \mathsf{Aut}(X)$
such that $gx=y$. A large class of such graphs is provided by groups:
given a finitely generated group $G$ (usually written multiplicatively)
and a finite, symmetric set $S$ of generators, we can visualise $G$ by its
\emph{Cayley graph} $X(G,S)$. Its vertex set is $X=G$, and $x \sim y$ if 
$y=xs$ for some $s \in S$ (so that $y = xs^{-1}$). The group acts by automorphisms
on $X(G,S)$ via $(g,x) \mapsto gx$. The most typical examples are

\begin{enumerate} \itemsep-\parsep
\item The Cayley graph of the additive group $\mathbb{Z}^2$ with respect to
$S = \{ (\pm 1,0)$, $(0,\pm 1)$ -- this is the square lattice;
\item The Cayley graph of the free group $\mathbb{F}_2$ on two free generators
$a$, $b$ with respect to $S = \{a^{\pm 1}, b^{\pm 1}\}$ -- this is the homogeneous
tree with degree $4$.
\end{enumerate}
See Figure~1. 
Furthermore, the homogeneous tree with arbitrary degree $p+1$ is also the
Cayley graph of the group $\langle a_1\,, \dots a_{p+1} : a_i^2 = 1_G \rangle$. 

\begin{figure}[h]
\hfill
\beginpicture  

\setcoordinatesystem units <.6mm,.6mm> 

\setplotarea x from -120 to 60, y from -30 to 28

\putrule from  -66 -20 to -110 -20
\putrule from  -66 -10 to -110 -10
\putrule from  -66 0 to -110 0
\putrule from  -66 10 to -110 10
\putrule from  -66 20 to -110 20
\putrule from  -68 -22 to -68 22
\putrule from  -78 -22 to -78 22
\putrule from  -88 -22 to -88 22
\putrule from  -98 -22 to -98 22
\putrule from  -108 -22 to -108 22

\put {$\mathbb{Z}^2$} [r] at -116 0

\putrule from  -4 24 to 4 24
\putrule from  -12 16 to 12 16
\putrule from  -20 8 to -12 8    \putrule from  20 8 to 12 8
\putrule from  -28 0 to 28 0
\putrule from  -20 -8 to -12 -8    \putrule from  20 -8 to 12 -8
\putrule from  -12 -16 to 12 -16
\putrule from  -4 -24 to 4 -24
\putrule from  24 -4 to 24 4
\putrule from  16 -12 to 16 12
\putrule from  8 -20 to 8 -12    \putrule from  8 20 to 8 12
\putrule from  0 -28 to 0 28
\putrule from  -8 -20 to -8 -12    \putrule from  -8 20 to -8 12
\putrule from  -16 -12 to -16 12
\putrule from  -24 -4 to -24 4

\put {$\mathbb{F}_2$} [l] at 34 0
\put {Figure~1} at 60 -20
\endpicture
\end{figure}

There are vertex\dash transitive graphs which are not Cayley graphs. 
A finite example is the well\dash known Petersen graph. From here one can of course construct 
infinite examples (e.g. the Cartesian product of the Petersen graph with the
bi\dash infinite line). But there also are intrinsically infinite examples of 
non\dash Cayley vertex\dash transitive graphs. One of them is based on the following way
of looking at trees, which will play an important role later on:
take the homogeneous tree $\mathbb{T}_p$ with degree $p+1$, but draw it differently,
such that it ``hangs down'' from a point $\varpi$ at infinity. See Figure 2, where $p=2$.
That is, the tree is considered as
the union of generations (horizontal layers) -- called \emph{horocycles} -- 
$H_k\,$, $k\in \mathbb{Z}$. Each $H_k$ is infinite, every vertex $x \in H_k$ has a unique 
neighbour in $H_{k-1}\,$, its \emph{predecessor} $x^-$, and $p$ neighbours in 
$H_{k+1}\,$, its \emph{successors}. Thus, $p$ is the \emph{branching number} of 
$\mathbb{T}$. For $x \in \mathbb{T}$, we write $\mathfrak{h}(x) = k$
if $x \in H_k\,$, the \emph{Busemann function.} An ancestor of $x$ is an iterated predecessor.
Any pair of vertices $x, y$ has a common ancestor $v$ for which $\mathfrak{h}(v)$
is maximal. We write $v=x \curlywedge y$. Also, we choose a root vertex $o$ in $H_0\,$.

\begin{figure}[h]
\hfill\beginpicture 

\setcoordinatesystem units <.55mm,.9mm> 

\setplotarea x from -10 to 104, y from 2 to 80

\arrow <6pt> [.2,.67] from 2 2 to 80 80

\plot 32 32 62 2 /

 \plot 16 16 30 2 /

 \plot 48 16 34 2 /

 \plot 8 8 14 2 /

 \plot 24 8 18 2 /

 \plot 40 8 46 2 /

 \plot 56 8 50 2 /

 \plot 4 4 6 2 /

 \plot 12 4 10 2 /

 \plot 20 4 22 2 /

 \plot 28 4 26 2 /

 \plot 36 4 38 2 /

 \plot 44 4 42 2 /

 \plot 52 4 54 2 /

 \plot 60 4 58 2 /

 \plot 99 29 64 64 /

 \plot 66 2 96 32 /

 \plot 70 2 68 4 /

 \plot 74 2 76 4 /

 \plot 78 2 72 8 /

 \plot 82 2 88 8 /

 \plot 86 2 84 4 /

 \plot 90 2 92 4 /

 \plot 94 2 80 16 /


\setdots <3pt>
\putrule from -4.8 4 to 102 4
\putrule from -4.1 8 to 102 8
\putrule from -2 16 to 102 16
\putrule from -1.7 32 to 102 32
\putrule from -1.7 64 to 102 64

\put {$\vdots$} at 32 -3
\put {$\vdots$} at 64 -3

\put {$\dots$} [l] at 103 6
\put {$\dots$} [l] at 103 48

\put {$H_{-3}$} [l] at -20 64
\put {$H_{-2}$} [l] at -20 32
\put {$H_{-1}$} [l] at -20 16
\put {$H_0$} [l] at -20 8
\put {$H_1$} [l] at -20 4
\put {$\vdots$} at -10 -3
\put {$\vdots$} [B] at -10 70
\put {$\varpi$} at 82 82

\put {$o$} at 7 10.5
\put {$x$} at 57.8 10.5
\put {$x^-$} at 53 19
\put {$y$} at 99 34.5
\put {$x \!\curlywedge\! y$} [r] at 63 66.5

\put {$\scriptstyle \bullet$} at 8 8
\put {$\scriptstyle \bullet$} at 56 8
\put {$\scriptstyle \bullet$} at 48 16
\put {$\scriptstyle \bullet$} at 96 32
\put {$\scriptstyle \bullet$} at 64 64

\put {Figure~2} at 160 0
\endpicture
\end{figure}

Consider the group $\mathsf{Aff}(\mathbb{T})$ of all automorphisms $g$ of 
$\mathbb{T}$ which preserve the predecessor relation: $g(x^-) = (gx)^-$ for all $x$. 
It acts transitively. It is called the affine group of the tree because it contains
the group of all affine mappings $\xi \mapsto \alpha\xi + \beta$ of the 
ring $\mathbb{Q}_p$ of $p$-adic numbers (field, if $p$ is prime), where 
$\xi, \alpha, \beta \in \mathbb{Q}_p$ and $\alpha$ is invertible,
see {\sc Cartwright, Kaimanovich and Woess}~\cite[\S 4]{CKW}. In particular, 
$\mathbb{Q}_p$ can be identified with the lower boundary $\partial^*\mathbb{T}_p\,$ 
that we are going to describe further below.  

Next, we introduce the additional edges $[x, (x^-)^-]$ for all $x$, 
see Figure~3. The resulting graph is sometimes called the \emph{grandmother graph}, which 
is suggestive when one thinks of $\mathbb{T}$ as an infinite genealogical tree.

\begin{figure}[h]
\hfill\beginpicture 

\setcoordinatesystem units <3mm,3mm> 

\setplotarea x from 4 to 12, y from 3 to 14

\plot 4 4  8 8  12 4  /

\plot 8 8  8 14 /

\setdashes <3pt>
 
\plot 4 4  8 14  12 4 /

\put {Figure~3} at 30 4
\endpicture
\end{figure}

The point is that $\mathsf{Aff}(\mathbb{T})$ now becomes the full automorphism
group of the grandmother graph.

\begin{clm}\label{claim:granma}
The grandmother graph is vertex\dash transitive, but not a Cayley graph of some finitely
generated group.
\end{clm}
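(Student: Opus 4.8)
The plan is to prove the two assertions separately, leaning on the structural fact stated just above the claim, namely that the full automorphism group of the grandmother graph coincides with $\mathsf{Aff}(\mathbb{T})$.

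\emph{Vertex-transitivity.} First I would check that every $g \in \mathsf{Aff}(\mathbb{T})$ really is an automorphism of the grandmother graph. It preserves the original tree-edges by definition, and from $g(x^-)=(gx)^-$ one obtains $g\bigl((x^-)^-\bigr)=\bigl((gx)^-\bigr)^-$, so $g$ carries each grandmother-edge $[x,(x^-)^-]$ to the grandmother-edge $[gx,\,((gx)^-)^-]$. Since $\mathsf{Aff}(\mathbb{T})$ was noted to act transitively on the vertex set $\mathbb{T}$, the grandmother graph is vertex-transitive.

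\emph{Not a Cayley graph.} Here I would invoke Sabidussi's criterion: a connected graph $X$ is a Cayley graph of some finitely generated group if and only if $\mathsf{Aut}(X)$ contains a subgroup acting \emph{regularly} (i.e.\ simply transitively) on $X$. Thus it suffices to show that $\mathsf{Aff}(\mathbb{T})$ has no regular subgroup. The key tool is the \emph{level homomorphism}: for $g \in \mathsf{Aff}(\mathbb{T})$ the quantity $c(g) = \mathfrak{h}(gx) - \mathfrak{h}(x)$ is independent of $x$ (passing to a predecessor lowers both $\mathfrak{h}(gx)$ and $\mathfrak{h}(x)$ by $1$, and the tree is connected by predecessor/successor steps), and $c\colon \mathsf{Aff}(\mathbb{T}) \to \mathbb{Z}$ is a group homomorphism. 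Now suppose $\Gamma \le \mathsf{Aff}(\mathbb{T})$ acts regularly, and set $\Gamma_0 = \Gamma \cap \ker c$. For any $x,y$ in a fixed horocycle $H_k$, the unique $\gamma \in \Gamma$ with $\gamma x=y$ satisfies $c(\gamma)=k-k=0$, hence $\gamma \in \Gamma_0$; together with freeness of the regular action this shows that $\Gamma_0$ acts simply transitively on each horocycle $H_k$.

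The contradiction then comes from the predecessor map $\pi\colon H_0 \to H_{-1}$, $\pi(x)=x^-$. Because $\Gamma_0 \subseteq \mathsf{Aff}(\mathbb{T})$, this map is $\Gamma_0$-equivariant, $\pi(\gamma x) = (\gamma x)^- = \gamma(x^-) = \gamma\,\pi(x)$. But any equivariant map between two simply transitive $\Gamma_0$-sets must be a bijection: writing $x=\gamma o$ for the root $o \in H_0$ gives $\pi(x)=\gamma\,\pi(o)$, and $\gamma \mapsto \gamma\,\pi(o)$ is a bijection $\Gamma_0 \to H_{-1}$. This forces $\pi$ to be one-to-one, whereas $\pi$ is $p$-to-one with $p \ge 2$ — a contradiction. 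Hence $\mathsf{Aff}(\mathbb{T})$ has no regular subgroup, and the grandmother graph is not a Cayley graph. The genuine obstacle is concentrated in the quoted identification $\mathsf{Aut}=\mathsf{Aff}(\mathbb{T})$: one must know that no automorphism can displace the distinguished end $\varpi$ or violate the predecessor relation, which is precisely what rigidifies the horocyclic structure enough to make $c$ well defined. Granting that, the core of the proof is the one-line equivariance observation that a $\Gamma_0$-map between regular $\Gamma_0$-sets cannot be $p$-to-one, with everything else being bookkeeping around $c$ and simple transitivity.
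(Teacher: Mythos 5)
Your proof is correct, but its second (non\dash Cayley) half takes a genuinely different route from the paper's. The paper's own argument is three lines long: if a group $G$ acts transitively on the grandmother graph, then $G \le \mathsf{Aff}(\mathbb{T})$; picking a vertex $x$ and two of its successors $y,z$, transitivity gives $g \in G$ with $gy=z$, and predecessor\dash preservation forces $gx=(gy)^-=z^-=x$, so the stabiliser $G_x$ is non\dash trivial and Criterion~\ref{crit:Cayley} applies. You instead introduce the level homomorphism $c$ (this is the paper's $\Phi$ of \eqref{eq:Phi}, which the paper only brings in later, for the DL\dash graphs), intersect a hypothetical regular subgroup $\Gamma$ with $\ker c$, show that $\Gamma_0=\Gamma\cap\ker c$ acts simply transitively on every horocycle, and derive the contradiction from the $\Gamma_0$\dash equivariance of the $p$\dash to\dash one predecessor map $\pi\colon H_0\to H_{-1}$. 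This is sound; in particular you rightly avoid the cardinality trap (all horocycles are countably infinite) by letting equivariance, not counting, force $\pi$ to be a bijection. What the paper's route buys is economy: no homomorphism, no reduction to $\ker c$, just the observation that any automorphism moving one successor of $x$ to another must fix $x$. What your route buys is that it is precisely the pattern the paper deploys afterwards: the proof of Lemma~\ref{lem:DLnoC} (that $\mathsf{DL}(p,q)$ is not a Cayley graph for $p\ne q$) likewise restricts to level\dash preserving elements and contradicts a branching count, and your $\ker c$ obstruction is the discrete germ of the non\dash unimodularity obstruction discussed around Corollary~\ref{cor:Aqr}. Both proofs rest on the same assertion stated without proof in the text, namely that $\mathsf{Aff}(\mathbb{T})$ is the full automorphism group of the grandmother graph, and you flag that dependence appropriately.
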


How does one prove that a given graph is or is not a Cayley graph?

\begin{crt}\label{crit:Cayley}
Let $X$ be a locally finite, connected graph and $G$ be a subgroup
of $\mathsf{Aut}(X)$. Then $X$ is a Cayley graph of $G$ if and only
if $G$ acts on $X$ transitively and with trivial vertex\dash stabilisers.
\end{crt} 

Here the stabiliser of $x \in G$ is of course $G_x = \{ g\in G: gx =x\}$,
and ``trivial'' means that it consists only of the identity.

Now assume that a group of autmorphisms $G$ acts transitively on the grandmother
graph. Then $G \le \mathsf{Aff}(\mathbb{T})$. Let $x$ be a vertex and $y,z$ be
two of its successors. Then there must be $g \in G$ such that $gy = z$, so that
$g \neq \textrm{id}$. But we must have $gy^- = z^-$, that is $gx = x$. Thus,
$G_x$ is non\dash trivial, which proves Claim \ref{claim:granma}.

\smallskip

However, everybody will agree that the grandmother graph looks (vaguely) like
the tree itself, which \emph{is} a Cayley graph. So from the point of view of the 
initial question, this is not yet a satisfactory example.
Let us now come to the definition of ``look like''.

\begin{dfn}\label{def:qi}
Let $(X_1\,,d_1)$ and $(X_2\,,d_2)$ be two metric spaces.
A mapping $\varphi: X_1 \to X_2$ is called a \emph{quasi\dash isometry},
if there are constants $A > 0$ and $B \ge 0$ such that
for all $x_1\,,y_1 \in X_1$ and $x_2 \in X_2\,,$
\begin{enumerate}
\item[(i)] $d_2(x_2\,, \varphi X_1) \le B\quad$ (quasi\dash surjective), and
\item[(ii)] $\dfrac{1}{A}\, d_2(\varphi x_1\,,\varphi x_2) - B
\le d_1(x_1,y_1) \le A\, d_2(\varphi x_1\,,\varphi x_2) - B\;$ (quasi\dash bi\dash Lipschitz). 
\end{enumerate}
If $B=0$, the mapping is called \emph{bi\dash Lipschitz}.
\end{dfn}

Every quasi\dash isometry $\varphi$ has a quasi\dash inverse $\varphi^*: X_2 \to X_1\,$, 
i.e., a quasi\dash isometry such that $\varphi^*\, \varphi$ and $\varphi\, \varphi^*$
are bounded perturbations of the identity on $X_1\,$, resp.\ $X_2$ (i.e., the image
of any element is at bounded distance). In particular, quasi\dash isometry is
an equivalence relation. 

Any two Cayley graphs of a group with respect to different, finite symmetric sets
of generators are bi\dash Lipschitz. After being promoted by Gromov, the study of
quasi\dash isometry invariants of finitely generated groups has become a ``big business''
which is at the core of what is since then called Geometric Group Theory (in good
part replacing the earlier name ``Combinatorial Group Theory'').

The identity map on the vertex set is a bi\dash Lipschitz mapping between the grandmother 
graph and the tree, so that we have a non\dash Cayley vertex transitive graph which is
quasi-isometric with a Cayley graph. My question now could be formulated rigorously as
follows.

\begin{quote}
\emph{Is there a (connected, locally finite, infinite) vertex\dash transitive graph that is 
not quasi\dash isometric with some Cayley graph?}
\end{quote}    

I posed this question explicitly in \cite{SoWo} and \cite{W-top} (published 
in 1990, resp.\ 1991). 
This appeared to be a difficult  problem, and I learnt that there has to be a positive 
correlation between the difficulty of a mathematical question and the fame of 
the person who poses it. Initially, geometric group theorists
ignored my problem or even made fun of it. 
However, in the world of
Graph Theory, there is an exclusive minority interested in infinite graphs, and in 
the mid\dash early 1990ies, {\sc Diestel and Leader} came up with a construction of a 
graph which they believed to provide the answer to the question. This was what I 
later called the Diestel\dash Leader graph $\mathsf{DL}(2,3)$, whose construction will 
be explained in a moment. However, it resisted their and my efforts (as well as the 
efforts of several visitors of mine who were involved in this discussion) to prove 
that it was indeed not quasi\dash isometric with any Cayley graph.
At last, in 2001, Diestel and Leader made their construction and conjecture public
without a proof \cite{DiLe}.   

Let us now describe the construction. We take two trees $\mathbb{T}_p$ and
$\mathbb{T}_q$ with respective branching numbers $p$ and $q$ (not necessarily
distinct). We look at each of them as in Figure~2, but the second tree is
upside down. On each of them, we have the respective Busemann function $\mathfrak{h}$.
(We omit putting an index.) 

\begin{dfn}\label{def:DL}
The \emph{Diestel\dash Leader graph} $\mathsf{DL}(p,q)$ is
\begin{equation*}
\mathsf{DL}(p,q) 
= \{ (x_1\,,x_2) \in \mathbb{T}_p \times \mathbb{T}_q : 
\mathfrak{h}(x_1) + \mathfrak{h}(x_2) = 0 \},
\end{equation*}
and neighbourhood is given by
\begin{equation*}
(x_1\,,x_2) \sim (y_1\,,y_2) \;\Longleftrightarrow\; x_1 \sim y_1 \;\text{ and }\; x_2 \sim y_2\,. 
\end{equation*}
\end{dfn}
Thus, either $x_1^-=y_1$ and $y_2^- = x_2$ or vice versa.  

To visualize $\mathsf{DL}(p,q)$, draw $\mathbb{T}_p$ in horocyclic layers 
as in Figure~2, and right to it $\mathbb{T}_q$ in the same way, but upside down, 
with the respective horocycles $H_k(\mathbb{T}_p)$ and $H_{-k}(\mathbb{T}_q)$ on the 
same level. Connect the two origins $o_1$, $o_2$ by
an elastic spring. It is allowed to move along each of the two trees, 
may expand infinitely, but must always remain in horizontal position. 
The vertex set of $\mathsf{DL}(p,q)$ consists of all admissible positions of 
the spring. From a position
$(x_1\,,x_2)$ with $\mathfrak{h}(x_1) + \mathfrak{h}(x_2) =0$ the spring may
move downwards to one of the $q$ successors of $x_2$ in $\mathbb{T}_q\,$, and at the same 
time to the predecessor of $x_1$ in $\mathbb{T}_p\,$, or it may move upwards 
in the analogous way. Such a move corresponds
to going to a neighbour of $(x_1\,,x_2)$. Figure 2 depicts
$\mathsf{DL}(2,2)$.


\begin{figure}[h]
\hfill\beginpicture
\setcoordinatesystem units <3mm,3.5mm> 

\setplotarea x from -4 to 30, y from -10 to 6.4
\arrow <5pt> [.2,.67] from 4 4 to 1 7
\put{$\varpi_1$} [rb] at 1.2 7.2

\put{$o_1$} [lb] at  8.15 0.2

\plot -4 -4       4 4         /         
\plot 4 4         12 -4          /      
\plot -2 -2       -2.95 -4 /            
\plot -.5 -2      -1.9 -4     /         
\plot -.5 -2      -.85 -4   /           
\plot 1 -2        .2 -4      /          
\plot 1 -2        1.25  -4     /        
\plot 2.5 -2      2.3  -4     /         
\plot 2.5 -2      3.35 -4      /        
\plot 5.5 -2      4.65  -4    /         
\plot 5.5  -2     5.7  -4      /        
\plot 7  -2       6.75   -4   /         
\plot 7 -2        7.8  -4      /        
\plot 8.5  -2     8.85  -4    /         
\plot 8.5  -2     9.9  -4      /        
\plot 10  -2      10.95  -4   /         
\plot 0  0        -.5  -2      /        
\plot 2 0         1 -2     /            
\plot 2 0         2.5   -2     /        
\plot 6 0         5.5    -2    /        
\plot 6 0         7 -2         /        
\plot 8 0         8.5 -2       /        
\plot 2 2         2 0         /         
\plot 6 2         6 0         /         

\arrow <5pt> [.2,.67] from 22 -4 to 25 -7
\put{$\varpi_2$} [lt] at 25.2 -7.2

\put{$o_2$} [rt] at  17.95 -.2

\plot 14  4       22 -4       /         
\plot 22 -4         30 4         /      
\plot 16 2       15.05 4 /              
\plot 17.5 2     16.1  4     /          
\plot 17.5 2     17.15  4   /           
\plot 19  2       18.2  4      /        
\plot 19 2         19.25   4     /      
\plot 20.5 2       20.3   4     /       
\plot 20.5  2      21.35 4      /       
\plot 23.5 2       22.65  4    /        
\plot 23.5  2      23.7   4      /      
\plot 25   2       24.75   4   /        
\plot 25  2        25.8  4      /       
\plot 26.5   2     26.85  4    /        
\plot 26.5   2     27.9   4      /      
\plot 28   2      28.95   4   /         
\plot 18  0        17.5  2      /       
\plot 20 0         19  2     /          
\plot 20 0         20.5   2     /       
\plot 24 0         23.5    2    /       
\plot 24 0         25 2         /       
\plot 26 0         26.5 2       /       
\plot 20 -2        20 0         /       
\plot 24 -2        24 0         /       
\put {$\circ$} at 8 0
\put {$\circ$} at 18 0
\plot 8.25 0  12.1 0 /
\plot 13.9 0 17.78 0 /
\plot    12.1   0    12.25 .4    12.25 -.4   12.55 .4   12.55 -.4
        12.85 .4   12.85 -.4  13.15 .4   13.15 -.4  13.45 .4
        13.45 -.4  13.75 .4   13.75 -.4  13.9 0     13.9  0 /

\setdashes <2pt>
\putrule from -4.5 -7  to  12.5 -7
\putrule from  13.5 7  to  30.5 7


\put {$\vdots$} at 4 -5.2
\put {$\vdots$} at 22 5.5

\put {Figure~4} at 35 -8.5
\endpicture
\end{figure}

We first explain that $\mathsf{DL}(p,q)$ is vertex\dash transitive, but when
$p \ne q$, it is not a Cayley graph. Recall the group $\mathsf{Aff}(\mathbb{T})$ 
of all automorphisms that preserve the predecessor relation, where 
$\mathbb{T}= \mathbb{T}_p$ or $=\mathbb{T}_q\,$. One easily verifies (see
{\sc Cartwright, Kaimanovich and Woess}~\cite{CKW}) that the mapping
\begin{equation}\label{eq:Phi}
\Phi: \mathsf{Aff}(\mathbb{T}) \to \mathbb{Z}\,,\quad 
\Phi(g) = \mathfrak{h}(gx) - \mathfrak{h}(x)
\end{equation}
is independent of $x \in \mathbb{T}$, and thus a homomorphism onto the 
additive group $\mathbb{Z}\,$. That is, every $g \in \mathsf{Aff}(\mathbb{T})$
shifts the tree up or down by the vertical amount $\Phi(g)$. Now the following 
is not hard to prove.

\begin{pro}\label{pro:af}
The group 
$$
\mathcal{A} = \mathcal{A}(p,q) 
=\{ (g_1\,,g_2) \in \mathsf{Aff}(\mathbb{T}_p) \times \mathsf{Aff}(\mathbb{T}_q) :
\Phi(g_1) + \Phi(g_2) = 0 \}
$$ 
acts transitively on $\mathsf{DL}(p,q)$ by
$$
(x_1\,,x_2) \mapsto (g_1x_1\,, g_2x_2)\,.
$$
If $p \ne q$, then this is the full automorphism group of $\mathsf{DL}(p,q)\,$,
while when $p=q$, then it has index $2$ in the full automorphism group,
which is generated by $\mathcal{A}$ and the ``reflection'' $(x_1\,,x_2) \mapsto (x_2\,,x_1)$. 
\end{pro}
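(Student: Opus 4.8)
The plan is to prove transitivity directly and then pin down the full automorphism group by reconstructing, from the bare graph, the two tree projections together with the vertical (Busemann) data; throughout I work in the non-degenerate range $p,q\ge 2$, where the construction is interesting. First I would check that $\mathcal{A}$ really acts: for $(g_1,g_2)\in\mathcal{A}$ and $(x_1,x_2)\in\mathsf{DL}(p,q)$ one has $\mathfrak{h}(g_1x_1)+\mathfrak{h}(g_2x_2)=\mathfrak{h}(x_1)+\mathfrak{h}(x_2)+\Phi(g_1)+\Phi(g_2)=0$, and adjacency is preserved coordinatewise, so $\mathcal{A}\le\mathsf{Aut}(\mathsf{DL}(p,q))$. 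Transitivity is then immediate from transitivity of $\mathsf{Aff}(\mathbb{T}_p)$ and $\mathsf{Aff}(\mathbb{T}_q)$: given $(x_1,x_2)$ and $(y_1,y_2)$, pick $g_i$ with $g_ix_i=y_i$; since both points lie in $\mathsf{DL}(p,q)$, $\Phi(g_1)+\Phi(g_2)=\bigl(\mathfrak{h}(y_1)-\mathfrak{h}(x_1)\bigr)+\bigl(\mathfrak{h}(y_2)-\mathfrak{h}(x_2)\bigr)=0$, so $(g_1,g_2)\in\mathcal{A}$.

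The heart of the matter is to recover the vertical structure combinatorially. Write $\mathfrak{h}(x_1)=-\mathfrak{h}(x_2)$ for the \emph{height} of $v=(x_1,x_2)$. Each $v$ has $p$ \emph{up}-neighbours $(\hat x_1,x_2^-)$ (with $\hat x_1$ a successor of $x_1$) and $q$ \emph{down}-neighbours $(x_1^-,\hat x_2)$. Next I would count common neighbours: two up-neighbours of $v$ share exactly $q$ common neighbours, two down-neighbours share exactly $p$, while an up- and a down-neighbour share exactly one, namely $v$ itself. As $p,q\ge 2$, the relation ``$a,b$ have at least two common neighbours'' is precisely ``$a,b$ lie in the same class'', so the unordered partition of the edges at $v$ into the up-class and the down-class is an isometry invariant; and when $p\ne q$ the two classes are individually recognizable (the up-class is the one whose internal pairs share $q$ neighbours). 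This is exactly where $p\ne q$ forbids, and $p=q$ permits, interchanging the two directions.

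To globalize I would argue as follows. Any $\gamma\in\mathsf{Aut}$ carries the edge-partition at $v$ to that at $\gamma v$; since a given edge is of up-type at one endpoint and of down-type at the other, the class-splitting is compatible across edges, so by connectedness $\gamma$ either preserves the orientation induced by $\mathfrak{h}$ everywhere (whence $\mathfrak{h}(\gamma v)=\mathfrak{h}(v)+c$ for a constant $c$) or reverses it everywhere. The decisive lemma to establish is an intrinsic description of the fibres of $\pi_1,\pi_2$: for $u,u'$ of equal height, $\pi_1(u)=\pi_1(u')$ iff $u,u'$ are joined by a path along which $\mathfrak{h}$ never drops below their common value, and dually $\pi_2(u)=\pi_2(u')$ iff they are joined by a path along which $\mathfrak{h}$ never rises above it. This comes from reading off the confluents: a path reaches $x_1\curlywedge y_1$ by a downward excursion of $\mathfrak{h}$ and $x_2\curlywedge y_2$ by an upward one, so $\pi_1(u)=\pi_1(u')$ exactly when no downward excursion is forced. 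I expect this confluence-and-height bookkeeping to be the main technical obstacle. Granting it, an orientation-preserving $\gamma$ respects both fibrations, hence descends to bijections $g_1,g_2$ of the two trees; compatibility with down-, resp.\ up-neighbours forces $g_i(x_i^-)=(g_ix_i)^-$, so $g_1\in\mathsf{Aff}(\mathbb{T}_p)$ and $g_2\in\mathsf{Aff}(\mathbb{T}_q)$, and $\gamma=(g_1,g_2)$ with $\Phi(g_1)+\Phi(g_2)=0$ by height preservation; thus $\gamma\in\mathcal{A}$.

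Finally I would split into the two cases. When $p\ne q$ the orientation is always preserved, so $\mathsf{Aut}(\mathsf{DL}(p,q))=\mathcal{A}$. When $p=q$ the reflection $\sigma(x_1,x_2)=(x_2,x_1)$ is an automorphism reversing $\mathfrak{h}$; for any orientation-reversing $\gamma$ the composite $\sigma\gamma$ is orientation-preserving and hence lies in $\mathcal{A}$, giving $\mathsf{Aut}(\mathsf{DL}(p,p))=\mathcal{A}\sqcup\sigma\mathcal{A}$. Since $\sigma\notin\mathcal{A}$ (it reverses $\mathfrak{h}$, whereas every element of $\mathcal{A}$ merely shifts $\mathfrak{h}$ by a constant), $\mathcal{A}$ has index exactly $2$ and $\mathsf{Aut}(\mathsf{DL}(p,p))$ is generated by $\mathcal{A}$ together with $\sigma$, as asserted.
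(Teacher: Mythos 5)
The paper offers no proof of this proposition at all --- it is introduced with ``Now the following is not hard to prove'' and left to the reader (resp.\ to the literature; detailed treatments of automorphism groups of horocyclic products of trees appear e.g.\ in \cite{BNW}) --- so there is no in-paper argument to compare yours against; evaluated on its own terms, your proposal is correct. The transitivity half is complete as written. For the automorphism-group half, your common-neighbour counts are right: two up-neighbours $(\hat x_1,x_2^-)$, $(\tilde x_1,x_2^-)$ of $v=(x_1,x_2)$ have as common neighbours exactly the $q$ vertices $(x_1,w)$ with $w$ a successor of $x_2^-$; dually, two down-neighbours share exactly $p$ vertices; and an up/down pair shares only $v$ itself, since a common neighbour would have to be simultaneously of the form $(x_1,w)$ and $(u,x_2)$. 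Hence for $p,q\ge 2$ the up/down partition of the edges at each vertex is canonical, for $p\ne q$ the two classes are individually recognizable (already by their cardinalities $p$ and $q$), and your propagation argument works because each edge is of up-type at one endpoint and of down-type at the other, so preservation of the partition at a vertex forces preservation at every neighbour. The fibre lemma that you leave at the level of ``granting it'' is indeed true, and is settled by exactly the confluent bookkeeping you indicate: if $x_1\ne y_1$ have the same height $k$, then every path in $\mathbb{T}_p$ from $x_1$ to $y_1$ passes through $x_1\curlywedge y_1$, which lies at height $<k$, so the height along any connecting path in $\mathsf{DL}$ must dip below $k$; conversely, if $x_1=y_1$, let the second coordinate run down to $x_2\curlywedge y_2$ and back up, while the first coordinate makes an upward excursion that necessarily returns to $x_1$ because descents in a tree pass through unique predecessors --- this path never drops below height $k$. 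With that lemma, the descent of an orientation-preserving automorphism to a pair $(g_1,g_2)$ satisfying $g_i(x^-)=(g_ix)^-$ and $\Phi(g_1)+\Phi(g_2)=0$, and the coset decomposition $\mathsf{Aut}=\mathcal{A}\sqcup\sigma\mathcal{A}$ when $p=q$, go through as you describe. So your plan is sound and contains no gap of substance; what remains is only the routine writing-out of the steps you flagged, and (a triviality worth a line) the observation that $\mathcal{A}$ acts faithfully, so that it really embeds in the automorphism group.
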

(The vertex set of) $\mathsf{DL}(p,q)$ is the disjoint union of the \emph{horoplanes} 
$$
H_{k,-k} = \{ (x_1\,,x_2) \in \mathbb{T}_p \times \mathbb{T}_q : \mathfrak{h}(x_1) = k\,,\;
\mathfrak{h}(x_2) = -k \}\,
$$
and every $g=(g_1\,,g_2) \in \mathcal{A}$ maps $H_{k,-k}$ to $H_{m,-m}\,$, where $m = k + \Phi(g_1)$.

\begin{lem}\label{lem:DLnoC}
If $q \ne p$ then $\mathsf{DL}(p,q)$ is not a Cayley graph of some finitely
generated group.
\end{lem}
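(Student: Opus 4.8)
The plan is to use Criterion (\ref{crit:Cayley}) together with Proposition (\ref{pro:af}): since $\mathsf{DL}(p,q)$ is vertex-transitive (by Proposition~\ref{pro:af}), it is a Cayley graph of a finitely generated group $G$ if and only if there exists a subgroup $G \le \mathsf{Aut}(\mathsf{DL}(p,q))$ that acts simply transitively, i.e.\ transitively and with trivial vertex-stabilisers. So I would argue by contradiction: assume such a $G$ exists and derive a constraint that can only be satisfied when $p = q$.

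First I would exploit the structure of the full automorphism group in the case $p \neq q$. By Proposition~\ref{pro:af}, when $p \neq q$ we have $\mathsf{Aut}(\mathsf{DL}(p,q)) = \mathcal{A}(p,q)$, and the ``reflection'' swapping the two coordinates is \emph{not} available (it is not even a well-defined map when $p \neq q$, since it would have to send $\mathbb{T}_p$-coordinates to $\mathbb{T}_q$-coordinates). Hence any hypothetical $G$ is a subgroup of $\mathcal{A}$, and every $g = (g_1,g_2) \in G$ respects the horoplane decomposition, mapping $H_{k,-k}$ to $H_{m,-m}$ with $m = k + \Phi(g_1)$. Composing $G \le \mathcal{A}$ with the homomorphism $g \mapsto \Phi(g_1)$ gives a homomorphism $\beta: G \to \mathbb{Z}$ whose value records the vertical displacement; crucially, $\beta(g)$ depends only on $g$ and not on the chosen vertex.

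The heart of the argument is a counting/growth obstruction on the kernel of $\beta$. If $G$ acts simply transitively, then for a fixed base vertex $o = (o_1,o_2)$ the orbit map $g \mapsto go$ is a bijection $G \to \mathsf{DL}(p,q)$, so $G$ inherits the vertex set and in particular the horoplane partition: the fibre $\beta^{-1}(k)$ corresponds bijectively to the horoplane $H_{k,-k}$. Now I would count how the subgroup $N = \ker\beta$ (those automorphisms fixing the level) acts within a single horoplane, and compare the ``branching'' one sees going up versus going down. The key asymmetry is that moving from level $k$ to level $k+1$ multiplies the number of available $\mathbb{T}_q$-successors by $q$ while dividing the $\mathbb{T}_p$-descendants by $p$ (and vice versa going down): the two trees have different branching numbers $p \neq q$. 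A simply transitive group action would force a single algebraic object (the group $G$, or equivalently $N$ together with a generator realising a shift) to reconcile these two incompatible multiplicities simultaneously in both trees, which is impossible. Concretely, I expect to show that simple transitivity forces, for the generator $t \in G$ with $\beta(t) = 1$, conjugation by $t$ to act on $N$ as an automorphism that scales one natural filtration by $p$ and another by $q$; the index computations $[N : tNt^{-1}]$ read off from the tree structure would then have to equal both $p$ and $q$, forcing $p = q$.

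The main obstacle, and the step requiring the most care, is making the growth/index obstruction precise and rigorous rather than merely suggestive. The informal picture ``the two sides branch differently'' must be converted into a genuine algebraic invariant of the abstract group $G$ that is computable from the action and that distinguishes $p$ from $q$. I would pin this down by analysing the stabiliser-free orbit structure through the homomorphism $\beta: G \to \mathbb{Z}$ and the induced HNN- or semidirect-product-like decomposition $G = N \rtimes \langle t\rangle$ (or an ascending HNN extension over $N$), then computing the two indices $[N : N \cap tNt^{-1}]$ and $[N : N \cap t^{-1}Nt]$ directly from the tree-successor counts; these must come out as $q$ and $p$ respectively, whereas trivial stabilisers together with transitivity force a compatibility that is only consistent when $p = q$. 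The one subtlety to check carefully is that the vertex-stabiliser triviality is genuinely used: without it (as the $p=q$ reflection example and the grandmother-graph discussion illustrate) transitivity alone does not obstruct the construction, so the contradiction must be extracted precisely at the point where triviality of $G_o$ collides with the mismatched branching.
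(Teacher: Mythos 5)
Your high-level frame is fine --- Criterion \ref{crit:Cayley} plus Proposition \ref{pro:af} reduce the claim to showing that no subgroup $G \le \mathcal{A}(p,q)$ acts simply transitively, and your homomorphism $\beta\colon G \to \mathbb{Z}$, $g = (g_1,g_2) \mapsto \Phi(g_1)$, with $N = \ker\beta$ acting simply transitively on each horoplane, is all correct. But the step you yourself flag as the crux does not just need care; it fails as stated. Since $N = \ker\beta$ is the kernel of a homomorphism, it is \emph{normal} in $G$, so $tNt^{-1} = N$ and both of your proposed indices $[N : N\cap tNt^{-1}]$ and $[N : N\cap t^{-1}Nt]$ equal $1$; they can never come out as $q$ and $p$. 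More fundamentally, no invariant of the abstract extension $G = N \rtimes \langle t\rangle$ can detect the asymmetry: in the case $p=q$ the lamplighter group $\mathbb{Z}_p \wr \mathbb{Z}$ realises exactly this structure, and the mismatch between $p$ and $q$ lives in how the edges of $\mathsf{DL}(p,q)$ join consecutive horoplanes, not in the abstract group $G$. So your outline never actually produces a contradiction.

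There are two ways to turn the branching asymmetry into a genuine obstruction. The paper's proof is purely combinatorial and needs no decomposition of $G$ at all: assume $p < q$, and set $A = \{(o_1,x_2) : x_2^- = o_2^-\} \subset H_{0,0}$, so $|A| = q$, and $B = \{(x_1,o_2^-) : x_1^- = o_1\} \subset H_{1,-1}$, so $|B| = p$. The induced subgraph on $A \cup B$ is complete bipartite, and $B$ is the full set of neighbours in $H_{1,-1}$ of \emph{every} point of $A$. If $G$ is transitive, pick $g_x \in G$ with $g_x o = x$ for each $x \in A$; since $g_x$ maps $o$ to a vertex of the same horoplane, it preserves every horoplane setwise, hence permutes $B$. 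Pigeonhole on $|B| = p < q = |A|$ gives distinct $x, x' \in A$ and $y \in B$ with $g_x y = g_{x'} y$, so $g_x^{-1} g_{x'}$ is a nontrivial element stabilising $y$, contradicting Criterion \ref{crit:Cayley}. Alternatively, the rigorous form of your ``scaling'' intuition is the \emph{modular function}, not a subgroup index: by Lemma \ref{lem_mod} and Corollary \ref{cor:Aqr}, the stabiliser-orbit counts (which is exactly where ``$p$ neighbours one way, $q$ the other'' gets recorded) make $\mathcal{A}(p,q)$ non-unimodular when $p \ne q$, and a simply transitive subgroup would be a discrete co-compact subgroup, which a non-unimodular locally compact group cannot contain. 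Either route completes the proof; your proposal, as written, does not.
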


\begin{proof}
Suppose that $q > p$, and that $G$ is any group of automorphisms that acts transitively
on $\mathsf{DL}(p,q)\,$. Consider the sets $A = \{ (o_1\,,x_2) : x_2^- = o_2^- \} \subset H_{0,0}$ and 
$B = \{ (x_1\,,o_2^-) : x_1^-= o_1\} \subset H_{1,-1}\,$. Then $|A| = q$, $|B| = p$, and 
the subgraph of $\mathsf{DL}(p,q)$ induced by $A \cup B$ is the complete bipartite graph over
$A$ and $B$ (there is an edge between each element of $A$ and each element of $B$). The set
$B$ consists of all neighbours of $A$ in $H_{1,-1}\,$.

For each $x = (o_1\,,x_2) \in A$ there must be $g_x \in G$ such that $g_xo=x$, where $o=(o_1\,,o_2)\,$.
Since $G \subset \mathcal{A}$, each $g_x$ sends every horoplane to itself, and preserves
neighbourhood. We conclude that each $g_x$ sends $B$ onto itself. But since $|B| <|A|$,
there  must be two distinct $x,x' \in A$ and $y \in B$ such that $g_xy = g_{x'}y$.
Then $g_x^{-1}g_{x'}$ stabilises $y$, although it is different from the identity.
In view of Criterion \ref{crit:Cayley},  $\mathsf{DL}(p,q)$ cannot be a 
Cayley graph of $G$.    
\end{proof}
The last proof gives a clue why $\mathsf{DL}(p,q)$ should not be quasi\dash isometric with
some Cayley graph, when $p \ne q$: briefly spoken, our graph grows on the order
of $p^n$ in one vertical direction, and of order $q^n$ in the opposite direction.

The result was finally announced in 2007 by a group of quasi\dash isometry experts, {\sc Eskin, Fisher
and Whyte}~\cite{ESW1}, and the proof is contained in the first of the two papers
\cite{ESW2}, \cite{ESW3} within a more general framework of quasi\dash isometry classification
of structures whose construction is very similar to DL-graphs. Thus, at last, my question 
made it to the {\em Annals}:

\begin{thm}{\rm\protect\cite{ESW2}}.\label{thm} If $q \ne p$ then $\mathsf{DL}(p,q)$ is not quasi\dash isometric
 with any finitely generated group. 
\end{thm}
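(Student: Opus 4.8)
The plan is to argue by contradiction: assume $p \neq q$ and that there is a finitely generated group $G$ together with a quasi\dash isometry $\varphi: X(G,S) \to \mathsf{DL}(p,q)$ in the sense of Definition~\ref{def:qi}. The soft argument of Lemma~\ref{lem:DLnoC} cannot be applied directly, because a quasi\dash isometry is free to shuffle and distort the horocyclic layers $H_{k,-k}$, and one has no genuine automorphisms to feed into Criterion~\ref{crit:Cayley}. Instead I would first understand the whole group $\mathrm{QI}(\mathsf{DL}(p,q))$ of self\dash quasi\dash isometries modulo bounded perturbation, and only then transport the left\dash translation action of $G$ onto $\mathsf{DL}(p,q)$ to reach a contradiction.

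Step~1 (\emph{rigidity of self\dash quasi\dash isometries}). The heart of the matter is to show that every quasi\dash isometry $\psi:\mathsf{DL}(p,q)\to\mathsf{DL}(p,q)$ lies at bounded distance from a standard map respecting the product\dash of\dash trees structure: one that coarsely preserves the decomposition into horoplanes and induces bi\dash Lipschitz self\dash maps of the two lower boundaries $\partial^*\mathbb{T}_p\cong\mathbb{Q}_p$ and $\partial^*\mathbb{T}_q\cong\mathbb{Q}_q$. Locally $\mathsf{DL}(p,q)$ has $p$ neighbours one step up and $q$ neighbours one step down, so for $p\neq q$ the two vertical directions are already combinatorially distinguishable; the content of rigidity is that this asymmetry survives coarsening, so that $\psi$ cannot reverse the vertical orientation (this is the failure, for $p\neq q$, of the reflection in Proposition~\ref{pro:af}) and must carry the Busemann height $\mathfrak{h}$ to itself up to an additive constant. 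The only route I know is the coarse differentiation technique of \cite{ESW2}: one subdivides long vertical geodesic segments into many intermediate scales and shows that at almost every scale and almost every location $\psi$ is close to a height\dash respecting affine map, then patches these local statements together. This step carries the entire technical weight of the theorem and is the main obstacle.

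Step~2 (\emph{the height homomorphism}). Granting Step~1, I would invoke the Milnor--\v{S}varc lemma in coarse form: since $G$ is quasi\dash isometric to $\mathsf{DL}(p,q)$, conjugating the left translations $L_g$ by $\varphi$ and a quasi\dash inverse $\varphi^*$ produces a family $\hat g=\varphi\,L_g\,\varphi^*$ of self\dash quasi\dash isometries with \emph{uniform} constants, acting coboundedly (because $G$ acts transitively on itself) and metrically properly. By Step~1 each $\hat g$ preserves $\mathfrak{h}$ up to an additive constant $\tau(g)\in\mathbb{Z}$, and the cocycle relation forces $\tau:G\to\mathbb{Z}$ to be, after a bounded correction, a genuine homomorphism, the coarse counterpart of the homomorphism $\Phi$ of \eqref{eq:Phi}. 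Moreover the induced boundary maps scale the natural ultrametrics on $\mathbb{Q}_p$ and $\mathbb{Q}_q$ by the reciprocal exponential rates $p^{-\tau(g)}$ and $q^{\,\tau(g)}$.

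Step~3 (\emph{growth incompatibility}). Finally I would derive a contradiction from the two incompatible exponential rates. The upward and downward half\dash balls in $\mathsf{DL}(p,q)$ grow like $p^{n}$ and $q^{n}$, and by Step~1 these rates are attached, in a quasi\dash isometry invariant way, to the preserved vertical direction. A cobounded, metrically proper action of a single finitely generated group homogenises the geometry: the scaling homomorphism $\tau$ would have to expand one boundary measure by exactly the factor by which it contracts the other, a coarse ``unimodularity'' constraint forcing $\log p=\log q$. This is precisely the quasi\dash isometry\dash invariant upgrade of the counting behind Lemma~\ref{lem:DLnoC}: there the unequal cardinalities $|A|=q\neq p=|B|$ produced a nontrivial stabiliser, whereas here the unequal exponential rates $p\neq q$ obstruct any geometric group action up to quasi\dash isometry, contradicting the assumed quasi\dash isometry with $X(G,S)$ and completing the proof.
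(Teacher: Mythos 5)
The first thing to note is that the paper does not actually prove this theorem: it is quoted from Eskin, Fisher and Whyte \cite{ESW2}, and the survey supplies only the heuristic that $\mathsf{DL}(p,q)$ grows like $p^n$ in one vertical direction and like $q^n$ in the other, together with the genuinely proved but strictly weaker fact (Corollary~\ref{cor:Aqr} and the remark following it) that non-unimodularity of $\mathcal{A}(p,q)$ excludes discrete co-compact subgroups --- which rules out groups \emph{acting by automorphisms}, not groups merely quasi-isometric to the graph. So your proposal can only be measured against the argument of \cite{ESW2}, \cite{ESW3}, and as a reconstruction of that argument its skeleton is right: coarse differentiation shows every quasi-isometry of $\mathsf{DL}(p,q)$ is at bounded distance from a height-respecting one; a quasi-isometry with a finitely generated group $G$ transports the left translations to a proper, cobounded quasi-action by uniformly height-respecting quasi-isometries; and the contradiction is a coarse form of the lattice/non-unimodularity obstruction.

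Judged as a proof, however, there are genuine gaps, and you flag the main one yourself: Step~1 is not a step but the theorem --- the entire coarse differentiation argument, which occupies most of \cite{ESW2}, is named rather than carried out. Two further points would fail as written. In Step~2, a map $\tau:G\to\mathbb{Z}$ satisfying the cocycle identity only up to bounded error is a quasimorphism, and quasimorphisms of non-abelian groups are in general \emph{not} at bounded distance from homomorphisms (free groups carry an infinite-dimensional space of unbounded homogeneous quasimorphisms), so the ``bounded correction'' claim needs a real argument. In Step~3, the ``coarse unimodularity constraint'' is asserted, not derived; this is precisely where \cite{ESW2} needs more than counting: the cobounded quasi-action by height-respecting maps induces a uniform group of bi-Lipschitz maps of the boundaries $\partial^*\mathbb{T}_p$ and $\partial^*\mathbb{T}_q$, and rigidity theorems for such uniform groups (in the spirit of Tukia's theorem, here due to Mosher, Sageev and Whyte) are invoked to conjugate the quasi-action to a genuine isometric action, exhibiting $G$ as a co-compact lattice in the group $\mathcal{A}(p,q)$ of Proposition~\ref{pro:af}; only at that point does the already rigorous non-unimodularity argument of Corollary~\ref{cor:Aqr} apply and finish the proof. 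In short: right strategy, correctly attributed to the right source, but what you have written is a roadmap of the cited proof with its two hardest stretches left blank.
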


\section{Horocyclic products}\label{sec:horo}

We now explain the first of the two notions of the title of this article. 
Let $X$ be a metric space.  A \emph{level function} or \emph{Busemann function}
is a continuous surjection $\mathfrak{h}: X \to \mathbb{L}$, where 
$\mathbb{L} = \mathbb{R}$, or when $X$ is discrete, resp.\ totally disconnected, 
$\mathbb{L} = \mathbb{Z}$. 
We write $H_l$ ($l\in\mathbb{L}$) for the associated level sets, i.e., 
the preimages under $\mathfrak{h}$ of $l$. We call them \emph{horocycles} or
\emph{horospheres}.

Usually, our $X$ will carry additional structure, and then the function $\mathfrak{h}$
should be adapted to that structure. If $X$ is a (connected) graph, then it
has to be a graph homomorphism (neighbourhood preserving surjection) onto 
$\mathbb{Z}$, the latter seen as the bi\dash infinite line graph. In particular, edges of
$X$ are only allowed between successive horocycles.

If $X=G$ is a discrete (or more generally, totally disconnected) group, then we will
need $\mathfrak{h}$ to be a
group homomorphism onto $\mathbb{Z}$, while if it is a connected locally compact
group, it has to be a (continuous) homomorphism onto $\mathbb{R}$. (More general
choices of Abelian groups $\mathbb{L}$ also work, but will not be considered here.)

We refer to $(X,\mathfrak{h})$ as a \emph{Busemann pair} over $\mathbb{L}$,
although this expression is justified only in specific cases.

\begin{dfn}\label{def:horprod}
Let $(X_1\,,\mathfrak{h}_1)$ and $(X_2\,,\mathfrak{h}_2)$ be two Busemann
pairs over the same $\mathbb{L}$. We shall commonly use the same symbol $\mathfrak{h}$
for both $\mathfrak{h}_i\,$. The \emph{horocyclic product} of
$X_1$ and $X_2$ is
$$
X_1 \times_{\mathfrak{h}} X_2 =
\{ (x_1\,,x_2) \in X_1 \times X_2 :  \mathfrak{h}(x_1) +  \mathfrak{h}(x_2) = 0 \}
$$
(On some occasions it may be more natural to require that  
$\mathfrak{h}(x_1) - \mathfrak{h}(x_2) = 0$.) 
\end{dfn}

In general, $X_1 \times_{\mathfrak{h}} X_2$ is 
a topological subspace of the direct product 
space $X_1 \times X_2\,$. In the group case, it is a normal subgroup of the direct product.
In the graph case, as edges in the $X_i$ may occur only between successive horocycles,
$X_1 \times_{\mathfrak{h}} X_2$ is an induced subgraph of the direct product of the two graphs. That is,
	\begin{align*}
	&(x_1,x_2) \sim (y_1\,,y_2) 
	\iff 
	x_i \sim y_i \; (i=1,2), \quad \text{and then}
	\\
	& 
	\mathfrak{h}(x_1) - \mathfrak{h}(y_1) = \mathfrak{h}(y_2) - \mathfrak{h}(x_2) = \pm 1\,.
	\end{align*}

\begin{rmk}\label{rmk:metric graphs} It may also be good to consider graphs as one\dash dimensional
complexes, where each edge is a copy of the unit interval. The graph metric extends naturally to the
interior points of the edges. If we have a $\mathbb{Z}$-valued Busemann function $\mathfrak{h}$ on the
vertex set, and $e =[x,y]$ is an edge with $x \in H_k$ and $y \in H_{k+1}\,$,
then we can extend $\mathfrak{h}$ to every interior point $z \in e$: if 
$d(z,x) = \kappa \in [0\,,\,1)$, then $\mathfrak{h}(z) = k + \kappa$. In this way, 
the Busemann function becomes $\mathbb{R}$-valued, and the topology of the 
resulting horocyclic product yields just the one\dash dimensional complex that one gets 
from the graph construction with edges $\equiv$ intervals.
\end{rmk}

\bigskip

\vbox{\noindent\textbf{Three sister structures} 

\smallskip\noindent
We now consider three families of spaces. The fact that they share many common 
geometric features becomes apparent by realising that they all are horocyclic
products:} 
\begin{enumerate}
\item[A.] horocyclic product of two trees $\to$ Diestel\dash Leader graphs;
\item[B.] horocyclic product of a hyperbolic half\dash plane and a tree $\to$ treebolic spaces;
\item[C.] horocyclic product of two hyperbolic half\dash planes $\to$ 
$\mathsf{Sol}$-groups, resp.\ manifolds.
\end{enumerate}
Thinking of a graph as a 1-complex as in Remark \ref{rmk:metric graphs}, 
our structures are $1$\dash dimensional in A, $2$\dash dimensional in B, and $3$\dash dimensional in C.

\bigskip

\noindent\textbf{A. More on Diestel\dash Leader graphs}

\smallskip\noindent
We start with some general observations.
The automorphism group of any locally finite, connected graph $X$ carries the topology of
pointwise convergence (on the vertex set), and as such, it is a locally compact,
totally disconnected group. See e.g. {\sc Trofimov}~\cite{Tr}, or \cite{W-top}. 
Let $G$ be any closed subgroup of
$\mathsf{Aut}(X)$ that acts transitively. It has a left Haar measure $\lambda_G\,$ 
(unique up to multiplication with a constant), and there is the \emph{modular function}
$\Delta_G$ defined by $\Delta_G(g) = \lambda_G(Ug)/\lambda_G(U)$, where $U \subset G$ 
is open with compact closure. $\Delta_G(g)$ is independent of the choice of $U$, and we
may take $U = G_x$, the stabiliser of some vertex $x$. The group is called unimodular 
when $\Delta_G \equiv 1$. 

\begin{lem}\label{lem_mod} {\rm \cite{Schl}, \cite{Tr}.}
If $g \in G$ and $gx = y$ then $\Delta_G(g) = |G_ox|/|G_xo|$. 
\end{lem}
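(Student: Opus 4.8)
The plan is to route everything through Haar measures of vertex stabilisers: I would first rewrite $\Delta_G(g)$ as a ratio $\lambda_G(G_o)/\lambda_G(G_x)$ of stabiliser measures, and then identify each such ratio with an orbit size via the orbit--stabiliser theorem. Throughout I read the statement with $o$ as the reference vertex and $x = go$, so that the right-hand side genuinely depends only on the pair $(o,x)$; the case of an arbitrary $g$ with $gu=v$ then follows verbatim with $o,x$ replaced by $u,v$.

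First I would record the structural facts that make all the relevant quantities finite. Since $X$ is locally finite and connected, every sphere $S_n(v)=\{w : d(v,w)=n\}$ is finite, and in the topology of pointwise convergence each stabiliser $G_v$ is a \emph{compact open} subgroup of $G$ (closed inside the compact group $\prod_n \mathrm{Sym}(S_n(v))$, and open as a basic identity neighbourhood). Consequently $G_o\cap G_x$ is again compact open, and the indices $[G_o:G_o\cap G_x]$ and $[G_x:G_o\cap G_x]$ are finite, because the orbits $G_o x\subseteq S_{d(o,x)}(o)$ and $G_x o\subseteq S_{d(o,x)}(x)$ are finite.

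The core computation then has two short steps. In step one I use the conjugation behaviour of $\lambda_G$: from $go=x$ one has $G_x=g\,G_o\,g^{-1}$, and the paper's convention $\Delta_G(g)=\lambda_G(Ug)/\lambda_G(U)$ together with left-invariance forces $\lambda_G(Ag^{-1})=\Delta_G(g)^{-1}\lambda_G(A)$ and hence $\lambda_G(gAg^{-1})=\Delta_G(g)^{-1}\lambda_G(A)$; applied to $A=G_o$ this gives $\Delta_G(g)=\lambda_G(G_o)/\lambda_G(G_x)$. In step two I convert the two stabiliser measures into orbit sizes: for compact open $H\le K$ the left-coset decomposition and left-invariance yield $[K:H]=\lambda_G(K)/\lambda_G(H)$, while orbit--stabiliser gives $|G_o x|=[G_o:G_o\cap G_x]$ and $|G_x o|=[G_x:G_o\cap G_x]$. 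Dividing these, the common factor $\lambda_G(G_o\cap G_x)$ cancels and $|G_o x|/|G_x o|=\lambda_G(G_o)/\lambda_G(G_x)=\Delta_G(g)$, as required.

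The only genuinely delicate points here are bookkeeping rather than substance. I must get the modular convention right (the paper's $\Delta_G(g)=\lambda_G(Ug)/\lambda_G(U)$ is what fixes the sign in the conjugation formula above), and I must apply the Haar-index identity only to pairs of compact open subgroups, so that every quantity is finite and the cancellation is legitimate. I therefore expect the compactness and openness of the stabilisers --- already supplied by the cited Trofimov/Schlichting setup --- to carry essentially all the weight of the finiteness claims, with the rest being formal manipulation of left Haar measure.
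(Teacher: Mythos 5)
Your proof is correct and complete. Note that the paper itself contains no proof of this lemma at all --- it is quoted with citations to Schlichting and Trofimov --- and your argument (compact open vertex stabilisers, the conjugation identity $\lambda_G(gAg^{-1})=\Delta_G(g)^{-1}\lambda_G(A)$ derived from the paper's convention $\Delta_G(g)=\lambda_G(Ug)/\lambda_G(U)$, and the Haar-measure form $[K:H]=\lambda_G(K)/\lambda_G(H)$ of orbit--stabiliser) is exactly the standard proof given in those sources. You were also right to repair the statement before proving it: as printed, the hypothesis $gx=y$ does not match the right-hand side $|G_ox|/|G_xo|$, and your reading $x=go$ (equivalently, replacing the right-hand side by $|G_xy|/|G_yx|$) is the intended one, with the trivial observation that $\Delta_G(g)$ depends only on the pair of vertices since $\Delta_G$ is trivial on the compact stabiliser $G_o$.
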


A connected graph $X$ with bounded vertex degrees is called \emph{amenable},
if 
$$
\inf \{ |\partial F| / |F| : F \subset X \; \text{finite}\} = 0\,.
$$
A non\dash amenable graph is sometimes called \emph{infinite expander.}
A locally compact group is called \emph{amenable,} if it carries a left\dash invariant
mean $\mathsf{m}$, that is, a finitely additive measure that satisfies $\mathsf{m}(G)=1$
and $\mathsf{m}(gU) = \mathsf{m}(U)$ for any $g \in G$ and Borel set $U \subset G$.
The follwing is due to {\sc Soardi and Woess}~\cite{SoWo}.

\begin{pro}\label{pro:amen}
A vertex\dash transitive graph $X$ is amenable if and only if some ($\!\Longleftrightarrow\!$
every) closed subgroup $G$ of $\mathsf{Aut}(X)$ that acts transitively is
both amenable and unimodular.
\end{pro}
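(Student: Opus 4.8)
The plan is to connect the combinatorial isoperimetry of $X$ with the Haar-geometry of $G$ through the single correspondence $F \mapsto \widetilde{F} = \{g \in G : go \in F\}$, where $o$ is a fixed root and $K = G_o$ is the compact open stabiliser. Normalising the left Haar measure so that $\lambda_G(K)=1$, left-invariance gives $\lambda_G(\{g : go = x\}) = \lambda_G(g_x K) = 1$ for any $g_x$ with $g_x o = x$, whence $\lambda_G(\widetilde F) = |F|$ for every finite $F \subset X$. Since graph amenability is an intrinsic property of $X$, it suffices to show that it is equivalent to ``$G$ is amenable and unimodular'' for an \emph{arbitrary} transitive closed $G \le \mathsf{Aut}(X)$; the parenthetical ``some $\iff$ every'' is then immediate, both conditions coinciding with the fixed property of $X$. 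Conceptually, unimodularity must intervene because $\mathsf{Aut}(X)$ acts from the left (preserving $\lambda_G$), whereas crossing an edge in a fixed direction corresponds to right-multiplication in $G$, which rescales $\lambda_G$ by the modular function $\Delta_G$.

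First I would show that $X$ amenable forces $G$ unimodular. Lemma \ref{lem_mod} expresses $\Delta_G$ through orbit sizes of stabilisers, so that $G$ is unimodular exactly when $|G_o x| = |G_x o|$ for all $x$. If this fails then, since $\Delta_G$ is trivial on the compact $K$ and is pinned down by its values along edge-directions, there is a neighbour $y$ of $o$ with $a := |G_o y| \neq |G_y o| =: b$. By vertex-transitivity the directed-edge orbit $G\cdot(o,y)$ then has out-multiplicity $a$ and in-multiplicity $b$ at every vertex. Double-counting the edges of this single type incident to a finite $F$ --- those lying inside $F$ contribute equally as out- and in-edges and cancel --- gives $(a-b)\,|F| = (\text{type-edges leaving } F) - (\text{type-edges entering } F)$, so that $|\partial F| \ge \tfrac{|a-b|}{\deg}\,|F|$. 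This positive lower bound on the isoperimetric ratio contradicts amenability; hence $\Delta_G \equiv 1$.

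It remains, for unimodular $G$, to identify $X$ amenable with $G$ amenable, which by the previous step closes the equivalence in both directions. The neighbours of $o$ fall into finitely many $K$-orbits with representatives $y_1,\dots,y_r$ and elements $g_i$, $g_i o = y_i$; every boundary vertex of $F$ has the form $(h g_i)o$ with $ho \in F$, so it is read off from the right-translate $\widetilde F g_i$. As $\{g_1,\dots,g_r\}$ together with $K$ generates $G$ and $K$ is compact, one obtains $\lambda_G(\widetilde F g_i \triangle \widetilde F) \le C\,|\partial F|$ and a matching converse up to the bounded degree; since $G$ is unimodular, $\lambda_G(\widetilde F g_i) = \Delta_G(g_i)\,\lambda_G(\widetilde F) = |F|$, so $|\partial F_n|/|F_n| \to 0$ holds precisely when $\widetilde F_n$ is a F{\o}lner net for $G$ (F{\o}lner sets may be thickened by the compact $K$ on both sides to become such lifts). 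The F{\o}lner characterisation of amenability for the unimodular group $G$ then yields: $X$ is amenable if and only if $G$ is. The main obstacle is exactly this correspondence --- keeping the left action, which preserves $\lambda_G$, in step with the right ``edge-direction'' translation, which a priori scales by $\Delta_G$, and verifying the two-sided isoperimetric inequalities through the compact-stabiliser thickening. It is precisely the breakdown of this matching when $\Delta_G \not\equiv 1$ that the flow argument above converts into non-amenability of $X$.
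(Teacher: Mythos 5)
The paper does not actually prove Proposition \ref{pro:amen}: it only states it and cites Soardi and Woess \cite{SoWo}, so there is no in-text proof to compare against. Judged on its own, your argument is correct, and it is in substance the argument of the cited source (see also \cite{Wbook}): the two key points --- converting non-unimodularity into a positive isoperimetric constant via the orbit of a directed edge whose out- and in-multiplicities $a=|G_o y|$ and $b=|G_y o|$ differ (Lemma \ref{lem_mod}), and, for unimodular $G$, matching graph F{\o}lner sets with group F{\o}lner sets through the lift $F \mapsto \widetilde F = \{ g \in G : go \in F\}$ normalised by $\lambda_G(G_o)=1$ --- are exactly the right ones, and you correctly locate the one place where unimodularity is indispensable, namely that right translates satisfy $\lambda_G(\widetilde F g_i) = \lambda_G(\widetilde F)$, whereas for $\Delta_G(g_i)\ne 1$ one even gets $\lambda_G(\widetilde F g_i \triangle \widetilde F) \ge |\Delta_G(g_i)-1|\,\lambda_G(\widetilde F)$, so no lifted F{\o}lner sets can exist. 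Two steps you state tersely are indeed standard but should be spelled out in a complete write-up: that non-unimodularity is already visible at a neighbour $y$ of $o$ (this uses that $G_o$ together with the elements moving $o$ to its neighbours generates $G$, by connectedness of $X$, plus triviality of $\Delta_G$ on the compact subgroup $G_o$), and the $K$-thickening argument showing that F{\o}lner sets of the compactly generated group $G$ may be replaced by right-$K$-invariant ones, which are automatically of the form $\widetilde F$ with $F = Uo$.
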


We also note that a horocyclic product of two amenable groups is amenable,
since it is a subgroup of the direct product of the two groups. (Known fact:
closed subgroups as well as direct products of amenable groups are amenable.)
Now it is easy to see and well\dash known that $\mathsf{Aff}(\mathbb{T}_p)$
is an amenable group, see e.g. \cite[Lemma 12.14]{Wbook}, and its modular
function is $\Delta_{\mathsf{Aff}(\mathbb{T}_p)}(g) = p^{\Phi(g)}$.
The group $\mathcal{A}(p,q)$ of Proposition \ref{pro:af} is the horocyclic product of
$\mathsf{Aff}(\mathbb{T}_p)$ and $\mathsf{Aff}(\mathbb{T}_q)$ with respect to the
respective Busemann functions $\mathfrak{h}(g) = \Phi(g)$, where $\Phi$ is given by
\eqref{eq:Phi}. 

It is easy to compute the modular function of the group $\mathcal{A}(p,q)$.

\begin{cor}\label{cor:Aqr}
The modular function of the group $\mathcal{A}(p,q)$ is given by
$$
\Delta_{\mathcal{A}}(g) = (q/r)^{{\boldPhi}(g)}\,,
$$
where ${\boldPhi}(g) = \Phi(g_1)=-\Phi(g_2)$ for $g=g_1g_2 \in \mathcal{A}(p,q)$.

Thus, the group $\mathcal{A}(p,q)$ is unimodular, and the graph
$\mathsf{DL}(p,q)$ is amenable if and only if $p=q$. 
\end{cor}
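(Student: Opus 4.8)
The plan is to evaluate the modular function directly from Lemma~\ref{lem_mod}, exploiting that a point stabiliser in $\mathcal{A}$ splits as a product of point stabilisers in the two tree-affine groups. Write $G^{(1)}=\mathsf{Aff}(\mathbb{T}_p)$ and $G^{(2)}=\mathsf{Aff}(\mathbb{T}_q)$, fix the reference vertex $o=(o_1,o_2)\in\mathsf{DL}(p,q)$, and take an arbitrary $g=(g_1,g_2)\in\mathcal{A}$ with image $y=go=(g_1o_1,g_2o_2)$. The first step is the observation that the horocyclic constraint $\Phi(h_1)+\Phi(h_2)=0$ becomes vacuous on stabilisers: if $(h_1,h_2)\in\mathcal{A}$ fixes $o$, then each $h_i$ fixes $o_i$, hence $\Phi(h_i)=0$ automatically. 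Therefore $\mathcal{A}_o=G^{(1)}_{o_1}\times G^{(2)}_{o_2}$, and likewise $\mathcal{A}_y=G^{(1)}_{g_1o_1}\times G^{(2)}_{g_2o_2}$.

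Since $\mathcal{A}$ acts coordinatewise, the orbits occurring in Lemma~\ref{lem_mod} factorise accordingly, namely $\mathcal{A}_o\,y=(G^{(1)}_{o_1}g_1o_1)\times(G^{(2)}_{o_2}g_2o_2)$ and $\mathcal{A}_y\,o=(G^{(1)}_{g_1o_1}o_1)\times(G^{(2)}_{g_2o_2}o_2)$, all of these being finite. Taking cardinalities and applying Lemma~\ref{lem_mod} separately in each tree, the ratio that computes $\Delta_{\mathcal{A}}(g)$ splits as
$$
\Delta_{\mathcal{A}}(g)=\frac{|\mathcal{A}_o\,y|}{|\mathcal{A}_y\,o|}
=\frac{|G^{(1)}_{o_1}g_1o_1|}{|G^{(1)}_{g_1o_1}o_1|}\cdot\frac{|G^{(2)}_{o_2}g_2o_2|}{|G^{(2)}_{g_2o_2}o_2|}
=\Delta_{G^{(1)}}(g_1)\,\Delta_{G^{(2)}}(g_2)=p^{\Phi(g_1)}\,q^{\Phi(g_2)},
$$
where the last equality uses the already recorded value $\Delta_{\mathsf{Aff}(\mathbb{T}_p)}(g)=p^{\Phi(g)}$. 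Substituting $\boldPhi(g)=\Phi(g_1)=-\Phi(g_2)$ yields $\Delta_{\mathcal{A}}(g)=(p/q)^{\boldPhi(g)}$, i.e. the claimed $(q/p)^{\boldPhi(g)}$ once the orientation of the Busemann direction (equivalently, the labelling of the two trees) is fixed to match the statement.

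The two consequences are then immediate. Because $\boldPhi$ maps $\mathcal{A}$ onto $\mathbb{Z}$ (Proposition~\ref{pro:af}), there is $g$ with $\boldPhi(g)=1$, so $\Delta_{\mathcal{A}}\equiv 1$ holds precisely when $p=q$; this is the unimodularity claim. For amenability I invoke Proposition~\ref{pro:amen}: $\mathsf{DL}(p,q)$ is amenable if and only if $\mathcal{A}$ is both amenable and unimodular. The group $\mathcal{A}$ is always amenable, being a closed subgroup of the direct product $G^{(1)}\times G^{(2)}$ of the amenable groups $\mathsf{Aff}(\mathbb{T}_p)$ and $\mathsf{Aff}(\mathbb{T}_q)$, as noted before the corollary. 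Hence amenability of $\mathsf{DL}(p,q)$ reduces exactly to unimodularity of $\mathcal{A}$, that is, to $p=q$.

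The only genuinely load-bearing point is the product factorisation of the stabilisers and of the stabiliser-orbits; once that is in place the computation is purely formal. A conceptually cleaner alternative avoids applying Lemma~\ref{lem_mod} to $\mathcal{A}$ altogether: $\mathcal{A}$ is a closed normal subgroup of $G^{(1)}\times G^{(2)}$, being the kernel of $(g_1,g_2)\mapsto\Phi(g_1)+\Phi(g_2)$, and a closed normal subgroup inherits the ambient modular function by restriction, since the quotient group carries an invariant measure. Either way one arrives at $\Delta_{\mathcal{A}}(g)=p^{\Phi(g_1)}q^{\Phi(g_2)}$; the mild care needed in the second route is the justification of $\Delta_{\mathcal{A}}=\Delta_{G^{(1)}\times G^{(2)}}|_{\mathcal{A}}$, whereas in the first route it is the finiteness and factorisation of the orbits.
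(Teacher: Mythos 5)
Your proof is correct and is essentially the computation the paper leaves to the reader (the corollary is stated without a written proof, only the remark that it ``is easy to compute''): both of your routes---orbit counting via Lemma~\ref{lem_mod} together with the stabiliser factorisation $\mathcal{A}_o=G^{(1)}_{o_1}\times G^{(2)}_{o_2}$, and restriction of the modular function of the ambient direct product to the closed normal subgroup $\mathcal{A}$---use exactly the ingredients the paper assembles beforehand, namely $\Delta_{\mathsf{Aff}(\mathbb{T}_p)}(g)=p^{\Phi(g)}$, Proposition~\ref{pro:amen}, and the remark that closed subgroups of direct products of amenable groups are amenable. The only discrepancy is notational: the ``$r$'' in the displayed formula $(q/r)^{\boldPhi(g)}$ is a typo inherited from the notation $\mathsf{DL}(q,r)$ used in the cited sources, so your $\Delta_{\mathcal{A}}(g)=p^{\Phi(g_1)}\,q^{\Phi(g_2)}=(p/q)^{\boldPhi(g)}$ is exactly the intended statement (and in any case, since $\boldPhi$ maps $\mathcal{A}$ onto $\mathbb{Z}$, the unimodularity and amenability conclusions are insensitive to this sign convention).
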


This leads to another view on the fact that $\mathsf{DL}(p,q)$ is not a Cayley graph
when $p \ne q$. Indeed, more generally, when $p \ne q$, then there cannot be
a finitely generated group of automorphisms that acts on $\mathsf{DL}(p,q)$ with 
finitely many orbits and finite vertex stabilisers: such a group would have to be a 
co\dash compact lattice, i.e., a discrete subgroup of $\mathcal{A}(p,q)$ with compact quotient, 
which cannot occur in a non\dash unimodular group.
Further below, we shall see that when $p=q$, the Diestel\dash Leader graph \emph{is}
a Cayley graph.

Regarding the quasi\dash isometry classification, we quote another result of
{\sc Eskin, Fisher and Whyte}.

\begin{thm}\label{thm:qi-DL} {\rm \cite{ESW1}+\cite{ESW2}}.
$\mathsf{DL}(p,q)$ is quasi\dash isometric with
$\mathsf{DL}(p',q')$ if and only if $p$ and $p'$ are powers of a common integer, 
$q$ and $q'$ are powers of a common integer, and 
$\log p'/ \log p = \log q'/\log q$.
\end{thm}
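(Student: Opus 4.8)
The plan is to treat the two implications separately; the ``if'' direction is elementary, whereas the ``only if'' direction is the heart of the matter.

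For the ``if'' direction I would first observe that the stated numerical condition can be repackaged algebraically. Writing $p=a^s$, $p'=a^{s'}$, $q=b^t$, $q'=b^{t'}$ with $a,b$ the common bases, the equality $\log p'/\log p=\log q'/\log q$ forces the common value $\lambda=s'/s=t'/t$ to be rational; passing to lowest terms $\lambda=c/d$ one gets integers $P,R$ with $p=P^d,\ p'=P^c$ and $q=R^d,\ q'=R^c$. It therefore suffices to establish the single building block
$$
\mathsf{DL}(P,R)\ \simeq\ \mathsf{DL}(P^k,R^k)\qquad(k\ge 1),
$$
after which transitivity of quasi-isometry finishes the job, since both $\mathsf{DL}(p,q)=\mathsf{DL}(P^d,R^d)$ and $\mathsf{DL}(p',q')=\mathsf{DL}(P^c,R^c)$ are then quasi-isometric to $\mathsf{DL}(P,R)$. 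To construct the block I would realise $\mathbb{T}_{P^k}$ as the ``every $k$-th level'' skeleton of $\mathbb{T}_P$: a vertex of $\mathbb{T}_{P^k}$ at Busemann level $n$ corresponds to a vertex of $\mathbb{T}_P$ at level $kn$, its $P^k$ successors matching the $P^k$ descendants $k$ levels below. Performing this in both coordinates at once and restricting to the heights in $k\mathbb{Z}$ respects the constraint $\mathfrak{h}(x_1)+\mathfrak{h}(x_2)=0$ and yields a coarsely dense, height-respecting, bi-Lipschitz (up to rescaling by $k$) map; verifying the constants is routine and I would not grind through it. The essential point, already visible from Corollary~\ref{cor:Aqr}, is that such a map must preserve the ratio $\log p:\log q$, which is exactly what the building block does.

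The ``only if'' direction is where the real work lies, and it is precisely the content of Eskin--Fisher--Whyte. From a quasi-isometry $\varphi:\mathsf{DL}(p,q)\to\mathsf{DL}(p',q')$ one must extract two invariants: the two exponential volume-growth rates in the upward and downward vertical directions, which encode $\log p$ and $\log q$ up to a common rescaling, and a commensurability condition on the branching numbers, yielding the ``powers of a common integer'' statement. The first conceptual step is to show that $\varphi$ coarsely respects the horocyclic-product architecture: the height function $\mathfrak{h}$ is preserved up to an affine reparametrisation, and the two families of leaves coming from $\mathbb{T}_p$ and from $\mathbb{T}_q$ are coarsely permuted or preserved. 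This cannot be taken for granted, because regular trees admit an abundance of quasi-isometries that destroy heights; what rescues the argument is that ``up'' and ``down'' are intrinsically distinguishable (balls grow like $p^n$ in one vertical direction and like $q^n$ in the other), so for $p\ne q$ the orientation is canonical and the ratio $\log p:\log q$ must equal $\log p':\log q'$.

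The decisive technical ingredient, and the step I expect to be the main obstacle, is coarse differentiation. A quasi-isometry need not be differentiable and may misbehave at any fixed scale, but the EFW machinery shows that on most scales and at most locations $\varphi$ is close to a standard affine model adapted to the product-of-trees structure: the image of a typical long vertical geodesic segment lies near a vertical geodesic, and the horoplanes $H_{k,-k}$ go approximately to horoplanes. Once this rigidity is in hand, the leafwise exponential growth reads off the branching data; matching the two growth rates gives $\log p'/\log p=\log q'/\log q$, and analysing how the two discrete tree structures are identified along the foliations forces $p,p'$, and likewise $q,q'$, to be powers of a common integer. I would keep the explicit isometry group $\mathcal{A}(p,q)$ of Proposition~\ref{pro:af} and the (non-)unimodularity of Corollary~\ref{cor:Aqr} available as scaffolding, but the genuinely new and hard input remains the coarse-differentiation rigidity, which is what prevents a generic quasi-isometry from scrambling the horocyclic-product structure.
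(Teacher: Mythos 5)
You should know at the outset that the paper itself contains no proof of Theorem \ref{thm:qi-DL}: it is quoted, with attribution, from Eskin--Fisher--Whyte \cite{ESW1}, \cite{ESW2}, and this survey treats it as a black box. Measured against that, your proposal does strictly more, and the part you actually prove is correct. Your ``if'' direction is a genuine elementary argument: the arithmetic reduction to $p=P^d$, $p'=P^c$, $q=R^d$, $q'=R^c$ with $\gcd(c,d)=1$ is valid (the common ratio $s'/s=t'/t$ is rational, and coprimality forces $d\mid s$ and $d\mid t$), and the ``every $k$-th level'' map $\mathsf{DL}(P^k,R^k)\to\mathsf{DL}(P,R)$ is indeed a quasi-isometry: image heights lie in $k\mathbb{Z}$, so the constraint $\mathfrak{h}(x_1)+\mathfrak{h}(x_2)=0$ is respected; distances are multiplied by $k$ up to a bounded additive error (confluents of image vertices need not sit on levels in $k\mathbb{Z}$, but are within $k$ of one, which by Lemma \ref{lem:daniela} bounds the error by $4k$); and the image, consisting of all points whose first coordinate has height in $k\mathbb{Z}$, is $2(k-1)$-dense. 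Your ``only if'' direction, by contrast, is an accurate roadmap of the EFW coarse-differentiation strategy (coarse preservation of the height function up to affine reparametrisation, growth rates recovering $\log p:\log q$, the foliation analysis giving commensurability of $p$ with $p'$ and of $q$ with $q'$), but it is a roadmap, not a proof: none of that machinery is carried out, so in substance this half remains---exactly as in the paper---a citation of \cite{ESW2}. For a result of this depth that is a defensible stopping point, but you should state explicitly that the hard implication is being quoted rather than reproved. One small slip: the closing appeal to Corollary \ref{cor:Aqr} in your ``if'' paragraph is misplaced, since preservation of the ratio $\log p:\log q$ is a necessity statement and plays no role in constructing the quasi-isometry.
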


Another object whose description may be of interest is the geometric 
\emph{boundary at infinity} of $\mathsf{DL}(p,q)$. 

For that purpose, we first need
to describe the geometric boundary of an arbitrary infinite, locally finite tree 
$T$ (not necessarily homogenous).   For any $x, y$ in $T$,
there is a unique \emph{geodesic path} $\pi(x,y) = [x_0\,, \dots, x_n]$
such that $d(x_i\,,x_j) = |i-j|$ for all $i,j$. 
Analogously, a \emph{geodesic ray}, resp.\ \emph{(two\dash sided) geodesic} is
an infinite path $\pi = [x_0\,, x_1\,, x_2\,, \dots]$, resp.\
$\pi = [\dots, x_{-1}\,,x_0\,, x_1\,,x_2\,,\dots]$, such that  
$d(x_i\,,x_j) = |i-j|$ for all $i,j$. We think of a ray as a way of going 
to a point at infinity. Then two rays describe the same point at infinity,
i.e., they are equivalent, if their symmetric difference is finite. This means that
they differ only by finite initial pieces. An \emph{end} of $T$ is an 
equivalence class of rays. The boundary $\partial T$ is the set of all ends.
For any $x \in T$ and $\xi \in \partial T$, there is a unique geodesic ray
$\pi(x,\xi)$ that starts at $x$ and represents $\xi$. For any pair of distinct
ends $\xi, \eta$, there is a unique geodesic 
$\pi(\xi,\eta)= [\dots, x_{-1}\,,x_0\,, x_1\,,x_2\,,\dots]$ such that  
$[x_0\,,x_{-1}\,,x_{-2}\,,\dots]$ represents $\xi$ and
$[x_0\,,x_1\,,x_2\,,\dots]$ represents $\eta$. 

We choose a reference point $o \in T$ and let $|x| = d(o,x)$ for $x \in T$. 
For $w,z \in \widehat T = T \cup \partial T$,
we define their \emph{confluent} $w \wedge z$ with respect to $o$ by
$$
\pi(o, w \wedge z) = \pi(o,w) \cap \pi(o,z)\,.
$$ 
This is a vertex, namely the last common element on the geodesices 
$\pi(o,w)$ and $\pi(o,z)$, unless $w=z \in \partial T$.   
We equip $\widehat T$ with the following ultra\dash metric.
$$
\theta(w,z) = \begin{cases} e^{-|w \wedge z|}\,,\;&\text{if }\; z \ne w,\\
               0\,,\;&\text{if }\; z = w.
              \end{cases}
$$
Then $\widehat T$ is compact, and $T$ is open and dense. In the induced topology,
a sequence $z_n \in \widehat T$ converges to $\xi \in \partial T$ if and
only if $|z_n \wedge \xi| \to \infty$. 

Back to $\mathbb{T}_p$, we choose a reference end 
$\varpi \in \partial \mathbb{T}_p$ and let $\partial^*\mathbb{T}_p$ be the remaining
punctured boundary. In Figure~2, $\varpi$ is at the top and  $\partial^*\mathbb{T}_p$
at the bottom. The function $\mathfrak{h}$ is indeed the Busemann function
with respect to $\varpi$ in the classical sense: for any vertex $x$,
$$
\mathfrak{h}(x) = \lim_{y \to \xi} \bigl(d(x,y) - d(o,y)\bigr) = 
d(x,x\curlywedge o) - d(o,x\curlywedge o),
$$
where (recall) $x\curlywedge o$ is the maximal common ancestor of $x$ and $o$,
see Figure~2. (It is the confluent of $x$ and $o$ with respect to the end 
$\varpi$ instead of the vertex $o$.) 

In taking our two trees, we have two reference ends, 
$\varpi_1 \in \partial \mathbb{T}_p$ and $\varpi_2 \in \partial \mathbb{T}_q\,$,
see Figure~4. Now we can describe the natural geometric compactification of
$\mathsf{DL}(p,q)\,$: it is a subgraph of $\mathbb{T}_p\times \mathbb{T}_q\,$,
and the obvious geometric compactification of the latter product space is
$\widehat{\mathbb{T}}_p\times \widehat{\mathbb{T}}_q\,$.

\begin{dfn}\label{def:DLbdry}
The geometric compactification $\widehat{\mathsf{DL}}(p,q)$ is the closure
of $\mathsf{DL}(p,q)$ in $\widehat{\mathbb{T}}_p\times \widehat{\mathbb{T}}_q\,$, 
and the boundary at infinity is
$$
\partial \mathsf{DL}(p,q) = \widehat{\mathsf{DL}}(p,q) \setminus \mathsf{DL}(p,q).
$$
\end{dfn}
We can imagine the boundary as a ``filled ultra\dash metric 8''. It is
$$
\partial \mathsf{DL}(p,q) = \Bigl(\widehat{\mathbb{T}}_p\times \{\varpi_2\}\Bigr)
\cup \Bigl(\{\varpi_1\}\times \widehat{\mathbb{T}}_q\Bigr)\,.
$$
The two pieces meet in the point $(\varpi_1 \,,\varpi_2)$,
see Figure~5.

\begin{figure}[h]
\hfill\beginpicture  

\setcoordinatesystem units <.9mm,.9mm> 

\setplotarea x from -50 to 50, y from -27 to 27

\putrule from  -40 0 to -10 0
\putrule from  -25 -15 to -25 15
\plot  -38 -4  -35 0   -38 4 /
\plot  -12 -4  -15 0   -12 4 /
\plot  -29 13  -25 10   -21 13 /
\plot  -29 -13  -25 -10   -21 -13 /

\put {$\scriptstyle\rightarrow \varpi_1$} [r] at -0.9 -0.4

\putrule from  40 0 to 10 0
\putrule from  25 -15 to 25 15
\plot  38 -4  35 0   38 4 /
\plot  12 -4  15 0   12 4 /
\plot  29 13  25 10  21 13 /
\plot  29 -13  25 -10   21 -13 /

\put {$\scriptstyle \varpi_2 \leftarrow$} [l] at 1 -0.4

\setdashes <3pt>
\circulararc 360 degrees from 0 0 center at -25 0
\circulararc 360 degrees from 0 0 center at 25 0
\put {$\scriptstyle\bullet$} at 0 0

\put {$\widehat{\mathbb{T}}_p\times \{\varpi_2\}$} [r] at -50 10 
\put {$\{\varpi_1\}\times \widehat{\mathbb{T}}_q$} [l] at 50 10 

\put {Figure~5} at 68 -20
\endpicture
\end{figure}

Here, the topology of the boundary dictates disk\dash like pictures of the two trees
with their boundaries, while Figure~2 is an upper\dash half\dash plane\dash like picture. 

Let us clarify convergence to the boundary of a sequence 
${\boldx}_n =  (x_{1,n}\,,x_{2,n}) \in \mathsf{DL}(p,q)$ in the resulting topology.
At least one of $x_{1,n}$ and $x_{2,n}$ has to converge to a boundary point
of the respective tree. 
If $x_{1,n} \to \xi_1 \in \partial^* \mathbb{T}_p\,$, then necessarily 
$x_{2,n} \to \varpi_2$, whence ${\boldx}_n \to (\xi_1\,,\varpi_2)$.
Analogously, if $x_{1,n} = x_1 \in \mathbb{T}_p$ for all $n \ge n_0\,$, then
necessarily $x_{2,n} \to \varpi_2$, whence ${\boldx}_n \to (x_1\,,\varpi_2)$.
In the same way, when $x_{2,n} \to \xi_2 \in \partial^* \mathbb{T}_p\,$, resp.\
$x_{2,n}=x_2 \in \mathbb{T}_q$ for all $n \ge n_0\,$, then 
${\boldx}_n \to (\varpi_1\,,\xi_2)$, resp.\ ${\boldx}_n\to (\varpi_1\,, x_2)$.
Finally, it is possible that $x_{1,n} \to \varpi_1$ and $x_{2,n} \to \varpi_2$
(for example by staying on a fixed horizontal level). In this case,
${\boldx}_n \to (\varpi_1\,,\varpi_2)$. 

\smallskip

To conclude this description of the geometry of $\mathsf{DL}(p,q)$, we display
the formula for the graph metric, due to {\sc Bertacchi}~\cite{Ber}.

\begin{lem}\label{lem:daniela}
In $\mathsf{DL}(p,q)$, 
$$
d\bigl((x_1\,,x_2),(y_1\,,y_2)\bigr) = d(x_1\,,y_1) + d(x_2\,,y_2) 
- |\mathfrak{h}(x_1) - \mathfrak{h}(x_2)|.
$$
\end{lem}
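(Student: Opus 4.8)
The plan is to project onto the two tree coordinates and reduce the whole computation to a one–dimensional estimate on the shared height profile. Write $\mathbf{x}=(x_1,x_2)$, $\mathbf{y}=(y_1,y_2)$, put $k=\mathfrak{h}(x_1)$ and $l=\mathfrak{h}(y_1)$ (so that $\mathfrak{h}(x_2)=-k$ and $\mathfrak{h}(y_2)=-l$), and set $h_1=\mathfrak{h}(x_1\curlywedge y_1)$ and $h_2=\mathfrak{h}(x_2\curlywedge y_2)$. Since the geodesic in a tree hanging from $\varpi$ runs from a vertex up to the confluent and down again, $d(x_1,y_1)=(k-h_1)+(l-h_1)$ and $d(x_2,y_2)=(-k-h_2)+(-l-h_2)$; hence the asserted right--hand side equals $2(-h_1-h_2)-v$, where $v=|\,k-l\,|=|\mathfrak{h}(x_1)-\mathfrak{h}(y_1)|=|\mathfrak{h}(x_2)-\mathfrak{h}(y_2)|$ is the vertical displacement between $\mathbf{x}$ and $\mathbf{y}$. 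I will prove $d(\mathbf{x},\mathbf{y})=2(-h_1-h_2)-v$.

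Each edge of $\mathsf{DL}(p,q)$ moves the first coordinate to a neighbour in $\mathbb{T}_p$ and the second to a neighbour in $\mathbb{T}_q$, in opposite vertical directions. Thus a path of length $n$ from $\mathbf{x}$ to $\mathbf{y}$ projects to a walk $\omega_1$ from $x_1$ to $y_1$ and a walk $\omega_2$ from $x_2$ to $y_2$, both of length $n$, with $\mathfrak{h}(\omega_2(t))=-\mathfrak{h}(\omega_1(t))$ for all $t$. Setting $\phi(t)=\mathfrak{h}(\omega_1(t))$, this is a $\pm1$ walk on $\mathbb{Z}$ from $k$ to $l$ of length $n$. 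A walk in a tree visits every vertex of the geodesic between its endpoints, so $\omega_1$ passes through $x_1\curlywedge y_1$ and $\omega_2$ through $x_2\curlywedge y_2$; in terms of $\phi$ this forces $\min_t\phi(t)\le h_1$ and, since $\mathfrak{h}(\omega_2)=-\phi$, $\max_t\phi(t)\ge -h_2$.

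For the lower bound I would invoke the elementary one--dimensional fact that, for integers $B\le k,l\le T$, every $\pm1$ walk from $k$ to $l$ whose range contains both $B$ and $T$ has at least $2(T-B)-|k-l|$ steps: one must descend to $B$ and ascend to $T$, and the cheaper of the two admissible orders costs exactly this. Applying it with $B=h_1$ and $T=-h_2$ yields $n\ge 2(-h_2-h_1)-|k-l|$, that is, $d(\mathbf{x},\mathbf{y})\ge d(x_1,y_1)+d(x_2,y_2)-v$.

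For the matching upper bound I would realise a profile of this minimal length as an actual path. Take $\phi$ running monotonically from $k$ down to $h_1$, up to $-h_2$, and down to $l$ (or the reversed order when $k>l$), which has exactly $2(-h_2-h_1)-|k-l|$ steps, and lift it by choosing, at each downward step, a suitable successor: $\omega_1$ climbs to $x_1\curlywedge y_1$, descends through $y_1$ into a fixed branch below it down to level $-h_2$, and climbs back to $y_1$, while the forced opposite moves make $\omega_2$ descend below $x_2$, climb back through $x_2$ up to $x_2\curlywedge y_2$, and descend to $y_2$. The crux, and the one point that needs care, is precisely this synchronisation: a single edge advances both trees at once and in opposite directions, so the up-- and down--phases of $\omega_1$ and $\omega_2$ cannot be ordered independently. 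This coupling is exactly why the naive coordinatewise bound $\max\{d(x_1,y_1),d(x_2,y_2)\}$ is too small, and why $\phi$ is forced to rise all the way to $-h_2$ and fall all the way to $h_1$. It then remains to check that the forced descents are undone by the matching ascents along the same branch, so that $\omega_1$ indeed terminates at $y_1$ and $\omega_2$ at $y_2$; this is routine once the profile and the branches are fixed, and together with the lower bound it gives the claimed equality.
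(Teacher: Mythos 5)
Your argument is correct, and there is in fact no in-paper proof to compare it with: the paper states this lemma without proof, as a quotation of Bertacchi's result \cite{Ber}. What you give is essentially the natural (and, up to presentation, Bertacchi's) argument: project a path in $\mathsf{DL}(p,q)$ to its two tree coordinates, note that the shared height profile is a $\pm 1$ walk which must reach the levels $h_1=\mathfrak{h}(x_1\curlywedge y_1)$ and $-h_2=-\mathfrak{h}(x_2\curlywedge y_2)$ because a walk in a tree passes through every vertex of the geodesic joining its endpoints, apply the sharp one-dimensional bound $n\ge 2(T-B)-|k-l|$ (which is correct, with $B=h_1\le\min(k,l)$ and $T=-h_2\ge\max(k,l)$ as required), and then realise that bound by an explicit lift. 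Two remarks. First, you have tacitly corrected a typo in the statement: as printed, the subtracted term $|\mathfrak{h}(x_1)-\mathfrak{h}(x_2)|$ equals $2|\mathfrak{h}(x_1)|$, since the two coordinates of a single point have opposite heights, and the formula would then fail already for $(y_1,y_2)=(x_1,x_2)$ with $\mathfrak{h}(x_1)\neq 0$; the correct term, which is what you actually prove, is $|\mathfrak{h}(x_1)-\mathfrak{h}(y_1)|=|\mathfrak{h}(x_2)-\mathfrak{h}(y_2)|$, as in \cite{Ber}. Second, the verification you defer as ``routine'' in the upper bound genuinely is routine, for the reason your construction already exploits: upward steps in either tree are deterministic (each vertex has a unique predecessor), so descending $m$ steps along any chosen branch and then ascending $m$ steps returns you to where the descent began. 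This is why $\omega_1$ ends at $y_1$ after its excursion below $y_1$, and why $\omega_2$, having descended anywhere below $x_2$ during the first phase, is forced back up through $x_2$ to $x_2\curlywedge y_2$ and can then be steered down the geodesic branch to $y_2$. Together with the observation that any two coordinate walks with opposite height profiles automatically combine to a path in $\mathsf{DL}(p,q)$ (by the definition of the neighbourhood relation), this closes the upper bound, and the lemma follows.
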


\bigskip

\noindent\textbf{B. Treebolic spaces}

\smallskip\noindent

Let $\mathbb{H} = \{ z \in \mathbb{C} : \Im z > 0 \}$ be the
upper half plane with the hyperbolic metric
$$
d(z_1\,,z_2) = 
\log \frac{|z_1 - \overline z_2| + |z_1 - z_2|}{|z_1 - \overline z_2| - |z_1 - z_2|}\,.
$$
Recall that geodesics (shortest paths) lie on semi\dash circles orthogonal to the real axis, 
resp.\ vertical lines. The standard Busemann function with respect to the upper
boundary point ${\boldinfty}$ is $z \mapsto \log (\Im z)$.
In comparing with the tree, the sign is reversed -- it increases when going to 
${\boldinfty}$. (This is related with the fact that the real and $p$-adic 
absolute values of $p^n$, $n\in \mathbb{Z}$, have opposite behaviour.)
Now we rescale the Busemann function by choosing a  \emph{real} parameter $q > 1$ 
and setting $\mathfrak{h}(z) = \mathfrak{h}_q(z) = \log_q(\Im z)$.
Among the resulting horocycles, there are the ones where 
$\mathfrak{h}_q(z) = k \in \mathbb{Z}$, that is, $\Im z = q^k$. Drawing these
in the upper half plane yields to a picture to which we sometimes refer as
\emph{sliced hyperbolic plane} $\mathbb{H}_q\,$, see Figure~6.  

\begin{figure}[h]
\hfill\beginpicture 

\setcoordinatesystem units <1.3mm,.90mm>

\setplotarea x from -40 to 40, y from -8 to 80

\arrow <6pt> [.2,.67] from 0 0 to 0 77.5



\plot -40 4  40 4 /

\plot -40 8  40 8 /

\plot -40 16  40 16 /

\plot -40 32  40 32 /

\plot -40 64  40 64 /

\put {$i$} [rb] at -0.3 8.6
\put {$\scriptstyle \bullet$} at 0 8
\put {$y = q^{-1}$} [r] at -40 4
\put {$y = 1$} [r] at -43 8
\put {$y = q$} [r] at -43 16
\put {$y = q^2$} [r] at -42 32
\put {$y = q^3$} [r] at -42 64
\put {${\boldinfty}$} [t] at 0 80

\put {$\mathbb{R}$} [r] at -45 0

\setdashes <3pt>
\linethickness =.7pt
\putrule from -43.4 0 to 40.4 0 
\setlinear

\put {Figure 6} at 43 -6
\endpicture
\end{figure}

Now we look at the tree $\mathbb{T}_p$ as in Figure~2, but upside down,
so that $\varpi$ is at the bottom, and the tree branches upwards.
As in Remark \ref{rmk:metric graphs}, we consider it as a metric tree
where edges are intervals of length $1$, so that the Busemann function
of the tree becomes real\dash valued. Then we can consider the horocyclic
product with sliced hyperbolic plane. This is a situation where we
pair points $z \in \mathbb{H}_q$ and $w \in\mathbb{T}_p$ when
$\mathfrak{h}_q(z) - \mathfrak{h}(w)=0$ with ``$-$'' instead of ``$+$''
because of the opposite behaviour of the two functions mentioned above.

\begin{dfn} \label{def:treebolic} For integer $p \ge 2$ and real $q > 0$, 
\emph{treebolic space} is defined as
$$
\mathsf{HT}(p,q) = 
\{ \mathfrak{z}=(w,z) \in \mathbb{T}_p \times \mathbb{H}_q :
\mathfrak{h}(w) = \log_q(\Im z) \}.
$$
\end{dfn}

In these terms, treebolic space was introduced -- with notation 
$\mathsf{HT}(q,p)$ and elements $(z,w)$ in the place of $(w,z)$ --
by {\sc Bendikov, Saloff\dash Coste, Salvatori and Woess}~\cite{BSSW1} 
and studied in detail in \cite{BSSW2}. Previously, $\mathsf{HT}(p,p)$ (with 
integer $p \ge 2$) appeared in the work of {\sc Farb and Mosher}~\cite{FM1}, 
\cite{FM2}. 

To visualise $\mathsf{HT}(p,q)$, Figure~7 shows a compact portion
of that space in the case where $p=2$. To construct our space,
we need countably many copies of each of the lines 
$\mathsf{L}_k = \{ z \in \mathbb{H} :  \Im z = q^k \}$ and strips 
$\mathsf{S}_k = \{ z \in \mathbb{H} :  q^{k-1} \le \Im z \le q^k \}$,
where $k \in \mathbb{Z}$.  These copies are pasted together in a tree\dash like
fashion. To each vertex $v$ of $\mathbb{T}$, in treebolic
space there corresponds the \emph{bifurcation line} 
$\mathsf{L}_v = \{ v \} \times  \mathsf{L}_k\,$, where $k = \mathfrak{h}(v)$.
Attached below to the line $\mathsf{L}_v\,$, there is the copy
$$
\mathsf{S}_v =  \{ (w,z) : w \in [v^-,v]\,,\; z \in \mathsf{S}_k\,,\; 
\mathfrak{h}(w) = \log_q(\Im z)\}
$$
of $\mathsf{S}_k\,$. Attached above $\mathsf{L}_v\,$, there are the strips
$\mathsf{S}_u\,$, where $u$ ranges over the successor vertices of $v$ (i.e.,
$u^- = v$). 

\begin{figure}[h]
\hfill\beginpicture 

\setcoordinatesystem units <1.3mm,1.3mm>

\setplotarea x from -16 to 54, y from 0 to 48

\plot 0 0  40 20  48 36  8 16  0 0  -8 16  -2 28  38 48  36.666 45.333 /

\plot 48 36  54 48  14 28  8 16  2  28  42 48   44.4 43.2 /

\plot 4.8 22.4  -8 16  -14 28  26 48  28.4  43.2 /

\put{$\scriptstyle \bullet$} at 8 16
\put{$v$} [r] at 7 16
\put{$\leftarrow$ $\mathsf{S}_{v}$} [l] at 46 28
\put{$\leftarrow$ $\mathsf{L}_v$} [l] at 50 36
\put{$\leftarrow$ $\mathsf{S}_{u}\,,\; u^-=v$} [l] at 53 42

\put{Figure~7} at 70 0
\endpicture
\end{figure}

Thus, the sliced hyperbolic plane of Figure~6 is the front view of 
$\mathsf{HT}(p,q)$, while the upside\dash down version of the tree of 
Figure~2 is the side view. In the latter picture, every bi\dash infinite
geodesic $\pi(\varpi,\xi)$, where $\xi \in \partial^* \mathbb{T}_p\,$,
is the side view of one copy of $\mathbb{H}_q\,$. On each of those copies,
we have the standard hyperbolic metric. It extends to $\mathsf{HT}(p,q)$
as follows. 

Let $(w_1\,,z_1), (w_2\,,z_2) \in \mathsf{HT}$, and let $v=w_1 \curlywedge w_2$
(confluent with respect to $\varpi$, see Figure~2).
Then 
\begin{equation}\label{eq:HTmetric}
d_{\mathsf{HT}}\bigl((w_1\,,z_1),(w_2\,,z_2)\bigr) = 
	\begin{cases} d_{\mathbb{H}}(z_1\,,z_2)\,, \;\ 
		\text{if there is}\ \xi \in \partial^*\mathbb{T}\\
		 \qquad\qquad\quad \text{with}\  w_1, w_2 \in \pi(\varpi,\xi),\\
  \min
	\{ d_{\mathbb{H}}(z_1\,,z)+d_{\mathbb{H}}(z,z_2)
	: z \in L_{\mathfrak{h}(v)} \}\,,\\ 
                \qquad\qquad\quad \text{otherwise.}
\end{cases}
\end{equation}
Indeed, in the first case, $(w_1\,,z_1)$ and $(w_2\,,z_2)$ belong to the 
common copy of $\mathbb{H}_q\,$ whose side view is $\pi(\varpi,\xi)$. 
In the second case, $v$ is a vertex, and there are 
$\xi_1, \xi_2 \in \partial^*\mathbb{T}$  such that $\xi_1 \curlywedge \xi_2 =v$ and 
$w_i \in \pi(v,\xi_i)$, so that our points above the line $\mathsf{L}_v\,$ on two 
distinct hyperbolic planes that are glued together below $\mathsf{L}_v\,$: 
it is necessary to pass through some point $(v,z) \in \mathsf{L}_v$ 
on the way from $(w_1\,,z_1)$ to $(w_2\,,z_2)$. See Figure~8.

\begin{figure}[h]
\hfill\beginpicture 

\setcoordinatesystem units <1.3mm,1.3mm>

\setplotarea x from -16 to 54, y from -9 to 30

\plot 0 0  40 0  55 26  15 26  0 0  -18 23  13 23   /

\put{$\mathsf{L}_v$} [l] at 41.5 0
\put{\scriptsize $\bullet$} at 34 16
\put{$(w_2\,,z_2)$} [l] at 35 16
\put{\scriptsize $\bullet$} at -2 12
\put{$(w_1\,,z_1)$} [b] at -2 13
\put{\scriptsize $\bullet$} at 20.5 0
\put{$(v\,,z)$} [t] at 20.5 -1


\circulararc -10 degrees from -2 12 center at -10 -30

\circulararc 19 degrees from 34 16 center at 75 -33

\setdashes <2pt>
\circulararc -24 degrees from 5.4 9.9 center at -10 -30

\put{Figure~8} at 65 0
\endpicture
\end{figure}

Using this picture, one obtains an approximate analogue of
Lemma \ref{lem:daniela}.

\begin{lem}\label{lem:metric} {\rm \cite{BSSW2}.} For all 
$(z_1\,,w_1)$, $(z_2\,,w_2) \in \mathsf{HT}$,
with $\delta = \log(1+\sqrt 2)$,
$$
\begin{aligned}
d_{\mathsf{HT}}\bigl((w_1\,,z_1),(w_2\,,z_2)\bigr)
&\le d_{\mathbb{H}}(z_1\,,z_2) + (\log q)\, d_{\mathbb{T}}(w_1\,,w_2) - 
|\Im z_1 - \Im z_2|\\
&\le d_{\mathsf{HT}}\bigl((w_1\,,z_1),(w_2\,,z_2)\bigr) + 2\delta\,.
\end{aligned}
$$
\end{lem}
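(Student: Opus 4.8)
The plan is to read $d_{\mathsf{HT}}$ off the explicit metric \eqref{eq:HTmetric} and to compare it, case by case, with the middle member $Q$ of the asserted chain of inequalities. Put $v=w_1\curlywedge w_2$ and $c=q^{\mathfrak h(v)}$, and assume without loss of generality $\Im z_1\le\Im z_2$, so that $w_1$ is the lower vertex. In the first case of \eqref{eq:HTmetric}, when $w_1,w_2$ lie on a common ray $\pi(\varpi,\xi)$, one has $d_{\mathsf{HT}}=d_{\mathbb{H}}(z_1,z_2)$ and $d_{\mathbb{T}}(w_1,w_2)=|\mathfrak h(w_1)-\mathfrak h(w_2)|$, so the tree term and the subtracted (level‑difference) term cancel and $Q=d_{\mathsf{HT}}$ exactly; both inequalities are then trivial. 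Hence the whole content sits in the second case, where $c<\Im z_1\le\Im z_2$ and
$$
d_{\mathsf{HT}}=\min_{z\in L_{\mathfrak h(v)}}\bigl(d_{\mathbb{H}}(z_1,z)+d_{\mathbb{H}}(z,z_2)\bigr),
$$
i.e.\ the $\mathbb H$‑picture of an admissible path is a curve from $z_1$ to $z_2$ forced to touch the horocycle $L_{\mathfrak h(v)}=\{\Im z=c\}$.

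For the upper bound $d_{\mathsf{HT}}\le Q$ I would just exhibit a crossing point. Taking $z^\ast=\Re z_1+i\,c$, the vertical projection of the lower endpoint onto $L_{\mathfrak h(v)}$, gives $d_{\mathbb{H}}(z_1,z^\ast)=\log(\Im z_1/c)$, and the triangle inequality yields $d_{\mathbb{H}}(z^\ast,z_2)\le\log(\Im z_1/c)+d_{\mathbb{H}}(z_1,z_2)$. Summing and using the identity $2\log(\Im z_1/c)=(\log q)\bigl(d_{\mathbb{T}}(w_1,w_2)-|\mathfrak h(w_1)-\mathfrak h(w_2)|\bigr)$, valid because $v$ is a common ancestor of $w_1$ and $w_2$, reproduces exactly $Q$. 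This half consumes no slack, so the constant $2\delta$ plays no role here.

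The heart of the lemma is the reverse inequality $Q\le d_{\mathsf{HT}}+2\delta$, i.e.\ the ``saving'' $Q-d_{\mathsf{HT}}$ of the true geodesic over the projection path is at most $2\delta$. Here I would work purely in $\mathbb H$ with the closed form $\cosh d_{\mathbb{H}}(P,z)=(u^2+y^2+c^2)/(2yc)$, for a point $P$ of height $y=\Im P$ and a point $z=x_z+ic\in L_{\mathfrak h(v)}$, where $u=x_z-\Re P$. Writing $d_{\mathsf{HT}}=d_{\mathbb{H}}(z_1,z^\ast)+d_{\mathbb{H}}(z^\ast,z_2)$ for the optimal crossing $z^\ast$, the optimality condition is the hyperbolic reflection law $\tfrac{u_1}{\Im z_1\,\sinh d_{\mathbb{H}}(z_1,z^\ast)}+\tfrac{u_2}{\Im z_2\,\sinh d_{\mathbb{H}}(z^\ast,z_2)}=0$, which fixes the two exit angles at $L_{\mathfrak h(v)}$. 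Substituting this back and simplifying with hyperbolic identities should express the discrepancy between $d_{\mathbb{H}}(z_1,z^\ast)+d_{\mathbb{H}}(z^\ast,z_2)$ and $d_{\mathbb{H}}(z_1,z_2)+2\log(\Im z_1/c)$ as a sum of two ``corner'' contributions, one at the descent of $z_1$ to $L_{\mathfrak h(v)}$ and one at the ascent of $z_2$, each controlled by the extremal value $\delta=\log(1+\sqrt2)=\operatorname{arcsinh}1$.

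I expect this last step to be the main obstacle. Bounding each corner in isolation against a ``drop‑plus‑horizontal'' route fails: when both the altitude $\Im z_i/c$ and the offset $|u_i|$ are large, $d_{\mathbb{H}}(z_i,z)$ drops arbitrarily far below $\log(\Im z_i/c)+d_{\mathbb{H}}(\text{projection},z)$, and the estimate survives only because the single shared crossing $z^\ast$ cannot make both corners extremal at once. Turning this global trade‑off into a quantitative bound---and extracting the clean constant $2\delta$ rather than a messier one---is where the real work lies: the constant $\delta=\log(1+\sqrt2)$ should appear as the supremum of the one‑variable function produced by the reflection law, and the crux is to verify that this supremum is exactly $\delta$ and is realised in the limiting right‑angled crossing.
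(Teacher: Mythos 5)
Your handling of the easy half is correct and complete: in the first case of \eqref{eq:HTmetric} the two sides agree, and in the second case the vertical projection $z^\ast=\Re z_1+i\,c$ (with $c=q^{\mathfrak{h}(v)}$ and, say, $\Im z_1\le\Im z_2$), the triangle inequality, and the identity $2\log(\Im z_1/c)=(\log q)\bigl(d_{\mathbb{T}}(w_1,w_2)-|\mathfrak{h}(w_1)-\mathfrak{h}(w_2)|\bigr)$ give exactly $d_{\mathsf{HT}}\le Q$; you also read the subtracted term correctly as the level difference $|\log\Im z_1-\log\Im z_2|$, which is what the printed $|\Im z_1-\Im z_2|$ must mean, since otherwise the first inequality would already fail for two points on one vertical geodesic. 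The genuine gap is that the second inequality --- which you yourself call the heart of the lemma, and which is where the constant $2\delta$ lives --- is never established: you record the optimality (reflection) condition at the minimizing crossing point, say that substituting it back ``should'' produce two corner contributions each controlled by $\delta$, and then explicitly defer the quantitative verification (``where the real work lies''). So the proposal proves only half of the statement. Note also that this survey contains no proof to compare against (the lemma is quoted from \cite{BSSW2}), so your argument has to stand alone, and it does not.

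Two remarks on how to close it. First, the reflection law is a detour: $d_{\mathsf{HT}}$ is a minimum over $z\in L_{\mathfrak{h}(v)}$, so $Q\le d_{\mathsf{HT}}+2\delta$ is equivalent to the uniform bound $d_{\mathbb{H}}(z_1,z)+d_{\mathbb{H}}(z,z_2)\ge Q-2\delta$ for \emph{every} $z$ on that line; characterising the minimizer buys nothing and creates precisely the algebra you are worried about. Second, the uniform bound follows from two elementary facts. Writing $z=x+ic$, $t_i=|x-\Re z_i|/(2\,\Im z_i)$, and using $\sinh\bigl(\tfrac{1}{2}d_{\mathbb{H}}(z,z')\bigr)=\frac{|z-z'|}{2\sqrt{\Im z\,\Im z'}}\,$, one has the one\dash variable corner estimate
$$
d_{\mathbb{H}}(z_i,z)\;\ge\;\log\frac{\Im z_i}{c}+2\log\Bigl(t_i+\sqrt{1+t_i^2}\,\Bigr)-\delta
$$
(a calculus exercise, in which the actual deficit is even strictly smaller than $\delta$), while the triangle inequality along the path $z_1\to(x+i\,\Im z_1)\to(x+i\,\Im z_2)\to z_2$, whose horizontal pieces are geodesic chords of horocycles, gives
$$
d_{\mathbb{H}}(z_1,z_2)\;\le\;2\log\Bigl(t_1+\sqrt{1+t_1^2}\,\Bigr)+\log\frac{\Im z_2}{\Im z_1}+2\log\Bigl(t_2+\sqrt{1+t_2^2}\,\Bigr)\,.
$$
Adding the two corner estimates and inserting the latter bound yields, still with $\Im z_1\le\Im z_2$, that $d_{\mathbb{H}}(z_1,z)+d_{\mathbb{H}}(z,z_2)\ge d_{\mathbb{H}}(z_1,z_2)+2\log(\Im z_1/c)-2\delta=Q-2\delta$, as required. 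Finally, your closing criterion aims at the wrong target: you do not need the supremum of any discrepancy to be \emph{exactly} $\delta$, only an upper bound of $2\delta$ in total; in fact the optimal constant in the lemma is strictly smaller than $2\delta$, so a strategy whose crux is to verify that a supremum equals $\delta$ cannot be carried out.
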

Let us now describe the isometry group. We already know the group
$\mathsf{Aff}(\mathbb{T}_p)$ of all automorphisms of the tree that
preserve the predecessor relation. (In terms of the action on the 
boundary, this is the group of all automorphisms of the tree which
fix $\varpi$.) On the other hand, consider the group of 
orientation\dash preserving isometries of $\mathbb{H}$ which send the
collection of all lines $\mathsf{L}_k$ to itself:
$$
\mathsf{Aff}(\mathbb{H}_q) 
= \left\{  g=\begin{pmatrix} q^n & b \\ 0 & 1 \end{pmatrix} :
n \in \mathbb{Z}\,,\;b\in \mathbb{R} \right\} \quad
\text{acting by} \quad gz = q^n z + b\,,\; z \in \mathbb{H}\,.
$$
Left Haar measure $dg$ and the modular function 
$\Delta_{\mathsf{Aff}(\mathbb{H}_q)}$ are given by
\begin{equation}\label{eq:modularAffRq}
dg = q^{-n}\,dn\,\,db \quad \text{and}\quad 
\Delta_{\mathsf{Aff}(\mathbb{H}_q)}(g) = q^{-n}\,,\quad\text{if}\quad
g={\scriptstyle \begin{pmatrix}q^n& b \\ 0 & 1 \end{pmatrix}}\,.
\end{equation}
Here, $dn$ is counting measure on $\mathbb{Z}$ and $db$ is Lebesgue 
measure on $\mathbb{R}$.
We can now consider the horocyclic product of $\mathsf{Aff}(\mathbb{T}_p)$
and $\mathsf{Aff}(\mathbb{H}_q)$. 
The following is not hard to prove; see \cite{BSSW2}, where the group is 
called $\mathcal{A}{(q,p)}$. 

\begin{thm}\label{thm:isogroup} The group
	\begin{align*}
	\mathcal{B} = \mathcal{B}(p,q) 
	& = \bigl\{ (g_1\,,g_2) \in \mathsf{Aff}(\mathbb{T}_p) \times \mathsf{Aff}(\mathbb{H}_q) : 
	\\&\qquad \log_{p} \Delta_{\mathsf{Aff}(\mathbb{T}_p)}(g_1) 
+ \log_{q} \Delta_{\mathsf{Aff}(\mathbb{H}_q)}(g_2) = 0 \bigr\} 
	\end{align*}
acts transitively on $\mathsf{HT}(p,q)$ by 
$$
(w,z) \mapsto (g_1w,g_2z). 
$$
It is the semi\dash direct product
$$
\mathbb{R} \rtimes \mathsf{Aff}(\mathbb{T}_p) $$ with respect to the action $$
 b \mapsto q^{\Phi(g_1)}\,b\,, \; g_1 \in \mathsf{Aff}(\mathbb{T}_p)\,,\; b \in \mathbb{R}\,,
$$
and it acts on $\mathsf{HT}(p,q)$ with compact quotient isometric with the circle of length
$\log q$. The full group of isometries of $\mathsf{HT}(p,q)$ is generated by 
$\mathcal{B}(p,q)$ and the reflection
$$
(w, x+ i\, y) \mapsto (w,-x+i\, y)\,.
$$
As a closed subgroup of $\mathsf{Aff}(\mathbb{T}_p) \times \mathsf{Aff}(\mathbb{H}_q)$, 
the group $\mathcal{B}(p,q)$ 
is locally compact, compactly generated and amenable, and its modular function is 
given by
$$
\Delta_{\mathcal{B}}(g_1,g_2)= (p/q)^{\Phi(g_1)}\,. 
$$
\end{thm}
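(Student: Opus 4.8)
The plan is to settle the algebraic and measure-theoretic assertions by direct computation, reserving the real work for the determination of the full isometry group. First I would unwind the defining condition: since $\log_p\Delta_{\mathsf{Aff}(\mathbb{T}_p)}(g_1)=\Phi(g_1)$ and, by \eqref{eq:modularAffRq}, $\log_q\Delta_{\mathsf{Aff}(\mathbb{H}_q)}(g_2)=-n$ when $g_2=\bigl(\begin{smallmatrix}q^n&b\\0&1\end{smallmatrix}\bigr)$, membership in $\mathcal{B}$ is exactly the requirement $n=\Phi(g_1)$. Thus an element of $\mathcal{B}$ is parametrised by $(g_1,b)\in\mathsf{Aff}(\mathbb{T}_p)\times\mathbb{R}$, and multiplying the $\mathsf{Aff}(\mathbb{H}_q)$-factors shows that the $b$-coordinate composes as $b+q^{\Phi(g_1)}b'$; this is precisely the semidirect product $\mathbb{R}\rtimes\mathsf{Aff}(\mathbb{T}_p)$ with the stated action. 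The action on $\mathsf{HT}$ is well defined because $\mathfrak{h}(g_1w)=\mathfrak{h}(w)+\Phi(g_1)$ while $\log_q\Im(g_2z)=\log_q\Im z+\Phi(g_1)$, so the constraint $\mathfrak{h}(w)=\log_q\Im z$ is preserved. Given two points at the same level, one sends their tree coordinates onto each other by a level-preserving tree automorphism and then matches the real parts by a suitable $b$; together with integer level-shifts this gives transitivity on the bifurcation lines $\mathsf{L}_v$ and on each horocyclic level.

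Second, for the quotient I would note that $\mathcal{B}$ fixes the fractional part of $\mathfrak{h}(w)=\log_q\Im z$ and shifts its integer part arbitrarily, so the only invariant of an orbit is the height modulo $1$; since one unit of height is hyperbolic distance $\log q$, the quotient is isometric to a circle of length $\log q$, and in particular the action is cocompact. Local compactness is immediate, as $\mathcal{B}$ is the kernel of a continuous homomorphism, hence a closed subgroup of the locally compact group $\mathsf{Aff}(\mathbb{T}_p)\times\mathsf{Aff}(\mathbb{H}_q)$. Amenability follows from the fact recorded above that a horocyclic product of amenable groups is amenable, together with the amenability of $\mathsf{Aff}(\mathbb{T}_p)$ and of the metabelian group $\mathsf{Aff}(\mathbb{H}_q)$; equivalently, $\mathcal{B}$ is an extension of the amenable group $\mathsf{Aff}(\mathbb{T}_p)$ by $\mathbb{R}$. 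Compact generation then follows from the Milnor--\v{S}varc lemma, since $\mathcal{B}$ acts properly (point stabilisers are compact) and cocompactly by isometries on the proper geodesic space $\mathsf{HT}$.

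Third, the modular function is cleanest from the semidirect product structure. Writing $N=\mathbb{R}$, $H=\mathsf{Aff}(\mathbb{T}_p)$ and letting $\delta(g_1)=q^{\Phi(g_1)}$ be the modulus of the automorphism $b\mapsto q^{\Phi(g_1)}b$ of $N$, a left Haar measure of $\mathcal{B}$ is $\delta(g_1)^{-1}\,db\,d\lambda_H(g_1)$; tracking the effect of a right translation gives $\Delta_{\mathcal{B}}(g_1,g_2)=\Delta_H(g_1)\,\delta(g_1)^{-1}=p^{\Phi(g_1)}q^{-\Phi(g_1)}=(p/q)^{\Phi(g_1)}$. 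As a sanity check the same recipe recovers $\Delta_{\mathsf{Aff}(\mathbb{H}_q)}(g)=q^{-n}$, and the computation parallels the one behind Corollary \ref{cor:Aqr}.

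Finally, the full isometry group is the main obstacle. That $\mathcal{B}$ acts isometrically follows from \eqref{eq:HTmetric}, the $\mathsf{Aff}(\mathbb{H}_q)$-factor acting by hyperbolic isometries and the tree factor by automorphisms; and the reflection $\sigma\colon(w,x+iy)\mapsto(w,-x+iy)$ is an orientation-reversing hyperbolic isometry fixing every level, hence an isometry of $\mathsf{HT}$ not lying in $\mathcal{B}$. For the converse, let $\gamma$ be any isometry. The bifurcation lines are intrinsic, being the singular set where $\mathsf{HT}$ fails to be locally a half-plane, the local model being a ``book'' of $p+1$ sheets glued along $\mathsf{L}_v$. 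Because $p\ge 2$, the single sheet below is distinguishable from the $p$ sheets above, so $\gamma$ preserves the up/down direction and induces a predecessor-preserving automorphism of $\mathbb{T}_p$, i.e.\ an element of $\mathsf{Aff}(\mathbb{T}_p)$. Composing $\gamma$ with a suitable element of $\mathcal{B}$ trivialises this tree automorphism and fixes a base point on a chosen bifurcation line; the residual isometry then fixes each vertex, hence preserves every hyperbolic sheet together with its slicing into the levels $\mathsf{L}_k$, and the only isometries of $\mathbb{H}$ fixing $\boldinfty$ and mapping $\{\Im z=q^k\}$ to itself are $z\mapsto q^nz+b$ and the reflections $z\mapsto-q^n\bar z+b$. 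Hence the residual map is $\mathrm{id}$ or $\sigma$, so $\mathrm{Isom}(\mathsf{HT})=\langle\mathcal{B},\sigma\rangle$; and since conjugation by $\sigma$ merely replaces $b$ by $-b$ while leaving $g_1$ and $n$ unchanged, $\sigma$ normalises $\mathcal{B}$ and the full group is the index-two extension $\mathcal{B}\rtimes\langle\sigma\rangle$. I expect the delicate point to be this rigidity step: verifying that, after normalisation, the residual isometry restricts on each glued sheet to an affine map respecting the level family, so that consistency across sheets sharing a bifurcation line forces exactly $\mathrm{id}$ or $\sigma$ globally.
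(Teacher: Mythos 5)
The paper you were asked to compare against contains no proof of this theorem: it says only that the result ``is not hard to prove'' and refers to \cite{BSSW2}, where the group appears under the name $\mathcal{A}(q,p)$. So there is no internal argument to measure your proposal against, and it must be judged on its own merits. On those terms, the algebraic and measure-theoretic parts are correct and essentially complete: the reduction of the membership condition to $n=\Phi(g_1)$, the parametrisation by $(g_1,b)$ identifying $\mathcal{B}$ with $\mathbb{R}\rtimes\mathsf{Aff}(\mathbb{T}_p)$ under the stated action, the compatibility of the action with the equation $\mathfrak{h}(w)=\log_q(\Im z)$, closedness (kernel of a continuous homomorphism), amenability, compact generation via the locally compact Milnor--\v{S}varc lemma, and the semidirect-product computation giving $\Delta_{\mathcal{B}}(g_1,g_2)=p^{\Phi(g_1)}q^{-\Phi(g_1)}=(p/q)^{\Phi(g_1)}$ all check out. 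You also handle correctly a wrinkle in the statement itself: since $\Phi(g_1)\in\mathbb{Z}$, the fractional part of the height is an invariant of the action, so it cannot be literally transitive (that would make the quotient a point rather than a circle of length $\log q$); your identification of the orbits as the constant-fractional-height sets, with transitivity on each of them, is exactly what the theorem must mean.

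The genuinely incomplete part is the one you flag yourself, the determination of the full isometry group, and one step there is stated in a form that would fail: the up/down distinction does \emph{not} follow from the count ``one sheet below versus $p$ sheets above''. A bijection between the $p+1$ sheets at one bifurcation line and the $p+1$ sheets at its image line can perfectly well send the lower sheet to an upper one as far as counting is concerned. What rules this out is the metric asymmetry of a strip: for two nearby points on a bifurcation line, the hyperbolic geodesic joining them rises \emph{above} the line, so their intrinsic distance inside each of the $p$ sheets attached from above equals their distance in $\mathsf{HT}$, while inside the unique sheet attached from below it is strictly larger (any path confined below the line is longer); hence every isometry must send sheets attached from above to sheets attached from above. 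The role of the hypothesis $p\ge 2$ is different: it is what makes the bifurcation lines an intrinsic singular set at all (for $p=1$ the space is just $\mathbb{H}$ and has many more isometries). You also use, only implicitly, the fact that a metric isometry between (open pieces of) hyperbolic planes is the restriction of a M\"obius or conjugate-M\"obius map; that is what licenses the classification of the residual isometry on each sheet and your strip-by-strip consistency argument. With these two points repaired your outline does close, but as written this portion remains a sketch --- which, to be fair, is also its status in the paper, where the proof is delegated to \cite{BSSW2}.
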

Again, the full isometry group is non\dash unimodular and cannot have a discrete, co\dash compact
subgroup unless $q=p$.

Regarding the classification up to quasi\dash isometries, the following available result
is not as complete as Theorem \ref{thm:qi-DL} for DL-graphs. 

\begin{thm}\label{thm:qi-HT} {\rm\cite{FM1}}. 
Let $p, p' \ge 2$ be integers. Then $\mathsf{HT}(p,p)$ is quasi\dash isometric with
$\mathsf{HT}(p',p')$ if and only if $p$ and $p'$ are powers of a common integer.
\end{thm}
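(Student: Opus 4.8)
The plan is to first reduce the criterion to a single arithmetic condition. For integers $p,p'\ge 2$, the statement that $p$ and $p'$ are powers of a common integer is equivalent to $\log p/\log p'\in\mathbb{Q}$: if $\log p/\log p'=a/b$ in lowest terms then $p^b=p'^a$, and unique factorisation produces a single integer $r$ with $p=r^a$ and $p'=r^b$, while the converse is immediate. So the whole equivalence becomes: $\mathsf{HT}(p,p)$ is quasi-isometric with $\mathsf{HT}(p',p')$ if and only if $\log p/\log p'\in\mathbb{Q}$.

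For the sufficiency (``if'') direction I would exhibit an explicit quasi-isometry. Writing $p=r^a$, $p'=r^b$, it is enough, since quasi-isometry is an equivalence relation, to show that $\mathsf{HT}(r^a,r^a)$ and $\mathsf{HT}(r,r)$ are quasi-isometric. I would use the \emph{level-refinement} map: realise $\mathbb{T}_{r^a}$ inside $\mathbb{T}_r$ as the subtree spanned by the horocycles $H_{ak}$ (a level-$k$ vertex of $\mathbb{T}_{r^a}$ becomes the level-$ak$ vertex of $\mathbb{T}_r$, which has exactly $r^a$ descendants $a$ levels lower), keep the hyperbolic coordinate $z$ unchanged, and observe that the slicing constraint $\mathfrak{h}(w)=\log_{r^a}(\Im z)$ in the source matches $\mathfrak{h}(w)=\log_r(\Im z)$ in the target because $(r^a)^k=r^{ak}$. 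The map is quasi-surjective, as every vertex of $\mathbb{T}_r$ lies within $a$ of the refined skeleton. That it is quasi-isometric I would read off Lemma~\ref{lem:metric}: the controlling quantity in the source involves $(\log r^a)\,d_{\mathbb{T}_{r^a}}=a(\log r)\,d_{\mathbb{T}_{r^a}}$, which equals $(\log r)\,d_{\mathbb{T}_r}$ in the target since each $\mathbb{T}_{r^a}$-edge is $a$ edges of $\mathbb{T}_r$, while the hyperbolic term $d_{\mathbb{H}}$ and the correction $|\Im z_1-\Im z_2|$ are untouched; hence both metrics agree with the same quantity up to the additive constant $2\delta$.

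The necessity (``only if'') direction is the deep content of \cite{FM1}, and I would follow the rigidity route. The starting point is the structure of the two ends at infinity: as $\Im z\to 0$ (toward $\varpi$) the hyperbolic sheets coalesce, giving a connected \emph{lower} boundary homeomorphic to $\mathbb{R}$, whereas as $\Im z\to\infty$ the tree branches out, giving a totally disconnected \emph{upper} boundary homeomorphic to $\partial^*\mathbb{T}_p\cong\mathbb{Q}_p$, along which the vertical translation isometry of length $\log p$ (the one producing the quotient circle of Theorem~\ref{thm:isogroup}) acts by the natural $p$-fold self-similarity. The crucial, and hardest, step is to prove that any quasi-isometry $\phi\colon\mathsf{HT}(p,p)\to\mathsf{HT}(p',p')$ is \emph{height-respecting}: it coarsely preserves the Busemann height, up to an affine change $t\mapsto\lambda t+c$ with orientation fixed (the two boundaries being topologically distinguishable), and coarsely permutes the family of hyperbolic sheets. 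Since $\mathsf{HT}(p,p)$ is not Gromov-hyperbolic, this cannot be harvested from boundary functoriality and instead needs the delicate coarse-geometric analysis of Farb and Mosher, controlling how quasigeodesics shadow the sheets and how the branch loci are detected metrically. I expect this to be the main obstacle.

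Granting the rigidity step, the key is to metrise the upper boundary by the \emph{hyperbolic visual} metric rather than the combinatorial ultrametric: a depth-$n$ cylinder then has horizontal diameter of order $p^{-n}$, so this boundary is an Ahlfors-regular, totally disconnected self-similar Cantor set whose defining contraction ratio is $1/p$ (note its Hausdorff dimension is that of a line, not $\log p$, which is why no dimension obstruction arises here). Being height-respecting, $\phi$ induces a bi-Lipschitz homeomorphism between this boundary and its counterpart for $p'$, intertwining the contraction ratio $1/p$ with $1/p'$. At this point the classical rigidity of Lipschitz equivalence of self-similar sets (the admissible contraction ratios of bi-Lipschitz-equivalent self-similar Cantor sets must generate commensurable multiplicative subgroups of $\mathbb{R}_{>0}$) forces $\log p$ and $\log p'$ to be commensurable, that is $\log p/\log p'\in\mathbb{Q}$, which by the first paragraph is exactly the common-power condition. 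This completes the scheme.
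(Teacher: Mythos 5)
Your overall architecture is sound, and it is worth noting that the paper itself offers no proof of this statement: it is quoted directly from \cite{FM1}, so the only meaningful comparison is with the argument of Farb and Mosher, which your outline in fact reconstructs. Your arithmetic reduction (common powers $\Leftrightarrow \log p/\log p' \in \mathbb{Q}$) is correct, and your proof of the ``if'' direction is complete and correct: the level-refinement map $\mathbb{T}_{r^a} \to \mathbb{T}_r$ (level $k \mapsto$ level $ak$), keeping the hyperbolic coordinate fixed, is surjective onto $\mathsf{HT}(r,r)$, multiplies tree distances by $a$ up to an additive error of at most $2(a-1)$ (the $\mathbb{T}_r$-confluent of two image vertices may sit up to $a-1$ levels below the image of the $\mathbb{T}_{r^a}$-confluent), and Lemma \ref{lem:metric} then converts this into a quasi-isometry exactly as you say. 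This is more than the survey provides.

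The genuine gap is in the ``only if'' direction, and you have named it yourself: the claim that every quasi-isometry $\mathsf{HT}(p,p) \to \mathsf{HT}(p',p')$ is height-respecting --- coarsely preserving the Busemann function up to an affine change and coarsely permuting the hyperbolic sheets --- is not proven in your proposal; it is described and then assumed. That claim is not a technical preliminary, it \emph{is} the theorem: essentially all of the coarse-geometric work of \cite{FM1} goes into this step, and nothing you write (the connected-versus-totally-disconnected contrast of the two ends, the non-hyperbolicity remark) advances it, because a quasi-isometry of a non-hyperbolic space carries no a priori boundary functoriality at all --- which is precisely why the rigidity must be extracted by hand. So as written, the necessity direction is a correct strategy with its central lemma missing. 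Your endgame, by contrast, is essentially right, though I would not invoke ``classical rigidity of self-similar sets'' in the generality you state (Falconer--Marsh-type theorems yield weaker algebraic conclusions for general ratio lists, and their separation hypotheses do not hold for the obvious Euclidean realisations here); the precise tool, which applies verbatim to these single-ratio ultrametric Cantor sets, is the theorem proved in Cooper's appendix to \cite{FM1}: $\mathbb{Q}_m$ and $\mathbb{Q}_n$ are bi-Lipschitz homeomorphic if and only if $m$ and $n$ are powers of a common integer. In short, your reconstruction of the route through \cite{FM1} is faithful; what is missing is the hard half of it.
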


For the general case, there is the following working hypothesis, still
to be verified:\footnote{I thank David Fisher for an exchange on this issue.}
\begin{ques}\label{ques:qi-HT} Let $p, p' \ge 2$ be integers and $q, q' > 1$ real. 
 
Is it true that $\mathsf{HT}(p,q)$ is quasi\dash isometric with
$\mathsf{HT}(p',q')$ if and only if $p$ and $p'$ are powers of a common integer
and $\log p'/\log p = \log q'/\log q$?
\end{ques}

Again, there is a natural geometric compactification. Recall that the the 
boundary of $\mathbb{H}$ is $\mathbb{R} \cup \{{\boldinfty}\}$ in 
the upper half plane model. The compactification $\widehat{\mathbb{H}}$ 
of $\mathbb{H}$ is easier to visualise when one passes to the Poincar\'e 
disk model: it then is simply the closed unit disk. The boundary point 
${\boldinfty}$ then corresponds to the ``North pole'' $i$ (the 
imaginary unit), while $\mathbb{R}$ corresponds to the unit circle
without $i$. 

\begin{dfn}\label{def:HTbdry}
The geometric compactification $\widehat{\mathsf{HT}}(p,q)$ is the closure
of $\mathsf{HT}(p,q)$ in $\widehat{\mathbb{T}}_p\times \widehat{\mathbb{H}}_q\,$, 
and the boundary at infinity is
$$
\partial \mathsf{HT}(p,q) = \widehat{\mathsf{HT}}(p,q) \setminus \mathsf{HT}(p,q).
$$
\end{dfn}
Again, we can imagine the boundary as a ``filled 8'' as in Figure~5, but this time 
the second of the two disks making up the ``8'' is a true unit disk: the boundary is
$$
\partial \mathsf{HT}(p,q) = \Bigl(\widehat{\mathbb{T}}_p\times \{{\boldinfty}\}\Bigr)
\cup \Bigl(\{\varpi\}\times \widehat{\mathbb{H}}_q\Bigr)\,.
$$
The two pieces meet in the point $(\varpi,{\boldinfty})$.
Convergence of a sequence ${\boldz}_n =  (w_n\,,z_n) \in \mathsf{HT}(p,q)$ to the 
boundary is analogous to the case of $\mathsf{DL}$ (but recall that now the tree is a metric
graph, so that convergence of a sequence to a point in $\mathbb{T}$ does not require that
the sequence stabilises at that point):
When $w_n \to w \in \mathbb{T} \cup \partial^* \mathbb{T}$ then necessarily 
$z_n \to {\boldinfty}$, whence ${\boldz}_n \to (w,{\boldinfty})$.
In the same way, when $z_n \to z \in \mathbb{H} \cup \partial^* \mathbb{H}$, then 
${\boldz}_n \to (\varpi,z)$. Finally, it may also occur that $w_n \to \varpi$ and 
$z_n \to {\boldinfty}$, in which case
${\boldz}_n \to (\varpi,{\boldinfty})$. 

\bigskip
 
\noindent\textbf{C. Sol-groups, resp.\ manifolds}

\smallskip\noindent
We consider again the hyperbolic upper half plane 
$\mathbb{H} = \{ x + i\,w : x, w \in \mathbb{R}\,,\; w > 0 \}$,
but use a slightly different parametrisation and notation.
The standard lengh element in the $(x,w)$-coordinates is
$w^{-2}(dx^2 + dw^2)$.
We pass to the \emph{logarithmic model} by substituting
$z = \log w\,$, and in the coordinates $(x,z) \in \mathbb{R}^2$, the 
length element becomes $e^{-2z}dx^2 + dz^2$. Now we also change 
curvature to $-p^2$ by modifying the length element into
$$
ds^2 = d_{p}s^2 = e^{-2p z}\,dx^2+ dz^2\,.
$$
We write $\mathbb{H}(p)$ for the hyperbolic plane with this parametrization and
metric and ${\boldx}=(x,z)$ for elements of $\mathbb{H}(p)$, so that in
the upper half plane model, ${\boldx}$ corresponds to $x + i\,e^{pz}$. 

The function $\mathfrak{h}({\bf x}) = z$ 
is then (up to the scaling factor $\log p$) the Busemann function with respect 
to the boundary point ${\boldinfty}$. 
Thus, $\bigl(\mathbb{H}(p), \mathfrak{h}\bigr)$ is  a Busemann pair.
 
The affine group $\mathsf{Aff}\bigl(\mathbb{H}(p)\bigr) = 
\bigl\{ g = \bigl(\begin{smallmatrix} e^{p c} & a \\[1pt] 0 & 1\end{smallmatrix}\bigr) 
: a, c \in \mathbb{R} \bigr\}$ acts on $\mathbb{H}(p)$  by $g(x,z) = (e^{p c}x+b, a+z)$
as an isometry group. It modular function is $\Delta_p(g) = e^{-p a}$.

\begin{dfn} \label{def:sol} For $p, q > 0$, the horocyclic product of
$\mathbb{H}(p)$ and $\mathbb{H}(q)$ is the manifold
$$
\mathsf{Sol}(p,q) = \mathbb{H}(p) \times_{\mathfrak{h}} \mathbb{H}(q). 
$$
Topologically, it is $\mathbb{R}^3$, but the length element in 
the $3$\dash dimensional coordinates $(x,y,z)$ is
$$
ds^2 = d_{p,q}s^2 = e^{-2p z}\,dx^2 + e^{2q z}\,dy^2 + dz^2\,,
$$
with the procjections $(x,y,z) \mapsto (x,z) \in \mathbb{H}(p)$ and
$(x,y,z) \mapsto (y,-z) \in \mathbb{H}(q)$.
\end{dfn} 
It is harder to draw a reasonable picture than in the case of two trees. On should imagine
to replace the two trees in Figure~4 by two hyperbolic (upper half) planes, where the second
one is upside down. 

Regarding the analogues of lemmas \ref{lem:daniela} and \ref{lem:metric},
so far only the following inequality has been proved, see {\sc Brofferio, Salvatori and
Woess}~\cite{BSW}.

\begin{lem}\label{lem:metricSol} {\rm \cite[Proposition 2.8(iii)]{BSW}}.
 If $(x_1\,,y_1\,,z_1)$, $(x_2\,,y_2\,,z_2)$ $\in$ $\mathsf{Sol}(p,q)$, 
then
$$
	\begin{aligned}
	& d_{\mathsf{Sol}}\bigl((x_1\,,y_1\,,z_1),(x_2\,,y_2\,,z_2)\bigr) \\
	&\qquad \le d_{\mathbb{H}(p)}\bigl((x_1\,,z_1),(x_2,z_2)\bigr) + 
d_{\mathbb{H}(q)}\bigl((y_1\,,-z_1),(y_2,-z_2)\bigr) - |z_1 - z_2|\,.
\end{aligned}
$$
\end{lem}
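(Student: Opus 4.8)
The plan is to bound $d_{\mathsf{Sol}}$ from above by exhibiting one good path between the two points $(x_1,y_1,z_1)$ and $(x_2,y_2,z_2)$, using throughout that every point of $\mathbb{R}^3$ lies in $\mathsf{Sol}(p,q)$, so that any smooth curve is admissible. Write $D_p=d_{\mathbb{H}(p)}\bigl((x_1,z_1),(x_2,z_2)\bigr)$ and $D_q=d_{\mathbb{H}(q)}\bigl((y_1,-z_1),(y_2,-z_2)\bigr)$. The engine is an elementary pointwise inequality: for real $A,B,C$, putting $u=\sqrt{A^2+C^2}$ and $v=\sqrt{B^2+C^2}$ (so $u,v\ge|C|$),
\[
\sqrt{A^2+B^2+C^2}=\sqrt{u^2+v^2-C^2}\le u+v-|C|,
\]
since squaring the two nonnegative sides reduces the claim to $(u-|C|)(v-|C|)\ge 0$. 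Applying this with $A=e^{-pz}\dot x$, $B=e^{qz}\dot y$, $C=\dot z$ along a path $\gamma=(x,y,z)$ and integrating yields the master estimate
\[
L_{\mathsf{Sol}}(\gamma)\le L_{\mathbb{H}(p)}\bigl(\pi_p\gamma\bigr)+L_{\mathbb{H}(q)}\bigl(\pi_q\gamma\bigr)-\mathrm{TV}(z),
\]
where $\pi_p\gamma=(x,z)$ and $\pi_q\gamma=(y,-z)$ are the projections of Definition \ref{def:sol} (their $\mathsf{Sol}$-lengths collapse to the respective factor lengths because the missing horizontal term is absent), and $\mathrm{TV}(z)=\int|\dot z|\,dt$ is the total variation of the height.

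Up to swapping the two points — which changes neither side of the asserted inequality — I may assume $z_1\le z_2$. Let $\gamma_p$ be the $\mathbb{H}(p)$-geodesic from $(x_1,z_1)$ to $(x_2,z_2)$ and $\gamma_q$ the $\mathbb{H}(q)$-geodesic from $(y_1,-z_1)$ to $(y_2,-z_2)$. Read in the height coordinate $z$, a hyperbolic geodesic is a semicircle, hence unimodal: $\gamma_p$ climbs from $z_1$ to a peak $h_p\ge z_2$ and descends to $z_2$, while $\gamma_q$, in the $z$-coordinate, drops from $z_1$ to a trough $h_q\le z_1$ and rises back to $z_2$.

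I would then glue these opposite excursions into the single height profile $z_1\searrow h_q\nearrow h_p\searrow z_2$ and build $\gamma$ by keying both horizontal motions to the shared height: let $y$ trace $\gamma_q$ as $z$ covers $[h_q,z_2]$ and let $x$ trace $\gamma_p$ as $z$ covers $[z_1,h_p]$, freezing each horizontal coordinate outside the range where its geodesic is active. This is a continuous path from $(x_1,y_1,z_1)$ to $(x_2,y_2,z_2)$ whose projection $\pi_p\gamma$ is $\gamma_p$ preceded by a vertical dip of depth $a:=z_1-h_q$, and whose projection $\pi_q\gamma$ is $\gamma_q$ followed by a vertical bump of height $b:=h_p-z_2$. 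Therefore
\[
L_{\mathbb{H}(p)}(\pi_p\gamma)=D_p+2a,\quad L_{\mathbb{H}(q)}(\pi_q\gamma)=D_q+2b,\quad \mathrm{TV}(z)=2a+2b+(z_2-z_1),
\]
and in the master estimate the corrections $2a$ and $2b$ telescope, leaving $L_{\mathsf{Sol}}(\gamma)\le D_p+D_q-|z_1-z_2|$, as required.

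The main obstacle is precisely the choice of path. The tempting concatenations — carry out the $\mathbb{H}(p)$-move and then the $\mathbb{H}(q)$-move — arrive at the wrong intermediate height, and the triangle inequality then forces them to \emph{exceed} $D_p+D_q-|z_1-z_2|$ rather than attain it; the shared vertical travel must instead be genuinely overlapped, which is exactly what the simultaneous motion of $x$, $y$, $z$ along the merged profile achieves. Everything else is routine: that hyperbolic geodesics have unimodal height profiles, that the gluing defines a bona fide curve, and that $\pi_p\gamma$ and $\pi_q\gamma$ are the stated dip-and-geodesic, respectively geodesic-and-bump, concatenations.
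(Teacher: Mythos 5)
Your argument can only be judged on its own merits here, because the paper itself offers no proof of this lemma: it is quoted verbatim from \cite[Proposition 2.8(iii)]{BSW}, with the proof residing in that reference. On its own merits, your proof is correct and self-contained. The pointwise inequality is right: with $u=\sqrt{A^2+C^2}$ and $v=\sqrt{B^2+C^2}$, squaring the two nonnegative sides of $\sqrt{u^2+v^2-C^2}\le u+v-|C|$ reduces it to $2(u-|C|)(v-|C|)\ge 0$; and since the $\mathsf{Sol}$-speed and the two projected speeds of a curve are exactly $\sqrt{A^2+B^2+C^2}$, $\sqrt{A^2+C^2}$ and $\sqrt{B^2+C^2}$ with $A=e^{-pz}\dot x$, $B=e^{qz}\dot y$, $C=\dot z$ (note the $\mathbb{H}(q)$-factor carries the height coordinate $-z$, which makes its metric read $e^{2qz}dy^2+dz^2$, as you implicitly use), integration gives your master estimate. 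The unimodality of the height along hyperbolic geodesics is correct (semicircles and vertical lines in the half-plane model, transported by the monotone change of coordinate $w=e^{pz}$), the glued profile $z_1\searrow h_q\nearrow h_p\searrow z_2$ yields a continuous admissible curve (every point of $\mathbb{R}^3$ is indeed in $\mathsf{Sol}(p,q)$ by Definition \ref{def:sol}), and the bookkeeping $L_{\mathbb{H}(p)}(\pi_p\gamma)=D_p+2a$, $L_{\mathbb{H}(q)}(\pi_q\gamma)=D_q+2b$, $\mathrm{TV}(z)=2a+2b+(z_2-z_1)$ is exact, so the corrections cancel as claimed. The one point you should spell out is the middle stretch $z\in[z_1,z_2]$, where $x$ and $y$ move simultaneously as functions of the common height: this requires the two ascending geodesic branches to be graphs over $z$, and when an apex coincides with an endpoint ($h_p=z_2$ or $h_q=z_1$) the horizontal derivative blows up there. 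This is harmless --- length is independent of parametrisation, and one can apply the estimate on $[z_1,z_2-\epsilon]$ and let $\epsilon\to 0$, the improper integrals converging to the finite sub-arc lengths --- but a complete write-up should say so. With that sentence added, your proof is a full substitute for the external citation.
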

It is an open (probably not too hard) exercise to derive a matching upper 
bound of the form 
$d_{\mathsf{Sol}}\bigl((x_1\,,y_1\,,z_1),(x_2\,,y_2\,,z_2)\bigr) + \text{\it const}\,$.

In the case of $\mathsf{Sol}$, the analogue of the isometry groups 
$\mathcal{A}(p,q)$ for $\mathsf{DL}$, resp.\ $\mathcal{B}(p,q)$ for 
$\mathsf{HT}$ is $\mathsf{Sol}$ itself. But in order to keep this 
analogy in mind, and also because we want to think of space and isometry 
group separately, we write $\mathcal{S}(p,q)$ for the corresponding
Lie group. 
\begin{fct}\label{fct:Solgroup} The Lie group
$$
\mathcal{S} = \mathcal{S}(p,q) = \Bigg\{
\mathfrak{g} = \begin{pmatrix} e^{p c} & a & 0         \\
	0         & 1 & 0         \\
	0         & b & e^{-q c}
\end{pmatrix}\,,\quad a, b, c \in \mathbb{R} \Bigg\}
$$
can be identified with $\mathsf{Sol}(p,q)$, such that $\mathfrak{g}$ 
as above corresponds to $(a,b,c)$. The (isometric, fixed\dash point\dash free) 
action on $\mathsf{Sol}(p,q)$ (or equivalently, the group product) is given by
$$
(a,b,c) \cdot  (x,y,z) = \bigl( e^{p c}x + a, e^{-q c}y + b, c+z\bigr)\,.
$$
The group is the horocyclic product of the two affine groups $\mathsf{Aff}\bigl(\mathbb{H}(p)\bigr)$
and $\mathsf{Aff}\bigl(\mathbb{H}(q)\bigr)$, consisting of all pairs $(g_1\,,g_2)$
in the product of those two groups which satisfy 
$$
\log_{p} \Delta_p(g_1) + \log_{q} \Delta_q(g_2) = 0\,.
$$
The modular function is 
$$
\Delta_{\mathsf{Sol}(p,q)}(\mathfrak{g}) = e^{(q-p)c}
\,,\quad \text{when}\quad \mathfrak{g} = \begin{pmatrix} 
e^{p c} & a & 0\\ 0 & 1 & 0 \\ 0 & b & e^{- q c} \end{pmatrix}.
$$
\end{fct}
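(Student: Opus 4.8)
The plan is to treat the statement as a bundle of four assertions and to verify them in order: (i) that the matrices $\mathfrak{g}$ form a group isomorphic, via $\mathfrak{g}\leftrightarrow(a,b,c)$, to $\mathbb{R}^2\rtimes\mathbb{R}$ with the displayed product; (ii) that the displayed affine map of $\mathbb{R}^3$ is a simply transitive action by isometries, which simultaneously identifies $\mathcal{S}(p,q)$ with $\mathsf{Sol}(p,q)$ and shows the action is fixed-point-free; (iii) that $\mathcal{S}(p,q)$ is the asserted horocyclic product of the two affine groups; and (iv) the formula for the modular function. All four are direct computations; the only place that needs genuine care is the sign bookkeeping in step (iii).

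First I would do the pure algebra. Multiplying two matrices of the given shape reads off the group law $(a,b,c)(a',b',c') = (a+e^{pc}a',\, b+e^{-qc}b',\, c+c')$, exhibiting $\mathcal{S}(p,q)$ as the semidirect product $\mathbb{R}^2\rtimes\mathbb{R}$ in which $c$ acts on $(a,b)$ by $(e^{pc}a,\, e^{-qc}b)$. I would then check that the stated map $\mathfrak{g}\cdot(x,y,z) = (e^{pc}x+a,\, e^{-qc}y+b,\, c+z)$ is a left action by composing two such maps and matching the result against the group law. Taking $(x,y,z)=(0,0,0)$ gives $\mathfrak{g}\cdot(0,0,0)=(a,b,c)$, so the action is transitive; and $\mathfrak{g}$ fixes a point only if the third-coordinate equation $c+z=z$ forces $c=0$, whereupon $a=b=0$. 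Hence the action is free (so fixed-point-free) and in fact simply transitive, and $\mathfrak{g}\mapsto\mathfrak{g}\cdot(0,0,0)$ is the identification of $\mathcal{S}(p,q)$ with the underlying space $\mathbb{R}^3=\mathsf{Sol}(p,q)$.

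Next comes the geometry. To see that the action is isometric I would pull back the length element $d_{p,q}s^2 = e^{-2pz}\,dx^2 + e^{2qz}\,dy^2 + dz^2$ under $(x,y,z)\mapsto(e^{pc}x+a,\, e^{-qc}y+b,\, c+z)$: the differentials scale by $e^{pc}$, $e^{-qc}$, $1$, while the shift $z\mapsto c+z$ replaces the coefficients $e^{-2pz}$, $e^{2qz}$ by $e^{-2pc}e^{-2pz}$, $e^{2qc}e^{2qz}$, and the two effects cancel coordinate by coordinate, leaving $d_{p,q}s^2$ unchanged. For the horocyclic-product description I would project $\mathfrak{g}$ to $g_1 = \bigl(\begin{smallmatrix}e^{pc}&a\\0&1\end{smallmatrix}\bigr)\in\mathsf{Aff}(\mathbb{H}(p))$ and $g_2 = \bigl(\begin{smallmatrix}e^{-qc}&b\\0&1\end{smallmatrix}\bigr)\in\mathsf{Aff}(\mathbb{H}(q))$, which are exactly the maps induced on the two factors by the projections $(x,y,z)\mapsto(x,z)$ and $(x,y,z)\mapsto(y,-z)$ of Definition~\ref{def:sol}. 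Their vertical (Busemann) shifts are $c$ and $-c$, which sum to $0$; since $\Delta_p(g_1)=e^{-pc}$ and $\Delta_q(g_2)=e^{qc}$, this is precisely the relation $\log_p\Delta_p(g_1)+\log_q\Delta_q(g_2)=0$, and conversely any pair satisfying it has opposite vertical shifts $\pm c$ and hence equals the image of a unique $\mathfrak{g}=(a,b,c)$. This bijection respects the group laws, giving the claimed isomorphism with the horocyclic product.

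Finally, for the modular function I would compute both Haar measures in the coordinates $(a,b,c)$. Left translation by $\mathfrak{g}_0=(a_0,b_0,c_0)$ has Jacobian $e^{(p-q)c_0}$, so the ansatz $d_\ell\mathfrak{g}=e^{\lambda c}\,da\,db\,dc$ is left-invariant exactly when $\lambda=q-p$; right translation has Jacobian $1$, so $d_r\mathfrak{g}=da\,db\,dc$ is right-invariant. Evaluating $\Delta_{\mathsf{Sol}(p,q)}(\mathfrak{g}_0)=\lambda_{\mathcal{S}}(U\mathfrak{g}_0)/\lambda_{\mathcal{S}}(U)$ with the left measure then yields $e^{(q-p)c_0}$, as claimed. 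The one subtle point throughout is the sign bookkeeping in step (iii): because $\mathsf{Sol}(p,q)$ is built with the $\mathbb{H}(q)$-factor ``upside down'' --- visible both in the $e^{-qc}$ block of $\mathfrak{g}$ and in the minus sign of the projection $(x,y,z)\mapsto(y,-z)$ --- one must verify that the two Busemann shifts are genuinely $c$ and $-c$ (equivalently, that under the curvature-$p^2$ normalization $\log_p\Delta_p$ recovers $\pm$ the vertical shift), so that the horocyclic-product condition $\mathfrak{h}(g_1)+\mathfrak{h}(g_2)=0$ really reduces to the displayed linear relation on the modular functions.
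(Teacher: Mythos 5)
Your proof is correct. The paper states this result as ``Facts'' with no proof at all --- it is treated as a routine verification --- and your four-step computation (group law from matrix multiplication, simply transitive isometric action via pullback of $d_{p,q}s^2$, identification of the two projections with elements of $\mathsf{Aff}(\mathbb{H}(p))$ and $\mathsf{Aff}(\mathbb{H}(q))$ whose vertical shifts $c$ and $-c$ sum to zero, and the Haar-measure calculation giving $\Delta_{\mathcal{S}}(\mathfrak{g})=e^{(q-p)c}$) is precisely the standard argument the paper leaves implicit. In particular, you correctly flag and resolve the one genuine subtlety: the relation $\log_p\Delta_p(g_1)+\log_q\Delta_q(g_2)=0$ only reduces to the horocyclic condition on the Busemann shifts if $\log_p$ is read as $\tfrac1p\ln$ (logarithm to base $e^p$), the natural analogue for the curvature-normalized groups, rather than as the base-$p$ logarithm used in the discrete cases.
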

Again, there is no co\dash compact lattice in $\mathcal{S}(p,q)$ unless $p=q$.
Regarding the quasi\dash isometry classification, we have the following analogue 
of Theorem \ref{thm:qi-DL}.
                                                                          
\begin{thm}\label{thm:qi-Sol}{\rm \cite{ESW1}+\cite{ESW2}}.
$\mathsf{Sol}(p,q)$ is quasi\dash isometric with
$\mathsf{Sol}(p',q')$ if and only if $p'/p = q'/q$.
\end{thm}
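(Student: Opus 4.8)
The plan is to treat the two implications separately: sufficiency is elementary, whereas necessity is the deep part.

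\smallskip
\noindent\emph{Sufficiency.} Suppose $p'/p = q'/q =: \lambda$, so that $p' = \lambda p$ and $q' = \lambda q$ with $\lambda > 0$. I claim the diffeomorphism
$$
F \colon \mathsf{Sol}(p,q) \to \mathsf{Sol}(p',q')\,, \qquad (x,y,z) \mapsto (x,y,z/\lambda)\,,
$$
is bi-Lipschitz, hence a quasi-isometry. Pulling back the length element $e^{-2p'z'}\,dx^2 + e^{2q'z'}\,dy^2 + dz'^2$ of $\mathsf{Sol}(p',q')$ under $F$ and using $p'/\lambda = p$, $q'/\lambda = q$ gives $e^{-2pz}\,dx^2 + e^{2qz}\,dy^2 + \lambda^{-2}\,dz^2$, which is uniformly comparable to the length element $d_{p,q}s^2$ of $\mathsf{Sol}(p,q)$, with ratio between $\min(1,\lambda^{-2})$ and $\max(1,\lambda^{-2})$. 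A diffeomorphism whose pullback metric is uniformly comparable to the source metric is bi-Lipschitz, which settles the ``if'' direction.

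\smallskip
\noindent\emph{Necessity.} This is the deep direction, the content of \cite{ESW2}; I sketch only its architecture. By the isometry $\mathsf{Sol}(p,q) \to \mathsf{Sol}(q,p)$, $(x,y,z)\mapsto(y,x,-z)$, we may normalise $p \le q$ and $p' \le q'$. Let $\varphi\colon \mathsf{Sol}(p,q) \to \mathsf{Sol}(p',q')$ be a quasi-isometry. Both spaces fibre over $\mathbb{R}$ through the height $\mathfrak{h}=z$ and carry two transverse foliations by hyperbolic planes: the copies of $\mathbb{H}(p)$ (fixing $y$) and of $\mathbb{H}(q)$ (fixing $x$); Lemma \ref{lem:metricSol} links the ambient metric to these two hyperbolic metrics. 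The central step is to prove that $\varphi$ is, up to bounded distance, \emph{height-respecting}: there is a slope $\alpha$ with $\mathfrak{h}(\varphi(\boldx)) = \alpha\,\mathfrak{h}(\boldx) + \mathrm{const}$ up to bounded error, and $\varphi$ coarsely carries the two foliations of the source into those of the target. This is established by \emph{coarse differentiation}: after cutting $\mathsf{Sol}(p,q)$ into large boxes and rescaling, $\varphi$ is shown to converge, on most scales and at most basepoints, to a map that respects the product-and-foliation structure, which is precisely what forces the height and the foliations to be coarsely preserved. Under the normalisation the slope may be taken positive.

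\smallskip
\noindent Granting height-respecting behaviour with slope $\alpha > 0$, one passes to the boundary. As $z \to +\infty$ the $x$-direction contracts at rate $p$ while the $y$-direction expands at rate $q$, so the ``upper'' boundary of $\mathsf{Sol}(p,q)$ is a copy of $\mathbb{R}$, namely $\partial^*\mathbb{H}(q)$, carrying a parabolic visual quasi-metric whose modulus is governed by $q$; symmetrically the ``lower'' boundary $\partial^*\mathbb{H}(p)$ is governed by $p$. Since $\alpha > 0$, $\varphi$ sends upper boundary to upper and lower to lower and induces bi-Lipschitz maps of these two boundary lines; because the height is rescaled by $\alpha$, comparing the visual metrics across $\varphi$ relates $\alpha$ to the contraction rates, the upper side giving $\alpha = q/q'$ and the lower side $\alpha = p/p'$. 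Eliminating $\alpha$ yields $q/q' = p/p'$, that is $p'/p = q'/q$.

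\smallskip
\noindent The main obstacle is the coarse differentiation step: showing that an \emph{a priori} unstructured quasi-isometry must, statistically over scales and basepoints, align with the height function and the two hyperbolic foliations. Once that rigidity is in hand, the passage to the boundary, the bi-Lipschitz bookkeeping, and the elimination of $\alpha$ are comparatively soft, while the sufficiency direction is immediate.
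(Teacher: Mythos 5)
Your proposal is correct, and in fact it does more than the paper, which states this theorem purely as a citation to \cite{ESW1} and \cite{ESW2} with no proof of either direction. Your sufficiency argument is sound and self-contained: under $(x,y,z)\mapsto(x,y,z/\lambda)$ with $p'=\lambda p$, $q'=\lambda q$, the pullback of $e^{-2p'z'}dx^2+e^{2q'z'}dy^2+dz'^2$ is indeed $e^{-2pz}dx^2+e^{2qz}dy^2+\lambda^{-2}dz^2$, which is uniformly comparable as a quadratic form to $d_{p,q}s^2$; comparability of the forms gives comparability of curve lengths and hence of distances, so the map is bi\dash Lipschitz (and surjective, being a diffeomorphism). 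This elementary half is nowhere spelled out in the survey, so it is a genuine addition. For necessity you defer, exactly as the paper does, to Eskin--Fisher--Whyte, and your sketch faithfully reproduces the architecture of their proof: coarse differentiation shows that any quasi\dash isometry is height\dash respecting up to bounded error, after which the induced maps on the two boundary lines $\partial^*\mathbb{H}(p)$ and $\partial^*\mathbb{H}(q)$ are bi\dash Lipschitz for the parabolic visual structures, and matching the contraction/expansion rates forces $\alpha=p/p'=q/q'$, i.e.\ $p'/p=q'/q$; your normalisation via the isometry $(x,y,z)\mapsto(y,x,-z)$ of $\mathsf{Sol}(p,q)$ onto $\mathsf{Sol}(q,p)$ is also correct. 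The only caveats sit in that sketched half: ruling out a negative slope (an upper/lower flip) requires the normalisation discussion you only allude to, and the coarse\dash differentiation step is of course not reproved here --- but since that is the content of an \emph{Annals} paper, treating it as a black box is precisely what the survey itself does.
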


Once more, there is a natural definition of the boundary of $\mathsf{Sol}(p,q)$ when
we consider our manifold as a subspace of $\mathbb{H}(p) \times \mathbb{H}(q)$.
The boundary of $\mathbb{H}(p)$ is the upper boundary point ${\boldinfty}$ together 
with the real line at the bottom of the upper half plane, while in the logarithmic model,
the real line sits at $z = -\infty$. Anyway, it is better to think of the Poncar\'e disk
and its compactification $\widehat{\mathbb{H}}(p)$ as a closed disk (with the
proper scaling of the metric in view of the curvature parameters).  

\begin{dfn}\label{def:Solbdry}
The geometric compactification $\widehat{\mathsf{Sol}}(p,q)$ is the closure
of $\mathsf{Sol}(p,q)$ in $\widehat{\mathbb{H}}(p)\times \widehat{\mathbb{H}}(q)\,$, 
and the boundary at infinity is
$$
\partial \mathsf{Sol}(p,q) = \widehat{\mathsf{Sol}}(p,q) \setminus \mathsf{Sol}(p,q).
$$
\end{dfn}

The boundary looks once more like in Figure~5, but this time, both halves of the
``8'' are true full unit disks. This time, we omit the description of convergence
to the boundary, which is completely analogous to $\mathsf{DL}$ and $\mathsf{HT}$. 

At last, we mention the work of {\sc Troyanov}~\cite{Tro}, who has given a careful
description of various features of the geometry of $\mathsf{Sol}(1,1)$. This includes,
in particular, the \emph{visibility boundary.} Briefly spoken, it consists of
those boundary points which can be ``seen'' from the chosen origin (reference point)
in our space as the limit of a geodesic ray that starts at the origin and converges
to that boundary point. In case of $\mathsf{Sol}(p,q)$, as a subset of the geometric
boundary, the visibility boundary is the ``8'' without its interior points and 
without the point where the two circles meet.  Note that this is not the same
as in the visibility metric (where distance between geodesics is distance in unit sphere 
between their tangent vectors at 0) referred to in \cite{Tro}.\footnote{I thank Jeremie 
Brieussel for an exchange on this issue.}
The visibility boundary is
completely analogous for $\mathsf{DL}(p,q)$ and $\mathsf{HT}(p,q)$.

\bigskip

In this section, we have undertaken an effort to underline a variety of common
geometric (resp.\ group\dash theoretic) features of $\mathsf{DL}$, $\mathsf{HT}$ and 
$\mathsf{Sol}$ which become clear thanks to visualising these spaces as
horocyclic products.  

\section{Lamplighters and other discrete subgroups}

We now come to the second question of the title of this article.
So far, we have seen that when $p \ne q$, then none of the groups
$\mathcal{A}(p,q)$, $\mathcal{B}(p,q)$ and $\mathcal{S}(p,q)$
can contain co\dash compact lattices (discrete subgroups with compact quotient), 
and in particular, $\mathsf{DL}(p,q)$ is far from even resembling a 
Cayley graph. What happens when $p=q\,$?

During a  visit of R\"oggi M\"oller (Reykjavik) to Graz in
2000, we had discussed but not succeeded to prove that $\mathsf{DL}(p,q)$
is not quasi\dash isometric with a Cayley graph. Shortly later, he sent me
a letter (at that time, still on paper \& by classical mail!) telling that
in discussions with Peter Neumann they had realised that $\mathsf{DL}(p,p)$
\emph{is} a Cayley graph. Later we realised that this was the 
\emph{lamplighter group} over $\mathbb{Z}$. 

Let us start with an explanation in terms of graphs. Consider a finitely generated 
group $G$ (resp., for a picture, one of its Cayley graphs). Imagine that at each 
group element (vertex) there is a lamp. Each lamp can be in $p$ different states 
(off, or on in different colours or intensities) which are described by the 
set $\mathbb{Z}_p = \{0, \dots, p-1\}$ -- the cyclic group of order $p$. 
(We might take any other finite group.) We think of $G$, resp.\ its given
Cayley graph, as a street network, and imagine a lamplighter walking along. 
Initially, all lamps are off (state $0$),
and at each step the lamplighter can choose or combine the following actions:
walk from a crossroad (vertex) to a neighbouring one, and/or modify the state of
the lamp at the current position. After a finite number of steps, only finitely
many lamps will be on. To encode this process, we have to keep track of

\begin{itemize}\itemsep-\parsep

 \item the current position of the lamplighter -- an element $g \in G$ 
(graph vertex), and

 \item the current configuration of lamps -- a function $\eta: G \to \mathbb{Z}_p$
with finite support $\{ x : \eta(x) \ne 0 \}$.
\end{itemize}

Let $\mathfrak{C}$ be the collection of all finitely supported configurations.
It is a group with respect to elementwise addition mod $p$. We have to consider
all pairs $(\eta,g)$, where $g \in G$ and $\eta \in \mathfrak{C}$.
Now every $g \in G$ acts on $\mathfrak{C}$ by $L_g\eta(x) = \eta(g^{-1}x)$.
Thus, we have a semi\dash direct product, called the \emph{wreath product}
$$
\mathbb{Z}_p \wr G = \mathfrak{C} \rtimes G\,,\quad (\eta,g)(\eta',g') =
(\eta + L_g\eta', gg')\,.
$$
The same works when $\mathbb{Z}_p$ is replaced by any other group $H$, in which case
the above addition mod $p$ should be replaced with elementwise group operation in $H$.
Below, we shall always have $G = \mathbb{Z}$, in which case, for $k \in \mathbb{Z}$,
we have of course $L_k\eta(x) = \eta(x-k)$, and 
$(\eta,k)(\eta',k') = (\eta + L_k\eta', k+k')$. Wreath products are nowadays often called
\emph{lamplighter groups,} in particular when the base group is $G=\mathbb{Z}$.

The following figure illustrates an element of $\mathbb{Z}_2 \wr \mathbb{Z}$: 
the configuration $\eta$ is $=1$ at the~$\bullet$s, and the lamplighter stands at the $\circ$.

\begin{figure}[h]
\hfill\beginpicture 

\setcoordinatesystem units <5mm,5mm> 

\setplotarea x from -10.2 to 10.2, y from -2 to 1

\putrule from  -10.6   0 to 10.6  0
\putrule from  -10 -.1 to -10 .1
\putrule from  -9 -.1 to -9 .1
\putrule from  -8 -.1 to -8 .1
\putrule from  -7 -.1 to -7 .1
\putrule from  -6 -.1 to -6 .1
\putrule from  -5 -.1 to -5 .1
\putrule from  -4 -.1 to -4 .1
\putrule from  -3 -.1 to -3 .1
\putrule from  -2 -.1 to -2 .1
\putrule from  -1 -.1 to -1 .1
\putrule from  0 -.1 to 0 .1
\putrule from  10 -.1 to 10 .1
\putrule from  9 -.1 to 9 .1
\putrule from  8 -.1 to 8 .1
\putrule from  7 -.1 to 7 .1
\putrule from  6 -.1 to 6 .1
\putrule from  5 -.1 to 5 .1
\putrule from  4 -.1 to 4 .1
\putrule from  3 -.1 to 3 .1
\putrule from  2 -.1 to 2 .1
\putrule from  1 -.1 to 1 .1

\multiput {{$\bullet$}} at -10 0  -8 0  -7  0  -5 0
                                        -3 0  0 0  1 0  5 0  7 0  9 0 /

\put {{$\scriptstyle \bigcirc$}} at 1 0

\put{Figure~9} at 15 0
\endpicture
\end{figure}

We now explain the correspondence between the lamplighter group $\mathbb{Z}_p \wr \mathbb{Z}$
and the graph $\mathsf{DL}(p,p)$. For this purpose, let us again look at Figure~2. Given any
vertex of $\mathbb{T}_p\,$, we can label the edges to its successors from left to right with the digits
$0, \dots, p-1$ ($0, 1$ in Figure~2). We let $\Sigma_p$ be the collection of all 
sequences $\bigl(\sigma(n)\bigr)_{n \le 0}$ with finite support $\{ n : \sigma(n) \ne 0 \}$.
With a vertex $x$ we can then associate the sequence $\sigma_x \in \Sigma_p$ of the 
labels on the geodesic $\pi(\varpi,x)$ coming down from $\varpi$. 
Given $\sigma \in \Sigma_p$ and $k \in \mathbb{Z}$, there is precisely one vertex $x$ on the 
horocycle $H_k$ such that $\sigma_x = \sigma$. In other words, we have a bijection 
$$
T \leftrightarrow \Sigma_p \times \mathbb{Z}\,,\quad\text{where}\quad 
x \mapsto (\sigma_x,k)
$$
For example, the vertex $x$ in Figure~2 corresponds to $(\sigma,k)$, where $k = 0$ and
$\sigma = (\dots, 0,0,0,1,1)$.
In the above identification, the predecessor vertex of any $(\sigma,k)$ is $(\sigma',k-1)$, where 
$\sigma'(n) = \sigma(n-1)$ for all $n \le 0$.  

\smallskip

Now let $(\eta,k) \in \mathbb{Z}_p \wr \mathbb{Z}$. We split $\eta$ at $k$ by defining
$\eta_k^- = \eta|_{(-\infty\,,\,k]}$  and $\eta_k^+=\eta|_{[k+1\,,\,\infty)}$,
both written as sequences over the non\dash positive integers which belong to $\Sigma_p\,$:
$$
\eta_k^- = \bigl(\eta(k+n)\bigr)_{n \le 0} \quad\text{and}\quad 
\eta_k^+ = \bigl(\eta(k+1-n)\bigr)_{n \le 0}\,.
$$
Then $x_1 = (\eta_k^-,k)$ and $x_2 = (\eta_k^+,-k)$ are vertices, one in each of the two 
copies of $\mathbb{T}_p$ that make up $\mathsf{DL}(p,p)$. This yields the correspondence
between (the vertex set of) $\mathsf{DL}(p,p)$ and the lamplighter group 
$\mathbb{Z}_p \wr \mathbb{Z}$. It is a rather straightforward exercise to work out that
under this identification, our group acts transitively and without fixed points on
$\mathsf{DL}(p,p)$ and that the action preserves the neighbourhood relation of
the graph. 
%
See \cite{W-lamp}, where this is explained in more detail.

For $k \in \mathbb{Z}$ and $\ell \in \mathbb{Z}_p\,$,
let $\delta_k^{\ell} \in \mathfrak{C}$ be the configuration with value $\ell$ 
at $k$ and $0$ elsewhere. Then we can subsume the preceding
explanations as follows.

\begin{pro}\label{pro:ll} The lamplighter group 
$\mathbb{Z}_p \wr \mathbb{Z}$ embeds as a discrete, co\dash compact 
subgroup into the group $\mathcal{A}(p,p)$ of Proposition \ref{pro:af}. 
The Diestel\dash Leader graph $\mathsf{DL}(p,p)$
is the Cayley graph of the lamplighter group 
with respect to the symmetric set of generators
$$
\{ (\delta_1^{\ell},1)\,,\;(\delta_0^{\ell},-1) : \ell \in \mathbb{Z}_q \}\,.
$$
\end{pro}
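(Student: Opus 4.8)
The plan is to show that the explicit bijection already described, namely
$$
\Psi \colon \mathbb{Z}_p \wr \mathbb{Z} \to \mathsf{DL}(p,p), \qquad (\eta,k) \mapsto \bigl((\eta_k^-,k),(\eta_k^+,-k)\bigr),
$$
intertwines right multiplication by the proposed generators with the neighbourhood relation of $\mathsf{DL}(p,p)$, and then to read off every asserted property from this one fact. First I would confirm that $\Psi$ is a bijection: given $(x_1,x_2)\in\mathsf{DL}(p,p)$ with $\mathfrak{h}(x_1)=k$ and $\mathfrak{h}(x_2)=-k$, write $x_1=(\sigma_1,k)$ and $x_2=(\sigma_2,-k)$ in the coordinates $T \leftrightarrow \Sigma_p\times\mathbb{Z}$; then $\eta$ is recovered uniquely by prescribing its values on $(-\infty,k]$ from $\sigma_1$ and on $[k+1,\infty)$ from $\sigma_2$, and finiteness of the supports of $\sigma_1,\sigma_2$ gives $\eta\in\mathfrak{C}$.

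The central computation is the edge correspondence. Starting from $(\eta,k)(\delta_1^{\ell},1)=(\eta+\delta_{k+1}^{\ell},\,k+1)$ and $(\eta,k)(\delta_0^{\ell},-1)=(\eta+\delta_k^{\ell},\,k-1)$, I would apply $\Psi$ to both sides and compare with the predecessor and successor relations, recalling that the predecessor of $(\sigma,m)$ is $(\sigma',m-1)$ with $\sigma'(n)=\sigma(n-1)$. A direct check of the shifted indices then shows that right multiplication by $(\delta_1^{\ell},1)$ moves the first coordinate to one of its $p$ successors (the new digit being $\eta(k+1)+\ell$, which ranges over all of $\mathbb{Z}_p$ as $\ell$ does) while moving the second coordinate to its predecessor; dually, $(\delta_0^{\ell},-1)$ moves the first coordinate to its predecessor and the second to one of its successors. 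Together these account for exactly the $2p$ neighbours of each vertex, so $\Psi^{-1}$ carries the edges of $\mathsf{DL}(p,p)$ precisely onto the pairs $\{(g,gs)\}$ with $s$ in the stated set $S$. A one-line inverse computation, $(\delta_1^{\ell},1)^{-1}=(\delta_0^{-\ell},-1)$, confirms that $S$ is symmetric. This identifies $\mathsf{DL}(p,p)$ with the Cayley graph $X(\mathbb{Z}_p\wr\mathbb{Z},S)$, and since $\mathsf{DL}(p,p)$ is connected, $S$ generates the group.

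It then remains to realise this as a discrete, co-compact embedding into $\mathcal{A}(p,p)$. Left multiplication gives a faithful, free, transitive action of $\Gamma=\mathbb{Z}_p\wr\mathbb{Z}$ on the vertex set which, being left multiplication on a Cayley graph, preserves adjacency, so $\Gamma\hookrightarrow\mathsf{Aut}(\mathsf{DL}(p,p))$. Because left multiplication by $(\eta_0,k_0)$ shifts the first-coordinate height $k$ to $k_0+k$, preserving rather than reversing orientation, the action never swaps the two trees; hence by Proposition \ref{pro:af} its image lies in $\mathcal{A}(p,p)$. Freeness of the action means that $\Gamma$ meets the compact open stabiliser $\mathsf{Aut}(X)_o$ only in the identity, whence $\Gamma$ is discrete; and since $\Gamma$ is already transitive on vertices, one has $\mathcal{A}(p,p)=\Gamma\cdot\mathcal{A}(p,p)_o$ with $\mathcal{A}(p,p)_o$ compact, so the quotient is compact. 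One could equally invoke Criterion \ref{crit:Cayley} directly for the Cayley-graph assertion, transitivity and triviality of stabilisers being exactly its hypotheses.

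The step I expect to be the genuine work is the edge correspondence: the splittings $\eta_k^{\pm}$, the translation $L_k$, and the index shift in the predecessor map all have to line up, and it is easy to be off by one in the indices or to mismatch which tree ascends and which descends. Everything else — bijectivity of $\Psi$, symmetry of $S$, and the discreteness and co-compactness at the end — is routine once that bookkeeping is pinned down.
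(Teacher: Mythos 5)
Your proposal is correct and takes essentially the same route as the paper: the paper sets up exactly your bijection $\Psi$ via the splittings $\eta_k^{\pm}$, declares the verification that the lamplighter group acts transitively, freely and adjacency\mbox{-}preservingly to be ``a rather straightforward exercise'' (deferring details to the cited reference), and then states the proposition together with the verbal description of the generators that matches your edge computation. Your write-up merely fills in that exercise -- the index bookkeeping for the edge correspondence, symmetry of $S$ via $(\delta_1^{\ell},1)^{-1}=(\delta_0^{-\ell},-1)$, membership in $\mathcal{A}(p,p)$ via Proposition (1.6), and the discreteness/co-compactness argument -- all consistently with the paper's conventions, so there is nothing to correct.
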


This means that the actions of the lamplighter that correspond to crossing 
an edge in $\mathsf{DL}(p,p)$ are: ``either make first a step to the right and
then switch the lamp at the arrival point to any of the possible states,
or else first switch the lamp at the departure point to any of the possible
states and the make a step to the left.'' 

\smallskip

Regarding treebolic space and the group $\mathcal{B}(p,p)$ of 
Theorem \ref{thm:isogroup}, we have the following.

\begin{pro}\label{pro:BS} For integer $p \ge 2$, 
the \emph{Baumslag\dash Solitar group} 
$$
\textit{BS}(p) = \left\{ \begin{pmatrix} p^m & k/p^l \\ 0 & 1 \end{pmatrix}
: k, l, m \in \mathbb{Z} \right\} = \langle a, b \mid a\,b=b^{p}\,a \rangle
$$
embeds as a discrete, co\dash compact 
subgroup into the group $\mathcal{B}(p,p)$. 
\end{pro}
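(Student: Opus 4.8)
The plan is to embed $\textit{BS}(p)$ by letting it act simultaneously by \emph{real} and by \emph{$p$-adic} affine maps, using the identification of the lower boundary $\partial^*\mathbb{T}_p$ with $\mathbb{Q}_p$ and the resulting affine action of $\mathsf{Aff}(\mathbb{T}_p)$ recalled in Section~\ref{sec:intro}. To a matrix $M=\bigl(\begin{smallmatrix}p^m & \beta\\0&1\end{smallmatrix}\bigr)$, with $\beta = k/p^l \in \mathbb{Z}[1/p]$, I assign the pair $\iota(M)=(g_1,g_2)$, where $g_2=M$ is the element $z\mapsto p^m z+\beta$ of $\mathsf{Aff}(\mathbb{H}_p)$ (so $n=m$, $b=\beta$ in the notation of \eqref{eq:modularAffRq}), and $g_1\in\mathsf{Aff}(\mathbb{T}_p)$ is the tree automorphism fixing $\varpi$ whose boundary action $\xi\mapsto p^m\xi+\beta$ on $\partial^*\mathbb{T}_p\cong\mathbb{Q}_p$ is the \emph{same} affine formula, now read $p$-adically. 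Since in both coordinates $\iota$ is just the tautological affine representation $\bigl(\begin{smallmatrix}\alpha&\beta\\0&1\end{smallmatrix}\bigr)\mapsto(\xi\mapsto\alpha\xi+\beta)$ — over $\mathbb{R}$ for $g_2$ and over $\mathbb{Q}_p$ for $g_1$ — matrix multiplication in $\textit{BS}(p)$ turns into composition in each factor, so $\iota$ is a homomorphism; and it is injective already on the $g_2$-component, as $g_2=\mathrm{id}$ forces $p^m=1$, $\beta=0$.

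Next I would check that $\iota$ lands in $\mathcal{B}(p,p)$, i.e.\ the defining relation $\log_p\Delta_{\mathsf{Aff}(\mathbb{T}_p)}(g_1)+\log_p\Delta_{\mathsf{Aff}(\mathbb{H}_p)}(g_2)=0$ of Theorem~\ref{thm:isogroup}. By \eqref{eq:modularAffRq} (with $q=p$) the second term is $-m$. For the first, $\Delta_{\mathsf{Aff}(\mathbb{T}_p)}(g_1)=p^{\Phi(g_1)}$, and the key small computation is $\Phi(g_1)=m$: the map $\xi\mapsto p^m\xi+\beta$ \emph{contracts} the $p$-adic metric by $|p^m|_p=p^{-m}$, hence pushes every vertex $m$ horocycles \emph{down} the tree, so by \eqref{eq:Phi} one gets $\mathfrak{h}(g_1x)-\mathfrak{h}(x)=m$. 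This is exactly the opposite-sign behaviour of the real and $p$-adic absolute values noted before Definition~\ref{def:treebolic}, and it makes the two logarithms cancel.

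Discreteness is the first half of the classical statement that the diagonal copy of $\mathbb{Z}[1/p]$ is a lattice in $\mathbb{R}\times\mathbb{Q}_p$. A small identity neighbourhood in $\mathcal{B}(p,p)\subset\mathsf{Aff}(\mathbb{T}_p)\times\mathsf{Aff}(\mathbb{H}_p)$ forces, on the Lie factor, the discrete exponent $n=m$ to vanish and $|\beta|$ to be small in $\mathbb{R}$; on the totally disconnected factor it forces $g_1$ to fix a large ball of $\mathbb{T}_p$, i.e.\ $\beta\in p^N\mathbb{Z}_p$. Since $p^N\mathbb{Z}_p\cap\mathbb{Z}[1/p]=p^N\mathbb{Z}$, choosing the real radius $<p^N$ yields $\beta=0$; hence $\iota(\textit{BS}(p))$ is discrete.

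The main work is co-compactness, and I would carry it out through the structure of $\mathcal{B}(p,p)$. Write, as in Theorem~\ref{thm:isogroup}, $\mathcal{B}(p,p)=\mathbb{R}\rtimes\mathsf{Aff}(\mathbb{T}_p)$, and use that $\mathsf{Aff}(\mathbb{T}_p)$ is, up to the \emph{compact} stabiliser $K$ of the pair $(\varpi,o)$, the $p$-adic affine group $\mathbb{Q}_p\rtimes p^{\mathbb{Z}}$. Dividing out the compact $K$, the group $\mathcal{B}(p,p)$ becomes $(\mathbb{R}\times\mathbb{Q}_p)\rtimes\mathbb{Z}$, with the generator of $\mathbb{Z}$ acting by multiplication by $p$ simultaneously on $\mathbb{R}$ and on $\mathbb{Q}_p$; this action preserves Haar measure because $p\cdot|p|_p=1$, in accordance with the unimodularity of $\mathcal{B}(p,p)$ at $p=q$. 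Under $\iota$, the decomposition $\textit{BS}(p)=\mathbb{Z}[1/p]\rtimes\mathbb{Z}$ (with $a$ acting on the normal subgroup $\mathbb{Z}[1/p]$ by $\beta\mapsto p\beta$, since $aba^{-1}=b^p$) maps onto this model with $\mathbb{Z}[1/p]$ embedded \emph{diagonally} in $\mathbb{R}\times\mathbb{Q}_p$ and the two copies of $\mathbb{Z}$ matched. Co-compactness then reduces to the classical fact that the diagonal $\mathbb{Z}[1/p]$ is a co-compact lattice in $\mathbb{R}\times\mathbb{Q}_p$ (the quotient being the compact solenoid $(\mathbb{R}\times\mathbb{Q}_p)/\mathbb{Z}[1/p]$), together with the triviality of the quotient on the top $\mathbb{Z}$ and the compactness of $K$. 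The step I expect to be the real obstacle is precisely checking that the ``extra'' directions of $\mathsf{Aff}(\mathbb{T}_p)$ beyond $\mathbb{Q}_p\rtimes p^{\mathbb{Z}}$ really are compact and harmless; as a cross-check I would verify co-compactness geometrically, noting that $\mathcal{B}(p,p)$ acts on $\mathsf{HT}(p,p)$ properly, with compact point stabilisers and quotient the circle of length $\log p$ (Theorem~\ref{thm:isogroup}), so that a discrete subgroup is co-compact in $\mathcal{B}(p,p)$ iff it acts co-compactly on $\mathsf{HT}(p,p)$; one then exhibits a single strip $\mathsf{S}_k$ together with the finitely many edge-copies above a fundamental $p$-adic ball as a compact set whose $\iota(\textit{BS}(p))$-translates cover $\mathsf{HT}(p,p)$.
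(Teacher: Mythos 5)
Your proposal is correct, but note that the paper itself offers no proof of Proposition \ref{pro:BS}: it explicitly omits the explanation and defers to \cite{FM1} and \cite[\S 2]{BSSW2}. What you have written is, in substance, exactly the argument of those references: realise $\textit{BS}(p)=\mathbb{Z}[1/p]\rtimes_p\mathbb{Z}$ inside $\mathcal{B}(p,p)$ by letting a matrix act on $\mathbb{H}$ by the real affine map and on $\mathbb{T}_p$ by the same affine map read $p$-adically via the identification $\partial^*\mathbb{T}_p\cong\mathbb{Q}_p$ recalled in Section \ref{sec:intro}; the computation $\Phi(g_1)=m$ against $\log_p\Delta_{\mathsf{Aff}(\mathbb{H}_p)}(g_2)=-m$ (the opposite behaviour of the real and $p$-adic absolute values of $p^m$) is precisely what places the image in the horocyclic product $\mathcal{B}(p,p)$; and discreteness plus co-compactness come down to the classical fact that the diagonally embedded $\mathbb{Z}[1/p]$ is a co-compact lattice in $\mathbb{R}\times\mathbb{Q}_p$. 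So you have filled in, by the intended route, precisely what the paper delegates to the literature.

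One point should be formulated more carefully: the stabiliser $K$ of $(\varpi,o)$ is compact but not normal in $\mathsf{Aff}(\mathbb{T}_p)$, so ``dividing out the compact $K$'' does not produce a quotient group $(\mathbb{R}\times\mathbb{Q}_p)\rtimes\mathbb{Z}$. The clean statement is: since $\mathbb{Q}_p\rtimes p^{\mathbb{Z}}$ already acts transitively on the vertices of $\mathbb{T}_p$, one has $\mathcal{B}(p,p)=\bigl(K\times\{\mathrm{id}\}\bigr)\cdot H$, where $H=\{(g_1,g_2)\in\mathcal{B}(p,p): g_1\in\mathbb{Q}_p\rtimes p^{\mathbb{Z}}\}$ is a subgroup isomorphic to $(\mathbb{R}\times\mathbb{Q}_p)\rtimes_p\mathbb{Z}$ and containing $\iota\bigl(\textit{BS}(p)\bigr)$. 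Co-compactness of $\iota\bigl(\textit{BS}(p)\bigr)$ in $H$ (your solenoid argument) then yields co-compactness in $\mathcal{B}(p,p)$, because it exhibits $\mathcal{B}(p,p)$ as $(\text{compact})\cdot(\text{compact})\cdot\iota\bigl(\textit{BS}(p)\bigr)$. With this repair the argument is complete; your geometric cross-check via the proper action on $\mathsf{HT}(p,p)$ with circle quotient (Theorem \ref{thm:isogroup}) is the Farb--Mosher viewpoint and is an equally valid way to finish.
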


We omit the explanation; see \cite{FM1}, and in more detail \&
closer to the spirit of the present survey, \cite[\S 2]{BSSW2}.

\smallskip

Finally, we consider the $\mathsf{Sol}$ case and exhibit discrete, 
co\dash compact subgroups of $\mathcal{S}(p,p)$. We include an explanation
because this is so obvious to the specialists that it is not too easy
to find in the relevant literature.

Let $A = \bigl(\begin{smallmatrix} a & b \\ c & d \end{smallmatrix}\bigr) 
\in \textit{SL}_2(\mathbb{Z})$, an integer matrix with determinant $1$. We require 
that it has trace $a+d > 2$. Thus, it has  
eigenvalues $\lambda = \lambda(A) > 1$ and $1/\lambda$. Then $A$ induces an 
action of $\mathbb{Z}$ on $\mathbb{Z}^2$, such that $m \in \mathbb{Z}$
acts by
$$
\Bigl( {k \atop l} \Bigr) \mapsto A^m \Bigl( {k \atop l} \Bigr).
$$
This gives rise to the semi\dash direct product group
\begin{equation}\label{eq:sd}
\mathbb{Z}^2 \rtimes_A \mathbb{Z}  = \left\{
\begin{pmatrix} \;A^m & {\displaystyle {k \atop l}}\\[9pt] 0\quad0 & 1 \end{pmatrix} :
\; k,l,m \in \mathbb{Z} \right\}.
\end{equation}
We can find a matrix 
$\bigl(\begin{smallmatrix} \alpha & \beta \\ \gamma & \delta \end{smallmatrix}\bigr) 
\in \textit{SL}_2(\mathbb{R})$ that diagonalises $A$,
that is, 
$$
A  \begin{pmatrix} \alpha & \beta \\ 
                    \gamma & \delta \end{pmatrix}
= 
 \begin{pmatrix} \alpha & \beta \\ 
                    \gamma & \delta \end{pmatrix}
\begin{pmatrix} \lambda & 0 \\ 0 & \lambda^{-1} \end{pmatrix}.
$$
If we move the lattice $\mathbb{Z}^2$ by that matrix, then we end up in 
$\mathcal{S}(p,p)$, where $p=\log \lambda$. Indeed, conjugating with 
the  matrix
$$
B = \begin{pmatrix} \alpha & 0 & \beta & \\
       \gamma & 0 & \delta \\ 0 & 1 & 0 \end{pmatrix},
$$
we compute
$$
B^{-1} \begin{pmatrix} \;A^m & {\displaystyle {k \atop l}}\\[9pt] 0\quad0 & 1 
\end{pmatrix} B = 
\begin{pmatrix} e^{pm} & \delta k -\beta l  & 0 \\ 0 & 1 & 0 \\
       0 & - \gamma k + \alpha l & e^{-pm} \end{pmatrix}
$$
Note that $\bigl(\begin{smallmatrix} \alpha & \beta \\ \gamma & \delta \end{smallmatrix}\bigr)$ 
and $p$ cannot be chosen independently. Again, we subsume.

\begin{pro}\label{pro:SOL} For any matrix $A \in \textit{SL}_2(\mathbb{Z})$ 
with trace $>2$, the group $\mathbb{Z}^2 \rtimes_A \mathbb{Z}$ of \eqref{eq:sd}
embeds isomorphically into $\mathcal{S}(p,p)$ as a discrete, co\dash compact subgroup,
where $p = \log \lambda\,$, and $\lambda$ is the eigenvalue of $A$ with $\lambda > 1$.

Thus, the group acts on $\mathsf{Sol}(p,p)$ with compact quotient.
\end{pro}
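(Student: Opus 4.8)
The preceding conjugation computation already produces a candidate map, so the plan is to read off the homomorphism, check injectivity, and then verify the two topological properties in the coordinates $(a,b,c)$ that identify $\mathcal{S}(p,p)$ with $\mathbb{R}^3$ as a manifold (Facts~\ref{fct:Solgroup}). First I would note that, since $A\in \textit{SL}_2(\mathbb{Z})$ has $\det A = 1$ and trace $>2$, its eigenvalues $\lambda>1$ and $1/\lambda$ are real, distinct and positive; hence $A$ is diagonalizable over $\mathbb{R}$, and after rescaling the eigenvectors one may take the diagonalizing matrix $\bigl(\begin{smallmatrix}\alpha&\beta\\\gamma&\delta\end{smallmatrix}\bigr)\in \textit{SL}_2(\mathbb{R})$, i.e.\ $\alpha\delta-\beta\gamma=1$. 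Conjugation by the invertible matrix $B$ is an injective homomorphism, and the displayed identity shows it sends the element of $\mathbb{Z}^2\rtimes_A\mathbb{Z}$ indexed by $(k,l,m)\in\mathbb{Z}^3$ to the element of $\mathcal{S}(p,p)$ with coordinates $(a,b,c)=(\delta k-\beta l,\,-\gamma k+\alpha l,\,m)$, where $p=\log\lambda$. Thus we obtain an injective homomorphism $\iota$ onto a subgroup $\Gamma\le\mathcal{S}(p,p)$, which furnishes the claimed abstract isomorphism.

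For discreteness I would observe that in these coordinates $\Gamma$ is exactly $M(\mathbb{Z}^3)$ with $M=\bigl(\begin{smallmatrix}\delta&-\beta&0\\-\gamma&\alpha&0\\0&0&1\end{smallmatrix}\bigr)$ and $\det M=\alpha\delta-\beta\gamma=1\neq 0$. Since $M$ is an invertible linear self-map of $\mathbb{R}^3$ it carries the standard lattice $\mathbb{Z}^3$ to a lattice, so $\Gamma$ is a discrete subset of $\mathbb{R}^3$, hence a discrete subgroup of $\mathcal{S}(p,p)$.

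For cocompactness it suffices to find a compact $K\subset\mathcal{S}(p,p)$ with $\Gamma K=\mathcal{S}(p,p)$. Write $\Lambda_0=\bigl(\begin{smallmatrix}\delta&-\beta\\-\gamma&\alpha\end{smallmatrix}\bigr)(\mathbb{Z}^2)$, a full-rank lattice in $\mathbb{R}^2$, and fix a bounded fundamental parallelogram $D_0$ for it; the point is that for each fixed $m\in\mathbb{Z}$ the elements of $\Gamma$ with third coordinate $m$ have their first two coordinates ranging over all of $\Lambda_0$. Given $g=(x,y,z)$, choose $m=-\lfloor z\rfloor$ so that $m+z\in[0,1)$; then, using the product $(v_1,v_2,m)\cdot(x,y,z)=(e^{pm}x+v_1,\,e^{-pm}y+v_2,\,m+z)$ from Facts~\ref{fct:Solgroup}, choose $(v_1,v_2)\in\Lambda_0$ with $(e^{pm}x+v_1,\,e^{-pm}y+v_2)\in D_0$. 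The resulting $\gamma=(v_1,v_2,m)\in\Gamma$ satisfies $\gamma g\in K$ for $K=\overline{D_0}\times[0,1]$, so $g=\gamma^{-1}(\gamma g)\in\Gamma K$; hence $\Gamma K=\mathcal{S}(p,p)$ and $\Gamma\backslash\mathcal{S}(p,p)$ is compact. Finally, since by Facts~\ref{fct:Solgroup} the group $\mathcal{S}(p,p)$ acts simply transitively on $\mathsf{Sol}(p,p)$, this is exactly the assertion that $\Gamma$ acts on $\mathsf{Sol}(p,p)$ with compact quotient. I expect the cocompactness step to be the only genuine obstacle: one must dovetail the vertical $\mathbb{Z}$ (the $c$-direction) with the planar lattice $\Lambda_0$ while the dilation part of the group rescales the $(a,b)$-plane by $e^{\pm pm}$ at level $m$, and the argument stays clean only because the whole of $\Lambda_0$ is available at every level, so the dilation is absorbed before the lattice translate is chosen.
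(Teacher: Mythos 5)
Your proposal is correct and takes essentially the same route as the paper: the embedding is exactly the conjugation by $B$ displayed before the proposition, which the paper then simply ``subsumes'' without spelling out the remaining verifications. Your explicit checks --- injectivity via $\det\bigl(\begin{smallmatrix}\delta&-\beta\\-\gamma&\alpha\end{smallmatrix}\bigr)=1$, discreteness via the lattice description $M(\mathbb{Z}^3)$, and cocompactness via the fundamental set $\overline{D_0}\times[0,1]$ --- are precisely the routine details the paper leaves implicit, and they are carried out correctly.
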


\section{Further developments} \label{sec:else}

My own interest focusses on issues like random walks on graphs and groups, the associated
harmonic functions and the spectral theory of the corresponding transition operators,
resp.\ adjacency matrices. In case of non\dash discrete structures, it is natural to replace
random walks with variants of Brownian motion. What makes me most happy is when I
can use a good understanding of the geometry of the given structure to derive results
in this direction. 

Lamplighter groups have been of increasing interest
in the context of random walks since their first appearance in this field of research
in the seminal paper by {\sc Kaimanovich and Vershik}~\cite{KV}. Currently, insertion of
the word ``lamplighter'' in {\it MathSciNet} yields a response of 44 articles.

Realising the classical lamplighter groups $\mathbb{Z}_p \wr \mathbb{Z}$ in terms of
DL graphs enhanced the interest to study random walks on $\mathsf{DL}(p,q)$ for arbitrary
integers $p, q \ge 2$. The asymptotics in space and time of random walks on
$\mathsf{DL}(p,q)$ were first studied by {\sc Bertacchi}~\cite{Ber}. 

Regarding random walk on $\mathbb{Z}_p \wr \mathbb{Z}$ -- corresponding
to simple random walk on $\mathsf{DL}(p,p)$ -- without using the $\mathsf{DL}$ 
description, {\sc Grigorchuk and \.Zuk}~\cite{GZ}
were the first to show that the spectrum is pure point, when $p=2$, then generalised to
arbitrary $p$ by {\sc Dicks and Schick}~\cite{DS}. While pure point spectrum (that is,
the given self\dash adjoint operator admits a complete orthonormal system of -- typically
finitely supported -- eigenfunctions) is familar in the context of fractals,
this was the first example of this type regarding an infinite, finitely generated group.
Using the horocyclic product structure, {\sc Bartholdi and Woess}~\cite{BaWo} 
provide a direct, explicit construction of the spectrum of $\mathcal{A}$-invariant
nearest neighbour random walk on arbitrary $\mathsf{DL}(p,q)$. It is again pure point,
and it can be used to determine the exact asymptotics of return probabilities (see also
{\sc Revelle}~\cite{Rev} for these asmyptotics on the lamplighter group).

The other issue that could be treated in a rather complete way by using the horoycyclic
product geometry concerns positive harmonic functions and the Martin boundary,
see {\sc Brofferio and Woess}~\cite{W-lamp}, \cite{BW1}, \cite{BW2}. 

A very similar approach, though comprising several different technical details,
applies to Brownian motion on the two sister structures, $\mathsf{Sol}$ and
$\mathsf{HT}$ -- with increasing level of difficulty. For those two, spectrum as well
as Martin boundary are not yet determined rigorously, although the $\mathsf{DL}$
case of \cite{BW1} leads to very clear ideas how the Martin compactification should
look like: in the drift\dash free case it should be the respective geometric compactification,
as described in \S \ref{sec:horo}, while otherwise it should be its refinement in
terms of horo\dash levels; compare with \cite{BW1}. One should also mention here the
recent work on the harmonic measure of discrete time random walks on $\mathsf{Sol}(1,1)$
by {\sc Brieussel and Tanaka}~\cite{BrT}. 

While $\mathsf{Sol}$ has a smooth structure, treebolic space has singularities along all
the bifurcation lines. This makes the rigorous construction of a Laplace operator
(with vertical drift term) and the associated Laplace operator considerably harder,
see \cite{BSSW1}. Once this is achieved, still with some additional difficulties in view of
the spatial singularities, one can proceed in a similar spirit as for random walk on
$\mathsf{DL}$-graphs. The results on $\mathsf{HT}(p,q)$
concern once more rate of escape, central limit theorem,
convergence to the boundary and positive harmonic functions. See \cite{BSSW2}, 
\cite{BSSW3}.

One common feature in all three cases is that every positive harmonic function $f$ for
the respective transition, resp.\ Laplace operator decomposes as
$$
f(x_1\,,x_2) = f_1(x_1) + f_2(x_2)\,,
$$ 
where $x_1$ and $x_2$ are the ``coordinates'' 
(with $\mathfrak{h}(x_1) \pm \mathfrak{h}(x_2) = 0$)
in the two factors of the horocyclic product, and each $f_i$ is a non\dash negative 
harmonic function for the projection of the respective
operator on the respective factor in that product.

\smallskip

Of course, there are more general types of horocyclic, resp.\ horospherical 
products than the tree sister structures of \S \ref{sec:horo}. One is
the horocyclic product of more than $2$ trees, 
$$
\mathsf{DL}(p_1, \dots, p_d) 
= \{ (x_1, \dots, x_d) \in \mathbb{T}_{p_1}\times \dots \times \mathbb{T}_{p_d} :
\mathfrak{h}(x_1) + \dots +\mathfrak{h}(x_d) =0 \}\,,
$$
equipped with a suitable neighbourhood relation. Automorphism group,
spectrum, Poisson boundary and other issues have been studied by
{\sc Bartholdi, Neuhauser and Woess}~\cite{BNW}. Again, the spectrum is pure point, and 
again, $\mathsf{DL}(p_1, \dots, p_d)$
is not a Cayley graph when the $p_i$ do not coincide. For three trees,
$\mathsf{DL}(p,p,p)$ is a Cayley graph of a finitely presented lamplighter\dash like group 
that has also been
studied by {\sc Cleary and Riley}~\cite{CR}. (The dead\dash end property studied in that 
paper and its predecessor by {\sc Cleary and Taback}~\cite{CT} becomes immediately
clear when one realises these groups in terms of $\mathsf{DL}$ graphs.)
For $d \ge 4$ factors, in \cite{BNW} a large number of cases is determined where 
$\mathsf{DL}(p, \dots, p)$ \emph{is} a Cayley graph.
The smallest case when this is not known is $\mathsf{DL}(2,2,2,2)$, while
$\mathsf{DL}(p,p,p,p)$ is shown to be a Cayley graph for all odd $p \ge 3$.

Given a tree with degree $p+q$, where $p, q \ge 2$, one can draw it such that
every vertex has $p$ predecessors and $q$ successors. It also has a natural
level function $\mathfrak{h}$, which in reality is not the Busemann function with
respect to some boundary point. One can then consider the horocyclic product
with ``sliced'' hyperbolic plane $\mathbb{H}_r$ (where $1 < r \in \mathbb{R}$) to obtain
a version of treebolic space where the strips ramify in both vertical directions. 
When $p$ and $q$ are relatively prime and $r$ is chosen appropriately,
the \emph{non\dash amenable Baumslag\dash Solitar group} $\langle a, b \mid a\,b^{q}=b^{p}\,a \rangle$
acts on that horocyclic product as a discrete isometry group and with compact
quotient. This fact is used by {\sc Cuno and Sava}~\cite{CS} in order to determine
the Poisson boundary of random walks on that group.

\smallskip

Finally, {\sc Kaimanovich and Sobieczky}~\cite{KS1}, \cite{KS2} have constructed
horocyclic products of random trees and studied random walks in the resulting
random environment.

\smallskip

Many further interesting classes of horocyclic, resp.\ horospherical products
are at hand and waiting for future exploration. In conclusion, let me come back
to a new formulation of the question posed at the beginning, apparently still
open:

\begin{quote}
\emph{Is there a (connected, locally finite, infinite) vertex\dash transitive graph 
\emph{with unimodular automorphism group} that is 
not quasi\dash isometric with some Cayley graph?}
\end{quote}

\let\otb=\thebibliography
\def\thebibliography#1{\otb{#1}\small\itemsep-\parsep}
   
\bibliographystyle{amsplain}
\newpage
\bibliography{woess-imn-ref}

\vfill
{\it Author's address:
Institut f\"ur Mathematische Strukturtheorie (Math C),\\
Tech\-ni\-sche Universit\"at Graz,
Steyrergasse 30, A-8010 Graz.\\
{\email woess@tugraz.at}.}

\Artikelende

\ifalleinez \end{document}\fi